\documentclass[12pt]{article}
\usepackage{setspace}
\usepackage{amsmath,amssymb}
\usepackage{mathtools}
\usepackage{mathrsfs}
\usepackage{geometry}
\usepackage{enumitem}
\usepackage{environ}
\usepackage{setspace}
\usepackage{titlesec}
\usepackage[all,cmtip]{xy}
\usepackage{tikz}
\usetikzlibrary{matrix,arrows}

\usepackage[colorlinks=true]{hyperref}
\usepackage[amsthm,thmmarks]{ntheorem}

\NewEnviron{prf}[1][]{\begin{proof}[\bf #1Proof]\BODY\end{proof}}{}
\NewEnviron{slt}[1][]{\begin{proof}[\bf #1	解]\BODY\end{proof}}{}
\newtheorem{definition}{Definition}[section]
\newtheorem{theorem}[definition]{Theorem}
\newtheorem{lemma}[definition]{Lemma}
\newtheorem{proposition}[definition]{Proposition}
\newtheorem{remark}[definition]{Remark}
\newtheorem{example}[definition]{Example}
\newtheorem{corollary}[definition]{Corollary}

\newtheorem{question}[definition]{Question}
\newtheorem{setting}[definition]{Setting}
\newtheorem{condition}[definition]{Condition}
\newtheorem{notation}[definition]{Notation}
\newtheorem{claim}[definition]{Claim}

\newtheorem*{DML}{Dynamical Mordell--Lang Conjecture (DML Conjecture)}

\setlist[enumerate,1]{label=(\roman*)}
\setlist[enumerate,2]{label=(\alph*)}

\title{\textbf{Local height arguments toward the dynamical Mordell--Lang conjecture}}
\author{She Yang\quad and\quad Aoyang Zheng}
\date{}

\begin{document}
\begin{spacing}{1.25}

\maketitle

\begin{abstract}
We consider regular endomorphisms of the complex affine space with a degree gap $k$. They are endomorphisms $f$ of $\mathbb{A}_{\mathbb{C}}^{N}$ of the form $f(x_1,\dots,x_N)=(f_1(x_1,\dots,x_N)+g_1(x_1,\dots,x_N),\dots,\\f_N(x_1,\dots,x_N)+g_N(x_1,\dots,x_N))$, in which $f_1,\dots,f_N$ are homogeneous polynomials of degree $d$ with no nonzero common zeros and $g_1,\dots,g_N$ are polynomials of degree $\leq d-k$. Such an endomorphism will extend to an endomorphism of $\mathbb{P}_{\mathbb{C}}^{N}$. Let $H_{\infty}=\mathbb{P}_{\mathbb{C}}^{N}\setminus\mathbb{A}_{\mathbb{C}}^{N}$ be the infinity hyperplane and we denote $f_{\infty}$ as the induced endomorphism of $H_{\infty}$. Suppose that $k$ is twice greater than the multiplicities of $f_{\infty}$ on the periodic closed points, i.e. $k>2\max\limits_{P\in\mathrm{Per}(f_\infty)}e_{f_{\infty}}(P)$. Then we prove that $f$ satisfies the dynamical Mordell--Lang conjecture for curves. As a by-product of our proof, we show that in this case every periodic curve of $f$ is a ``vertical line", i.e. a straight line passing through the origin.

There are many examples which satisfy our condition $k>2\max\limits_{P\in\mathrm{Per}(f_\infty)}e_{f_{\infty}}(P)$. Indeed, we prove that for every $d\geq2$, a general endomorphism $f_{\infty}$ of $H_{\infty}\cong\mathbb{P}_{\mathbb{C}}^{N-1}$ of degree $d$ satisfies $\max\limits_{P\in H_{\infty}(\mathbb{C})}e_{f_{\infty}}(P)\leq(N-1)!\cdot2^{N-1}$. So if we take $k=(N-1)!\cdot2^N+1$, then $f$ will satisfy our condition if $f_{\infty}$ is general (of an arbitrary degree $d\geq k$). Moreover, we provide examples to illustrate that this condition is optimal to force every periodic curve to be a vertical line, in the sense that one cannot change ``$>$" into ``$\geq$".
\end{abstract}

\section{Introduction}

In this article, the base field is $\mathbb{C}$ unless otherwise stated. As a matter of convention, every variety is assumed to be integral, but the closed subvarieties can be reducible. Let $X$ be a variety and let $f$ be an endomorphism of $X$. For a point $x\in X(K)$, the orbit $\mathcal{O}_{f}(x)$ is the set $\{f^{n}(x)|\ n\in\mathbb{N}\}$. We denote $\mathbb{N}=\mathbb{Z}_{+}\cup\{0\}$. An arithmetic progression is a set of the form $\{mk+l|\ k\in\mathbb{N}\}$ for some $m,l\in\mathbb{N}$.

The dynamical Mordell--Lang conjecture is one of the core problems in the field of arithmetic dynamics. It was proposed by Ghioca and Tucker in \cite{GT09} and can be stated as follows.

\begin{DML}
Let $f$ be an endomorphism of a quasi-projective variety $X$. Let $V$ be a closed subvariety of $X$ and let $x\in X(\mathbb{C})$ be a point. Then the return set $\{n\in\mathbb{N}|\ f^n(x)\in V(\mathbb{C})\}$ is a finite union of arithmetic progressions.
\end{DML}

There is an extensive literature on various cases of the DML conjecture. Two significant cases are as follows.

\begin{enumerate}
\item
If $f$ is an \'etale endomorphism of $X$, then the DML conjecture holds. See \cite{Bel06} and \cite[Theorem 1.3]{BGT10}.
\item
If $X = \mathbb{A}^2$, then the DML conjecture holds. See \cite{Xie17} and \cite[Theorem 3.2]{Xie}.
\end{enumerate}

One can consult \cite{BGT16,Xie} and the references therein for further known results.

In this paper, we mainly study the DML conjecture for curves. So we recall the following notion introduced in \cite[Definition 1.3]{Xie}.

\begin{definition}
For a quasi-projective variety $X$ and an endomorphism $f$ of $X$, we say $(X,f)$ satisfies the \emph{DML(1) property}, if for any closed subcurve $C\subseteq X$ and any point $x\in X(\mathbb{C})$, the return set $\{n\in \mathbb{N}|\ f^n(x)\in C(\mathbb{C})\}$ is a finite union of arithmetic progressions.
\end{definition}

Here ``1" stands for the dimension of the closed subvariety.

\subsection{Main results}

We will investigate \emph{regular endomorphisms} of $\mathbb{A}^N$ in this paper. We firstly introduce some notions and settings, which will be used throughout the article.

\begin{setting}\label{setting}
A \emph{regular endomorphism} of $\mathbb{A}^N$ is a dominant endomorphism $f$ that can be extended to an endomorphism of $\mathbb{P}^N$. Here, we fix the standard coordinate system $[x_0,\dots,x_N]$ on $\mathbb{P}^N$ and naturally regard $\mathbb{A}^N$ as the open subset $\{x_0\neq0\}$ in it. Therefore, the regular endomorphism $f$ can be written as
$$
f(x_1,\dots,x_N)=(f_1(x_1,\dots,x_N)+g_1(x_1,\dots,x_N),\dots,f_N(x_1,\dots,x_N)+g_N(x_1,\dots,x_N)),
$$
in which $f_1(x_1,\dots,x_N),\dots,f_N(x_1,\dots,x_N)$ are homogeneous polynomials of degree $d\geq1$ that have no nonzero common zeros and $g_1(x_1,\dots,x_N),\dots,g_N(x_1,\dots,x_N)$ are polynomials of degree $<d$.

For $1\leq k\leq d$, we say $f$ has \emph{degree gap} $k$ if the degrees of $g_1(x_1,\dots,x_N),\dots,g_N(x_1,\dots,x_N)$ are not greater than $d-k$.

We denote $H_{\infty}$ as the infinity hyperplane $\{x_0=0\}$ in $\mathbb{P}^N$. We sometimes regard $f$ as an endomorphism of $\mathbb{P}^N$ by abusing notation, and we denote $f_{\infty}=f|_{H_{\infty}}$.

We write $O$ as the origin $(0,\dots,0)\in\mathbb{A}^N$ and let $\pi:\mathbb{P}^N\setminus\{O\}\rightarrow H_{\infty}$ be the central projection. A \emph{vertical line} in $\mathbb{A}^N$ (or $\mathbb{P}^N$) means a straight line passing through $O$. Here ``vertical" means vertical to $H_{\infty}$.

For an irreducible closed subcurve $C\subseteq\mathbb{A}^N$, we denote $\overline{C}$ as its closure in $\mathbb{P}^N$.
\end{setting}

As a natural setting of algebraic dynamics, the regular endomorphisms have been studied in several previous works. For example, please see \cite[Subsections 1.4--1.5]{Xie24}, \cite{DFR}, \cite[Subsection 1.2]{Zhong}, and \cite[Subsection 1.2]{JXZ}.

Now we state our first main theorem regarding curves which have an infinite intersection with an orbit of a regular endomorphism. Throughout this paper, for a finite morphism $f:X\rightarrow Y$ between schemes and a point $x\in X$, we let the \emph{multiplicity} of $f$ at $x$ be the length of the Artin local ring $\mathcal{O}_{X,x}/f^*\mathfrak{m}_{f(x)}\mathcal{O}_{X,x}$ and denote this quantity as $e_f(x)$. Here $\mathfrak{m}_{f(x)}$ is the maximal ideal of $\mathcal{O}_{Y,f(x)}$.

\begin{theorem}\label{main}
Let $f$ be a regular endomorphism of $\mathbb{A}^N$ which has degree gap $k$. Let $C\subseteq\mathbb{A}^N$ be an irreducible closed subcurve and let $\overline{C}$ be its closure in $\mathbb{P}^N$. Suppose that there is a point $x\in\mathbb{A}^N(\mathbb{C})$ such that the orbit $\mathcal{O}_f(x)$ has an infinite intersection with $C$. Let $I=\overline{C}\cap H_{\infty}$.
\begin{enumerate}
\item
We have $1\leq|I|\leq2$, and the elements of $I$ are $f_{\infty}$-periodic closed points.

\item
Let $I_0$ be the finite set $\bigcup\limits_{P\in I}\mathcal{O}_{f_{\infty}}(P)$. Suppose $k>2e_{f_{\infty}}(P)$ holds for every $P\in I_0$. Then $C$ is an $f$-periodic vertical line.
\end{enumerate}
\end{theorem}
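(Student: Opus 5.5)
We will argue with local heights (Green functions) at the points of $\overline{C}\cap H_\infty$, following the title's ``local height arguments''. Write $x_n=f^n(x)$ and let $S=\{n: x_n\in C\}$, which is infinite by assumption.

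\textbf{Setting up part (i).} Points of $\mathcal{O}_f(x)$ can accumulate in $C$ only at infinity, unless $x$ is preperiodic. If $x$ is preperiodic, the orbit is finite and cannot meet $C$ infinitely often. So we may assume the $x_n$ escape. For a regular endomorphism, escaping points satisfy $\|x_{n+1}\|\asymp\|x_n\|^d$, and the Green function $G(y)=\lim d^{-n}\log^+\|f^n(y)\|$ is positive along the orbit.

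\textbf{Proof of part (i).} Parametrize the branches of $\overline C$ at infinity by Puiseux series. Each point of $I$ corresponds to at least one place of $C$ at infinity, so $|I|\ge1$. Take the $x_n$ with $n\in S$: passing to a subsequence, they tend to a branch at some $P\in I$. We then apply $f$ and compare directions. The direction $\pi(x_{n+m})$ is approximately $f_\infty^m(\pi(x_n))$, with error $O(\|x_n\|^{-k})$ coming from the degree gap. This forces the limit directions along $S$ to be permuted by $f_\infty$. Using a Skolem--Mahler--Lech type $p$-adic argument, or simply finiteness of places, we get that each point of $I$ hit infinitely often is $f_\infty$-periodic. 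For the bound $|I|\le2$ and the claim that every point of $I$ is hit, the plan is Xie's argument in the style of the affine Siegel theorem. Consider the function $\log\|\cdot\|$ restricted to $C$. The orbit points on $C$ have heights growing like $d^n$, so they are very sparse, and the curve must admit such an integral-like sequence. Siegel-type reasoning then forces the normalization $\tilde C$ minus its places at infinity to have at most two punctures. Finally, to show that every point of $I$ is $f_\infty$-periodic, we use that the orbit, viewed from the different branches, must return near each place; we deduce this via the height-comparison trick along $C$.

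\textbf{Proof of part (ii).} Let $P\in I$ with period $p$. Replacing $f$ by $f^p$ preserves the degree gap, and the multiplicities get multiplied along the orbit. Here we use the hypothesis over all of $I_0$ to control $e_{f^p_\infty}(P)$ only through local analysis at each step. Work in local coordinates at $P$ on $H_\infty$, with the transverse coordinate $t=1/\|y\|$. Near $H_\infty$, $f$ sends $(z,t)\mapsto(f_\infty(z)+O(t^k),\,t^d\cdot u)$. The branch of $\overline C$ at $P$ is $z=\phi(t^{1/r})$. The orbit points on $C$ near $P$ satisfy $\mathrm{dist}(z_n,\text{branch})=0$. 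Applying $f$ once moves the branch to $f(\overline C)$, and comparing $\overline C$ with its image shows that $\pi(x_n)$ approaches $P$ at rate $|t_n|^{c}$, where $c$ is the contact order. On the other hand, local injectivity of $f_\infty$ up to multiplicity $e=e_{f_\infty}(P)$ gives $\mathrm{dist}(f_\infty(z),f_\infty(z'))\gtrsim\mathrm{dist}(z,z')^{e}$, since the ramification order is at most the multiplicity. Combining these, the $O(t^k)$ perturbation can be absorbed only if the branch is the constant $z\equiv P$, i.e. $\overline C$ is the vertical line through $P$. Quantitatively, a nonconstant Puiseux term $t^{a}$ gets sent to at best order $t^{da e}$ with a perturbation of $t^{dk}$-size. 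Iterating twice, which is where the factor $2$ in $k>2e$ comes from, produces the contradiction that $C$ and $f(C)$ must both contain infinitely many orbit points yet separate.

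\textbf{Conclusion and main obstacle.} Once $C$ is vertical through $P$, the orbit points on $C$ and the periodicity of $P$ give that $f^{p}(C)$ is a curve meeting $C$ infinitely often, hence $f^p(C)=C$. The main obstacle is the exponent bookkeeping in this last local comparison, which is where the constant $2e$ must emerge; I would first try it in the case $N=2$, $e=1$.
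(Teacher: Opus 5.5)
Your outline (Siegel for $|I|\le 2$, then comparing how fast $\pi(y)$ approaches a point of $I$ with $\log\|y\|$) points in the right direction, but the steps that carry the proof are missing.

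\textbf{Part (i).} You cannot reduce to an escaping orbit in $\mathbb{C}^N$. For instance, the orbit of $(e^{2\pi i\theta},1)$, $\theta\notin\mathbb{Q}$, under $(x_1,x_2)\mapsto(x_1^2,x_2^2)$ is infinite, bounded, and lies on the curve $\{x_2=1\}$. At the given complex absolute value the Green function vanishes on it, and nothing happens near $H_\infty$. This is why the paper descends to a finitely generated field $K$, takes $S$ so that $\mathcal{O}_f(x)$ is $S$-integral, and integrates local heights over $S$. Siegel applies simply because $\mathcal{O}_f(x)\subseteq R^N$ for a finitely generated ring $R$; sparseness plays no role.

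More seriously, your periodicity argument does not work. Knowing that $\pi(f^n(x))$ is close to $P\in I$ for $n$ in an uncontrolled return set, and that $\pi(f^{n+m}(x))\approx f_\infty^m(\pi(f^n(x)))$, never yields $f_\infty^m(P)=P$. The missing idea is the backward chain of Lemma \ref{4.1}:
\begin{enumerate}
\item Pick an irreducible component $C_{-1}$ of $f^{-1}(C)$ meeting $\mathcal{O}_f(x)$ infinitely often, and iterate. Then $f_\infty$ maps $\overline{C_{-n-1}}\cap H_\infty$ onto $\overline{C_{-n}}\cap H_\infty$.
\item Siegel applied to each $C_{-n}$ gives a backward $f_\infty$-orbit of $P_0$ consisting of points of degree at most $2$ over $K$.
\item Since $f_\infty$ is polarized, these points have bounded height, so the backward orbit is finite by Northcott and $P_0$ is periodic. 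The case $d=1$ is handled separately.
\end{enumerate}

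\textbf{Part (ii).} Two things go wrong.

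(a) You compare $C$ with $f(C)$, but pushing forward, the degree-gap error only yields an exponent estimate of at most $k/d\le 1$ for $f(C)$. The gain happens backwards, along the chain $C_{-n}$. There, the error $O(\|y\|^{-k})$ in $\pi\circ f\approx f_\infty\circ\pi$ combines with $\mathfrak{m}_P^{e}\subseteq f_\infty^*\mathfrak{m}_{f_\infty(P)}$. This inclusion bounds $\mathrm{dist}(f_\infty(z),f_\infty(P))$ from below by $\mathrm{dist}(z,P)^e$ only with base point $P$; your two-point version is false when $e\ge 2$. Together they upgrade an exponent $r_n$ for $C_{-n}$ at $P_{-n}$ to $r_{n+1}=\min\{k,dr_n\}/e_{f_\infty}(P_{-n-1})$ for $C_{-n-1}$ (Proposition \ref{3.15}). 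This is where the hypothesis on all of $I_0$ enters, and it gives $r_n>2$ for $n\gg0$ (Lemma \ref{4.3}).

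(b) No analysis at a single branch can force that branch to be vertical. The dynamics only forces the exponent to be at least $k/e$, and branches of non-vertical curves can do that: $\{x_1^mx_2=1\}$ approaches $[0,1,0]$ with exponent $m+1$. One needs a global statement: a curve that is more than $2$-vertical at \emph{every} point of $\overline{C}\cap H_\infty$ and has infinitely many $S$-integral points is a vertical line. The paper proves this in Proposition \ref{4.5}. It sums over all points at infinity (Proposition \ref{3.13}, Claim \ref{4.6}), integrates over $S$, and applies Roth's theorem. The $2$ in $k>2e$ is Roth's exponent, not the effect of iterating twice.

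In your geometric language a degree count would also do. Suppose $C$ is not a vertical line and $y$ is a generic ratio of linear forms on $H_\infty$. Summing over the places $Q$ of $\overline{C}$ at infinity, with $n_Q$ the pole order there and $P_Q$ the image of $Q$, one has $\sum_Q\mathrm{ord}_Q(y\circ\pi-y(P_Q))\le|I|\deg(y\circ\pi)\le|I|\sum_Q n_Q$. Hence some branch has exponent at most $|I|\le 2$. This global input is absent from your sketch.

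Finally, your step ``$f^p(C)$ meets $C$ infinitely often'' is unjustified. The paper instead uses that the $C_{-n}$ for $n\gg0$ are the vertical lines through the periodic points $P_{-n}$. They therefore cycle, which forces $C=C_{-jp}$ for large $j$.
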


The following corollary is an immediate consequence of Theorem \ref{main}, which provides an explicit example.

\begin{corollary}\label{cor main}
Let $d>1$ and let $f(x_1,\dots,x_N)=(x_1^d+g_1(x_1,\dots,x_N),\dots,x_N^d+g_N(x_1,\dots,x_N))$ be a regular endomorphism of $\mathbb{A}^N$, in which the degrees of $g_1(x_1,\dots,x_N),\dots,g_N(x_1,\dots,x_N)$ are not greater than $d-3$. Let $C\subseteq\mathbb{A}^N$ be an irreducible closed subcurve. Suppose that there is a point $x\in\mathbb{A}^N(\mathbb{C})$ such that the orbit $\mathcal{O}_f(x)$ has an infinite intersection with $C$, and $\overline{C}\cap H_{\infty}$ is contained in the open set $\{x_1\cdots x_N\neq0\}$. Then $C$ is an $f$-periodic vertical line. In particular, every irreducible $f$-periodic curve $C$ which satisfies $\overline{C}\cap H_{\infty}\subseteq\{x_1\cdots x_N\neq0\}$ is a vertical line.
\end{corollary}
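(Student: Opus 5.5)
The plan is to deduce Corollary \ref{cor main} directly from Theorem \ref{main}(ii) with $k=3$. We only need to check two things: that $f$ has degree gap $3$, and that every point of $I_0$ has multiplicity $1$ under $f_\infty$. Then $k=3>2=2e_{f_\infty}(P)$ for all $P\in I_0$.

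\textbf{Degree gap.} Here $f_i=x_i^d$. These have no nonzero common zero, so $f$ is a regular endomorphism. The hypothesis $\deg g_i\le d-3$ says exactly that $f$ has degree gap $3$ when $d\ge 3$. If $d=2$, the hypothesis forces $g_i=0$, so $f$ is homogeneous. The formal condition $k\le d$ then fails, but the proof of Theorem \ref{main} only uses the bound $\deg g_i\le d-k$, which holds for every $k$ when $g_i=0$. I would point this out explicitly, or else treat this degenerate homogeneous case separately.

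\textbf{Multiplicity computation.} On $H_\infty\cong\mathbb{P}^{N-1}$, with coordinates $[x_1:\dots:x_N]$, we have $f_\infty=[x_1^d:\dots:x_N^d]$. Let $T=\{x_1\cdots x_N\neq0\}\subseteq H_\infty$. Then $f_\infty(T)\subseteq T$, so $f_\infty$-orbits of points of $T$ stay in $T$. By hypothesis $I=\overline{C}\cap H_\infty\subseteq T$, hence $I_0\subseteq T$. In the affine chart $x_1=1$, the map $f_\infty$ is $(y_2,\dots,y_N)\mapsto(y_2^d,\dots,y_N^d)$. Its Jacobian determinant is $d^{N-1}\prod y_i^{d-1}$, which is nonzero on $T$. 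So $f_\infty$ is étale at every $P\in T$. Thus $f_\infty^*\mathfrak{m}_{f_\infty(P)}$ generates $\mathfrak{m}_P$, and $e_{f_\infty}(P)=1$. Theorem \ref{main}(ii) now gives that $C$ is an $f$-periodic vertical line.

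\textbf{The ``in particular'' statement.} Let $C$ be irreducible and $f$-periodic, say $f^m(C)=C$, with $\overline{C}\cap H_\infty\subseteq T$. We need a point $x\in C(\mathbb{C})$ whose orbit meets $C$ in infinitely many points. For $x\in C$ we have $f^{mn}(x)\in C$ for all $n$, so it suffices to choose $x$ not $f$-preperiodic.

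The main point requiring an argument is that such $x$ exists. Since $f$ extends to an endomorphism of $\mathbb{P}^N$ of degree $d\ge2$, for each $b\ge1$ the set of points of exact period $b$ is finite (standard for polarized endomorphisms). A preperiodic point satisfies $f^a(x)\in\mathrm{Per}_b$ for some $a,b$, and each fibre of the finite map $f^a$ is finite. Hence the set of preperiodic points is countable, while $C(\mathbb{C})$ is uncountable. Picking a non-preperiodic $x\in C(\mathbb{C})$, the points $f^{mn}(x)$ are pairwise distinct and lie in $C$. The first part of the corollary then shows $C$ is a vertical line.
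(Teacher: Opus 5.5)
Your proof follows the paper's. For $d\ge 3$ you apply Theorem \ref{main}(ii) with $k=3$. You use that $f_\infty=[x_1^d:\dots:x_N^d]$ preserves the torus $\{x_1\cdots x_N\neq0\}$ and is \'etale there, so $e_{f_\infty}=1$ on $I_0$. The ``in particular'' part then follows from the existence of non-preperiodic points on $C$, exactly as in the paper.

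The one place you diverge is $d=2$, and your justification there needs correcting. It is not true that the proof of Theorem \ref{main} only uses $\deg g_i\le d-k$. Lemma \ref{4.3} uses $d\ge k$ to get the dichotomy ``$r_{1,n}\ge r$ or $r_{1,n}\ge r\,r_{1,n-1}$''. Your conclusion still survives: with $d=2$ and $e_{f_\infty}=1$ the recursion is $r_{1,n}=\min\{3,2r_{1,n-1}\}$, which still stabilizes at $3>2$. But you only reach this by re-opening that lemma.

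The paper avoids this by passing to the iterate $f^2=(x_1^4,\dots,x_N^4)$. This map genuinely has degree $4$ and degree gap $3$, and $e_{f^2_\infty}=1$ on the torus. One of $x$ or $f(x)$ has an $f^2$-orbit meeting $C$ infinitely often. Theorem \ref{main} then makes $C$ an $f^2$-periodic, hence $f$-periodic, vertical line.
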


We will provide examples to show that the requirements in Theorem \ref{main} and Corollary \ref{cor main} about the degree gap are optimal to force the periodic curves to be vertical. See Example \ref{example optimal}(i).

We use Theorem \ref{main} to study the dynamical Mordell--Lang conjecture for curves. According to the statement of Theorem \ref{main}, we would like to address the following condition on our regular endomorphism $f$. We use notions introduced in Setting \ref{setting}. We denote $\mathrm{Per}(f_{\infty})$ as the set of $f_{\infty}$-periodic points in $H_{\infty}(\mathbb{C})$.

\begin{condition}\label{conditon k}
Let $f$ be a regular endomorphism of $\mathbb{A}^N$ which has degree gap $k$. Assume $k>2\max\limits_{P\in\mathrm{Per}(f_{\infty})}e_{f_{\infty}}(P)$.
\end{condition}

This condition has a similar flavor to the assumptions made in previous works about regular endomorphisms. We make restrictions on the ramification status of $f_{\infty}$. For instance, both \cite[Theorem 1.20]{Xie24} and \cite[Theorem A]{DFR} consider regular endomorphisms of $\mathbb{A}^2$. Roughly speaking, the former asks the endomorphism $f_{\infty}$ of $\mathbb{P}^1$ to have no exceptional points and the latter requires it to be unramified at periodic points. With this more delicate condition, we can gain some higher dimensional results.

The following proposition guarantees that there are many examples satisfying Condition \ref{conditon k}. Indeed, if we take $k=(N-1)!\cdot2^N+1$, then $f$ will satisfy our condition if $f_{\infty}$ is a general endomorphism of $H_{\infty}\cong\mathbb{P}^{N-1}$ of an arbitrary degree $d\geq k$. We find this proposition interesting on its own. The proof is given in subsection 2.2.

\begin{proposition}\label{bound e}
Let $f$ be a general endomorphism of $\mathbb{P}^N$ of degree $d$. Then $\max\limits_{x\in\mathbb{P}^N(\mathbb{C})}e_{f}(x)\leq N!\cdot2^N$, regardless of what value $d$ takes.
\end{proposition}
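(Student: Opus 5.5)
We plan to bound the local multiplicity $e_f(x)$ by a dimension count over the parameter space. Let $\mathcal{M}_d\subseteq\mathbb{P}(H^0(\mathcal{O}(d))^{N+1})$ denote the open set of tuples $(F_0,\dots,F_N)$ of degree $d$ forms with no common nonzero zero. The space $\mathcal{M}_d$ is irreducible of dimension $(N+1)\binom{N+d}{N}-1$.

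For a point $x$ and its image $y=f(x)$, work in affine charts centred at $x$ and $y$. The multiplicity $e_f(x)$ is the length of $\mathcal{O}_{x}/(h_1,\dots,h_N)$, where $h_1,\dots,h_N$ are the $N$ local functions obtained by pulling back local coordinates at $y$. These vanish at $x$. The key algebraic input is an upper bound on this length in terms of the order of vanishing of the Jacobian.

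\textbf{Step 1 (local algebra).} Suppose $f$ is \'etale at $x$. Then $e_f(x)=1$. More generally, suppose $d h_1\wedge\cdots\wedge dh_N$ vanishes at $x$ to order $r$ only through certain "generic" degenerations. We want a statement of the form: if the $2$-jet of $(h_1,\dots,h_N)$ at $x$ is generic among maps with prescribed corank $c$, then $e_f(x)\le c!\,2^{c}$.

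The concrete lemma we aim for is the following. Let $\ell$ be the rank of the differential at $x$. After a local coordinate change, take $h_1,\dots,h_\ell$ as coordinates. Then $e_f(x)$ equals the multiplicity at $0$ of the map $\mathbb{C}^{c}\to\mathbb{C}^{c}$ given by the restricted functions $\bar h_{\ell+1},\dots,\bar h_N$, with $c=N-\ell$. These restricted functions have vanishing linear part. If their quadratic parts $q_1,\dots,q_c$ have only the trivial common zero, then by B\'ezout the multiplicity is exactly $2^{c}\le N!\,2^N$.

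\textbf{Step 2 (incidence dimension count).} Consider the incidence variety $\Sigma_c\subseteq\mathcal{M}_d\times\mathbb{P}^N$ of pairs $(f,x)$ for which two conditions hold:
the differential of $f$ at $x$ has corank at least $c$ (in the sense above), and
the quadratic parts $q_1,\dots,q_c$ of the reduced map $\mathbb{C}^c\to\mathbb{C}^c$ have a nontrivial common zero.

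Fix $x$. Using $\mathrm{PGL}$-homogeneity, the $2$-jet map $\mathcal{M}_d\to J^2_x$ is a surjective linear-type projection whenever $d\ge2$, so all jets occur. Corank at least $c$ imposes $c^2$ conditions. The resultant condition that $c$ quadrics in $c$ variables share a nontrivial zero imposes $1$ further condition. Hence the fibre over $x$ has codimension $c^2+1$, and
$$\dim\Sigma_c\le\dim\mathcal{M}_d+N-c^2-1.$$
For $c\ge\lceil\sqrt N\,\rceil$ this is less than $\dim\mathcal{M}_d$, so such points do not occur for a general $f$.

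\textbf{Step 3 (where $N!$ enters).} For small $c$ the argument above only controls corank. A general $f$ may still have points where the quadratic parts degenerate, and there we need the cubic terms. We therefore refine the argument by a stratification by the Hilbert--Samuel type of the jet. At each stage, the finiteness obtained from a general higher jet gives a B\'ezout-type bound on the multiplicity, namely a product of degrees. The admissible degeneracies are limited by the dimension budget $N$. The bound to aim for is that, at a point where the total number of imposed conditions is at most $N$, the degrees used multiply to at most $N!\,2^N$. A crude way to see this: each coordinate can absorb at most one extra degree increment per available dimension of freedom, giving degrees bounded by $2,4,\dots$ (in the worst case something like $2\cdot i$ for the $i$-th variable), and $\prod_{i=1}^N 2i=N!\,2^N$.

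\textbf{Main obstacle.} The hardest step is Step 3: making the codimension estimate precise for each degeneration stratum, and matching it to a B\'ezout bound. I would first try induction on $N$. Restrict to a general curve or hypersurface through $x$, use that each increase of local degree costs at least one condition, and note that $\mathcal{M}_d$ surjects onto the space of $m$-jets at $x$ for $m\le d$. The case $d<m$ is harmless, because then the B\'ezout bound $d^N$ applies directly if $d$ is small. One then has to check that $d^N\le N!\,2^N$ is not needed there, since the generic jet argument still works.
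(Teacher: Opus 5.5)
There is a genuine gap, and it is the one you flag yourself. Steps~1--2 are sound as far as they go: corank $\ge c$ costs $c^2$ conditions on the $1$-jet, and nondegenerate quadratic parts give $e_f(x)=2^c$. But the count only excludes degenerate points of corank $c$ when $c^2+1>N$. For $c^2+1\le N$ (already $c=1$ when $N\ge2$, i.e.\ cusps) such points genuinely occur for general $f$, and nothing you have written bounds $e_f(x)$ there.

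Step~3 is not an argument. The per-stratum bound $\prod_{i=1}^N 2i$ is read off from the target rather than derived. Worse, the principle you propose to induct on, that each unit increase of the local degree costs at least one condition, is false. At corank $c$ with nondegenerate quadratic part the multiplicity is $2^c$ at a cost of only $c^2$ conditions, and $2^c-1>c^2$ for $c\ge5$. To make your strategy work you would need, for every $m>N!\,2^N$, that the germs $(h_1,\dots,h_N)$ with $\mathrm{length}\,\mathcal{O}_x/(h_1,\dots,h_N)\ge m$ form a locus of codimension $>N$ in the jet space. That means controlling the dimensions of families of non-curvilinear colength-$m$ complete-intersection ideals, which is a hard problem. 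On top of that, your count needs the $m$-jet map from $\mathcal{M}_d$ to be surjective, which fails once $m$ is large compared with $d$. Meanwhile the fallback $e_f(x)\le d^N$ is useless as soon as $d^N>N!\,2^N$, and your last paragraph leaves that range unresolved.

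The missing idea is that no analysis of generic singularities is needed. The function $e(f)=\max_x e_f(x)$ is upper semicontinuous on $\mathrm{Hom}_d(\mathbb{P}^N)$. To see this, apply upper semicontinuity of the multiplicity of a finite flat morphism to $F(x,f)=(f(x),f)$ on $\mathbb{P}^N\times\mathrm{Hom}_d(\mathbb{P}^N)$, whose fibrewise multiplicity is $e_f(x)$. Then push the closed set $\{e_F\ge M\}$ forward along the proper projection to $\mathrm{Hom}_d(\mathbb{P}^N)$. Hence it suffices to exhibit a single $f$ of degree $d$ with $e(f)\le N!\,2^N$. The paper takes a degree-$d$ Latt\`es map $g$ of $\mathbb{P}^1$, all of whose local degrees are $\le2$, and lets $f$ be the map induced by $g^{\times N}$ on $\mathbb{P}^N=(\mathbb{P}^1)^N/S_N$. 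Multiplicativity of $e$ under composition then gives $e(f)\le e(g)^N\cdot\deg\pi=2^N\cdot N!$. So the two factors come from the product of Latt\`es maps and the degree of the symmetric quotient, not from a product of B\'ezout degrees.
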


Now we can state our second main theorem.

\begin{theorem}\label{main2}
Let $f$ be a regular endomorphism of $\mathbb{A}^N$ satisfying Condition \ref{conditon k}. Then $f$ satisfies the DML(1) property. Moreover, every irreducible $f$-periodic curve is a vertical line.
\end{theorem}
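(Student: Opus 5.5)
We deduce Theorem \ref{main2} from Theorem \ref{main} together with a standard reduction of DML(1) to periodic curves. Fix an irreducible closed curve $C\subseteq\mathbb{A}^N$ and a point $x\in\mathbb{A}^N(\mathbb{C})$. If $\mathcal{O}_f(x)\cap C$ is finite, the return set is finite and hence a finite union of (trivial) arithmetic progressions. If the orbit of $x$ is finite, i.e. $x$ is preperiodic, the return set is eventually periodic and again of the required form. So we may assume $x$ has infinite orbit and $\{n\mid f^n(x)\in C\}$ is infinite. By Theorem \ref{main}(i), every point of $I=\overline{C}\cap H_\infty$ is $f_\infty$-periodic, so the set $I_0$ of Theorem \ref{main}(ii) is contained in $\mathrm{Per}(f_\infty)$. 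Condition \ref{conditon k} then gives $k>2e_{f_\infty}(P)$ for every $P\in I_0$. Theorem \ref{main}(ii) therefore applies and shows that $C$ is an $f$-periodic vertical line, say $f^m(C)=C$.

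Next we establish the return-set structure for such a $C$. Set $S=\{n\mid f^n(x)\in C\}$ and pick $n_0\in S$. Since $f^m(C)=C$, the set $S$ is stable under $n\mapsto n+m$. Hence $S$ is a union of residue classes modulo $m$, each taken from its least element onward, together with at most finitely many further elements. Concretely, for each residue $r$ modulo $m$, either no $n\equiv r$ lies in $S$, or the least such $n_r$ satisfies $\{n\equiv r,\ n\ge n_r\}\subseteq S$. Because $S$ is closed under adding $m$, this set is exactly $S\cap(r+m\mathbb{Z})$. So $S$ is a finite union of arithmetic progressions. We need only forward invariance $f^m(C)\subseteq C$, which holds because $f$ is finite (it extends to $\mathbb{P}^N$), so $f^m(C)$ is a closed irreducible curve equal to $C$.

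For the ``moreover'' statement, let $C$ be an irreducible $f$-periodic curve with $f^m(C)=C$. The map $f^m|_C$ is a finite surjective self-map of $C$. If $C$ contains a point with infinite $f$-orbit, the orbit of that point meets $\bigcup_{i<m}f^i(C)$, and in particular $C$ itself, infinitely often, and the argument above shows that $C$ is a vertical line. It remains to rule out that every point of $C$ is $f$-preperiodic, and we expect this to be the main obstacle. Our plan is a height argument. Periodic curves of the regular endomorphism $f$, which extends to $\mathbb{P}^N$ with $f^*\mathcal{O}(1)=\mathcal{O}(d)$, $d\ge k\ge 3$, are defined over a finitely generated field. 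Taking the canonical height $\hat h_f$ relative to $\mathcal{O}(1)$, either after specialization to $\overline{\mathbb{Q}}$ or using the Moriwaki height, the preperiodic points are exactly those with $\hat h_f=0$. An entire curve cannot consist of height-zero points, because $\hat h_f$ restricted to $C$ is the height of an ample class up to bounded error and therefore takes unbounded values by Northcott. Hence $C$ contains a point with infinite orbit, and it is a vertical line.

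If the height argument over $\mathbb{C}$ proves delicate, the fallback is a countability argument. Preperiodic points of $f$ are solutions of $f^{a}(y)=f^{b}(y)$, a countable union of proper closed subsets of $C$. This union cannot equal $C(\mathbb{C})$ unless one of the sets is all of $C$, which would make $f^a=f^b$ on $C$. That is impossible for the finite map $f^m|_C$ of degree $d^{m}$-type growth on $\overline{C}\cap H_\infty\neq\emptyset$ (the degree of $\overline{f^n(C)}$ relative to $\mathcal{O}(1)$ is controlled by $d^n$), unless $f|_C$ has finite order. The finite-order case is excluded because $f$ multiplies degrees at infinity by $d>1$ along the points of $\overline{C}\cap H_\infty$.
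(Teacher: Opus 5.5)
Your proposal is correct and follows the paper's route: Theorem \ref{main}(i) puts $I_0$ inside $\mathrm{Per}(f_\infty)$, so Condition \ref{conditon k} triggers Theorem \ref{main}(ii), and the ``moreover'' part holds because every curve contains a non-preperiodic point, which the paper takes as immediate, just as it does the arithmetic-progression structure of return sets into periodic curves. Your countability fallback is the cleanest way to supply that point, and it needs only one observation: if $f^a=f^b$ on $C$ with $a\neq b$, then since $f^*H_\infty=dH_\infty$ you get $d^a\,\iota^*H_\infty=d^b\,\iota^*H_\infty$ for $\iota:\overline{C}\hookrightarrow\mathbb{P}^N$, which is impossible because $\overline{C}\cap H_\infty\neq\emptyset$ (also, handle preperiodic $x$ before using ``finite intersection implies finite return set'', since that implication fails for preperiodic $x$).
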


A remarkable feature of our approach towards the dynamical Mordell--Lang conjecture in this case is that we can determine all periodic curves of the dynamical system, as a by-product of the proof. In particular, for a regular endomorphism
$$
f(x_1,\dots,x_N)=(f_1(x_1,\dots,x_N)+g_1(x_1,\dots,x_N),\dots,f_N(x_1,\dots,x_N)+g_N(x_1,\dots,x_N))
$$
presented as in Setting \ref{setting}, the following statements hold provided that $f$ satisfies Condition \ref{conditon k}. They are easy consequences of Theorem \ref{main2}.
\begin{enumerate}
\item
If $g_1(x_1,\dots,x_N)=\cdots=g_N(x_1,\dots,x_N)=0$, then irreducible $f$-periodic curves are the vertical lines passing through points in $\mathrm{Per}(f_{\infty})$. Here we abuse some notations as the curves are contained in $\mathbb{A}^N$. Notice that under this situation, the degree gap of the iteration $f^n$ is $d^n$ where $d$ is the algebraic degree of $f$. Hence by doing iteration, one can see that this assertion strengthens the ``moreover" part of \cite[Theorem 1.20]{Xie24} and generalizes it to the higher dimensional case.

\item
If the union of $f$-periodic curves (i.e. periodic vertical lines) is Zariski dense in $\mathbb{A}^N$, then we must have $g_1(x_1,\dots,x_N)=\cdots=g_N(x_1,\dots,x_N)=0$. This partially generalizes the first part of \cite[Theorem 1.20]{Xie24}.

\item
In particular, if $g_{1,0}=g_1(0,\dots,0),\dots,g_{N,0}=g_N(0,\dots,0)$ are not all zero, then we may focus on the vertical line passing through $[g_{1,0},\dots,g_{N,0}]\in H_{\infty}(\mathbb{C})$ (we abuse the notation as above). If this line is $f$-invariant, then it is the only irreducible $f$-periodic curve; otherwise, there are no irreducible $f$-periodic curves.
\end{enumerate}

In \cite{MS14}, the authors characterized the invariant subvarieties of split endomorphisms of $(\mathbb{A}^1)^N$. The core part is to characterize the invariant curves of split endomorphisms of $(\mathbb{A}^1)^2$, which is already quite hard. This reflects that it may be unlikely to determine all invariant curves for an arbitrary regular endomorphism of $\mathbb{A}^N$.

The following example illustrates that Condition \ref{conditon k} is optimal to force every periodic curve to be a vertical line, in the sense that one cannot change ``$>$" into ``$\geq$".

\begin{example} \label{example optimal}
Let $C\subseteq\mathbb{A}^2$ be the curve $\{x^2-y^2=4\}$.
\begin{enumerate}
\item
Let $f$ be the regular endomorphism of $\mathbb{A}^2$ defined by $f(x,y)=(x^3-3x,y^3+3y)$. Then $f$ has degree gap $2$ and $C$ is $f$-invariant, while $f_{\infty}$ is unramified at the points in $\overline{C}\cap H_{\infty}=\{[0,1,1],[0,1,-1]\}$. This shows the optimality in the settings of Theorem \ref{main} and Corollary \ref{cor main}.
\item
Let $f$ be the regular endomorphism of $\mathbb{A}^2$ defined by $f(x,y)=(\frac{x^4+y^4}{2}-6,\frac{xy(x^2+y^2)}{2})$. Then $f$ has degree gap $4$ and $C$ is $f$-invariant, while $\max\limits_{x\in H_{\infty}(\mathbb{C})}e_{f_{\infty}}(x)=2$. This shows the optimality in Theorem \ref{main2}.
\end{enumerate}
\end{example}

\subsection{Discussions}

We make some discussions about the conditions of our main results, and describe the proof strategy.

The idea of using heights to deal with the DML Conjecture dates back to almost 20 years ago. For example, see \cite{BGKT10} and \cite[Theorem 8.1]{GTZ12}. Philosophically, this work is a successor of \cite{XY}, in which the authors use a height argument to investigate the dynamical Mordell--Lang conjecture. As the authors mentioned in the introduction of \cite{XY}, the key point of such an argument is to find \emph{two different speeds of growth}. We recall a main result in \cite{XY} to further illustrate this issue.

\begin{theorem}(\cite[Theorem 1.1]{XY})\label{XY}
Let $X$ be a projective variety over an algebraically closed field of arbitrary characteristic and let $f$ be a surjective endomorphism of $X$. Suppose that any cohomological Lyapunov multiplier of any iterate of $f$ is not a positive integer. Let $C\subseteq X$ be a closed subcurve and let $x\in X(K)$ be a point. If $\overline{\mathcal{O}_f(x)}=X$, then $\mathcal{O}_{f}(x)\cap C(K)$ is a finite set.
\end{theorem}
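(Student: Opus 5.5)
We plan to prove Theorem \ref{XY} by a height argument: we compare two growth rates along the subsequence of the orbit that lies on $C$, and show that they can only agree if some cohomological Lyapunov multiplier is a positive integer.

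\textbf{Setup.} First we spread out. The data $X,f,C,x$ are defined over a finitely generated field $K_0\subseteq K$. Using Moriwaki heights in characteristic $0$, or function-field heights in positive characteristic, we obtain a Weil height machine $D\mapsto h_D$ on $X(\overline{K_0})$. It satisfies functoriality $h_{f^*D}=h_D\circ f+O(1)$ and positivity $h_A\ge -O(1)$ for $A$ ample.

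Two reductions follow.
\begin{itemize}
\item We may assume $C$ is irreducible and $\dim X\ge 2$; the curve case is trivial, since then $C=X$ would contradict properness of $C$.
\item We may assume $C$ is not $f$-periodic. Otherwise an infinite intersection would put the orbit eventually inside the finite union $\bigcup_i f^i(C)$, contradicting $\overline{\mathcal{O}_f(x)}=X$.
\end{itemize}
Now suppose, for contradiction, that $f^{n_i}(x)\in C$ for an infinite increasing sequence $n_i$.

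\textbf{First growth speed (along the orbit).} We decompose $f^*$ acting on $N^1(X)_{\mathbb{R}}$ into Jordan blocks, after replacing $f$ by an iterate so that the eigenvalues of maximal modulus have positive real arguments. For eigen-classes and generalized eigen-classes $D_j$ with eigenvalues $\mu_j$, standard telescoping (Call--Silverman/Kawaguchi--Silverman style) gives
$$h_{D_j}(f^n(x))=c_j\,\mu_j^n n^{\ell_j}+o(\mu_j^n n^{\ell_j}).$$
Because the orbit is Zariski dense, the leading coefficient $c_j$ attached to an ample class is nonzero. Hence the vector $\big(h_{D}(f^n(x))\big)_D$, once normalized, tends to a prescribed direction in $N^1(X)^\vee$. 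This direction is governed by the cohomological Lyapunov multipliers and carries explicit $n^{\ell}$ and $(\mu_j/\mu_1)^n$ corrections.

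\textbf{Second growth speed (on the curve).} On the fixed curve $C$, with normalization $\nu:\tilde C\to C$, the Néron--Tate/Riemann--Roch estimate gives
$$h_D|_C=\frac{\deg \nu^*D}{\deg\nu^*A}\,h_A|_C+O\big(h_A^{1/2}\big).$$
So every height restricted to $C$ is, up to lower-order error, a rational multiple of one fixed height, with the rational coefficient determined by intersection numbers $D\cdot C\in\mathbb{Z}$.

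\textbf{Comparison.} We evaluate both descriptions at $f^{n_i}(x)$, and also at $f^{n_i+m}(x)$ for fixed $m$. The latter points lie on $f^m(C)$, which is again a curve with integral intersection numbers. Comparing the ratios $h_{f^{m*}A}/h_A$ at these points, the orbit side yields asymptotically the Lyapunov-type growth $\mu^m$ (possibly with polynomial factor), while the curve side yields the ratio $(f^{m*}A\cdot C)/(A\cdot C)$. Letting $i\to\infty$ should force $\mu^m$ to equal a ratio of positive integers for all $m$, and compatibility under iteration should force $\mu$ itself to be a positive integer. This is the desired contradiction.

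\textbf{Main obstacle.} We expect the hardest step to be this last comparison. Several eigenvalues may share the top modulus, and the $O(h^{1/2})$ error on $C$ must be beaten by the gaps between the Jordan-block growth terms. Making this precise, and ensuring that the quantity forced to be an integer is exactly a cohomological Lyapunov multiplier of an iterate and not merely a ratio of degrees, requires isolating the correct eigen-direction. For that step we would first try passing to the subsequence along which the normalized height vector converges, and project onto the corresponding generalized eigenspace.
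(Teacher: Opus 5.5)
The paper does not prove this statement. It only quotes \cite[Theorem 1.1]{XY} to illustrate that a height argument needs two different speeds of growth, so your proposal has to be judged on its own. The curve side of your argument is sound. Since $h_A(f^{n_i}(x))\to\infty$ by Northcott, heights on the normalization of $C$ give, for each fixed $m$,
\[
\lim_{i\to\infty}\frac{h_A(f^{n_i+m}(x))}{h_A(f^{n_i}(x))}=\frac{(f^{m*}A\cdot C)}{(A\cdot C)}\in\mathbb{Q}_{>0}.
\]

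The genuine gap is on the orbit side, which is where the whole content of the theorem lies. The expansion $h_{D_j}(f^n(x))=c_j\mu_j^n n^{\ell_j}+o(\cdot)$, with a nonzero leading coefficient for an ample class, is not something telescoping provides.

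\textbf{Error terms.} You can only work in $N^1(X)$, so each telescoping step carries an error $h_E$ with $E$ numerically trivial, of size $O(\sqrt{h_A})$. Along the orbit this is $O((\lambda_1(f)+\varepsilon)^{n/2})$. Telescoping therefore produces canonical heights only for eigenvalues of modulus $>\sqrt{\lambda_1(f)}$, and gives nothing for the other Jordan blocks.

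\textbf{Nonvanishing.} This is the decisive problem. The claim that the leading coefficient is nonzero because the orbit is Zariski dense is a strengthened Kawaguchi--Silverman conjecture ($\alpha_f(x)=\lambda_1(f)$ for dense orbits). That conjecture is open in general, and certainly not available in arbitrary dimension and characteristic. What is known (Kawaguchi--Silverman, over number fields) is only that $\alpha_f(x)$ is $1$ or the modulus of \emph{some} eigenvalue of $f^*$ on $N^1(X)$. This gives no control on which eigenvalue occurs and no limit for the ratios $h_A(f^{n+m}(x))/h_A(f^n(x))$.

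\textbf{Oscillation.} Peripheral eigenvalues whose arguments are not rational multiples of $\pi$ can make these ratios oscillate. Passing to an iterate only removes roots of unity, so it does not help.

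As a result, your comparison step has nothing to compare the rational number above with. Note also that your argument would use only the dominant eigenvalue, whereas the hypothesis bears on every multiplier of every iterate. This suggests a correct proof cannot presuppose the rate at which the orbit grows.

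Two smaller points.

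\begin{enumerate}
\item The reduction to $\dim X\geq 2$ is misargued. If $X$ is a curve, then $C=X$ is allowed and the conclusion fails for every infinite orbit. That case is excluded only by the hypothesis, since the multiplier is then $\deg f\in\mathbb{Z}_{>0}$; ``properness of $C$'' plays no role.
\item In the last step, ``$\mu^m$ is a ratio of positive integers'' (take $m=1$) yields only $\mu\in\mathbb{Q}$. Integrality comes from $\mu$ being an eigenvalue of an operator preserving the lattice of integral classes in $N^1(X)$, hence an algebraic integer. It does not come from ``compatibility under iteration''. You would also still have to check that the rate you isolate is a cohomological Lyapunov multiplier in the sense of \cite{XY}.
\end{enumerate}
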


We will not recall the definition of cohomological Lyapunov multipliers here (see \cite{XY}). Instead, we remark that philosophically the condition of Theorem \ref{XY} means that the dynamical system is ``at the opposite of polarized systems". Hence the system is not ``isotropic", and we are managed to find two speeds of height growth in there. However, the dynamical systems studied in this paper are polarized. Thus, the ``height argument" fails and we need to use a ``local height argument" instead. Also, this explains the philosophy behind Condition \ref{conditon k} --- ``break down the isotropy". To this end, we naturally consider a regular endomorphism $f$ which has a visible degree gap, while the behavior of $f_{\infty}$ is general.

The article \cite{XYZ} is a baby forerunner of this paper. The main result that the authors proved in \cite{XYZ} can be summarized in the following proposition. By combining with some previous results, the authors solved the dynamical Mordell--Lang conjecture for split self-maps of affine curve times projective curve in \cite{XYZ}. The base field of \cite{XYZ} is $\overline{\mathbb{Q}}$.

\begin{proposition}\label{xyz}
Let $f$ be an endomorphism of $\mathbb{A}^1$ and $g$ be an endomorphism of $\mathbb{P}^1$. Suppose $\mathrm{deg}(f)=\mathrm{deg}(g)\geq 2$ and $g$ has no exceptional point. Let $C\subseteq\mathbb{A}^1\times\mathbb{P}^1$ be an irreducible closed subcurve which is not a fiber of $\mathbb{A}^1$ or $\mathbb{P}^1$. Then for every $x_0\in\mathbb{A}^1(\overline{\mathbb{Q}})$ and $y_0\in\mathbb{P}^1(\overline{\mathbb{Q}})$, the set $C\cap\mathcal{O}_{f\times g}(x_0,y_0)$ is finite.
\end{proposition}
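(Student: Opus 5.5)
The approach is to compare two local growth rates at one well-chosen place $v$ of a number field $K$ over which $x_0,y_0,f,g,C$ are all defined. On the $\mathbb{A}^1$-factor, the Green function of $f$ grows exactly like $d^n$ along the orbit. On the $\mathbb{P}^1$-factor, the hypothesis that $g$ has no exceptional point forces the orbit of $y_0$ to approach any fixed point only sub-exponentially in $d^n$. The curve $C$ ties the two together near infinity.

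\emph{Preliminary reductions.} First, if $x_0$ is $f$-preperiodic, then $f^n(x_0)$ takes only finitely many values $a$. Since $C$ is not a fiber, each $C\cap(\{a\}\times\mathbb{P}^1)$ is finite, so $C\cap\mathcal{O}_{f\times g}(x_0,y_0)$ is a finite set. Second, if $y_0$ is $g$-preperiodic, the same argument applies with the roles of the factors exchanged. So assume neither point is preperiodic, and suppose for contradiction that $(f^n(x_0),g^n(y_0))\in C$ for infinitely many $n$.

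Let $\hat h_f$ be the canonical height, and for each place $v$ let $G_{f,v}(x)=\lim d^{-n}\log^+|f^n(x)|_v$ be the $v$-adic Green function. Then $\hat h_f(x_0)>0$ by Northcott, and $\hat h_f$ is the weighted sum of the $G_{f,v}$. Hence there is a place $v$ with $G_{f,v}(x_0)>0$. Along the return times, the functional equation $G_{f,v}(f^n x)=d^nG_{f,v}(x)$ gives
\[
\log|f^n(x_0)|_v = d^n G_{f,v}(x_0)+O(1)\to\infty .
\]

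\emph{Transferring this to the second coordinate.} Take the closure $\overline C\subseteq\mathbb{P}^1\times\mathbb{P}^1$. Its intersection with $\{\infty\}\times\mathbb{P}^1$ is a finite set $\{(\infty,b_1),\dots,(\infty,b_r)\}$, and it is nonempty because $C$ is not a fiber. Passing to a subsequence, we may assume the points $(f^n(x_0),g^n(y_0))$ converge $v$-adically to a single $(\infty,b)$. Near $(\infty,b)$, each branch of $\overline C$ has a Puiseux parametrization $1/x=t^{e}$, $y-b=\sum_{j\ge m}c_jt^j$ with $m\ge1$ (if $b=\infty$, use the coordinate $1/y$). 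Therefore
\[
\log\delta_v(g^n(y_0),b)\le -\tfrac{m}{e}\log|f^n(x_0)|_v+O(1)\le -c\,d^n
\]
for some $c>0$ and all large $n$ in the subsequence, where $\delta_v$ is the chordal distance. To make this uniform, I would cover the finitely many branches through the finitely many $b_i$.

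\emph{The key input.} Now I would invoke Silverman's theorem on Diophantine approximation along orbits. Let $g$ be a rational map of degree $d\ge2$ over a number field, let $y_0$ be non-preperiodic, and let $b$ be a point that is not exceptional for $g$. Then
\[
\lim_{n\to\infty} d^{-n}\log\delta_v(g^n(y_0),b)=0 .
\]
The proof passes through Roth's theorem applied to $g^{n}$ near the backward orbit of $b$. Since $g$ has no exceptional points, this applies to every $b$, including $b=\infty$. It contradicts the bound $\le -c\,d^n$ along an infinite subsequence, which proves the proposition.

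The main obstacle is exactly this step: the lower bound for $v$-adic closeness of $g^n(y_0)$ to $b$. The no-exceptional-point hypothesis is what prevents a totally ramified cycle, such as $z\mapsto z^d$ near $0$, from producing approximation at rate $d^n$. If one wants to avoid quoting Silverman, the fallback is to redo his argument. Choose $k$ so that $g^{-k}(b)$ contains at least three points, all of which are non-exceptional and not totally ramified. Then Roth's theorem applied to $g^{n-k}(y_0)$ approaching these points bounds the exponent strictly below $d^k$, which after rescaling yields the $o(d^n)$ estimate.
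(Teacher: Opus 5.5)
Your proposal is correct and is essentially the argument the paper sketches. At a place $v$ with $G_{f,v}(x_0)>0$ the first coordinates escape to $\infty$ at rate $d^n$; the local structure of $\overline{C}$ at a point $(\infty,b)$ turns this into $\log\delta_v(g^n(y_0),b)\le -c\,d^n$ along a subsequence; and Silverman's Theorem E, which applies to every $b$ because $g$ has no exceptional points, rules this out. Only your fallback sketch is off: Roth's theorem there needs $\max_{\beta\in g^{-k}(b)}e_{g^k}(\beta)/d^k\to0$ as $k\to\infty$ (which is exactly what non-exceptionality of $b$ gives), not just three unramified points in $g^{-k}(b)$ with an exponent bound strictly below $d^k$.
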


The proof of Proposition \ref{xyz} is by using a local height argument, which we briefly recall as follows. Let $\overline{C}$ be the closure of $C$ in $\mathbb{P}^1\times\mathbb{P}^1$ and let $L_{\infty}$ be the infinity line in $\mathbb{P}^1\times\mathbb{P}^1$. Then for an appropriate place, the points in $\mathcal{O}_{f\times g}(x_0,y_0)$ tend to $L_{\infty}$ with the maximal speed. So a subsequence of the points in $C\cap\mathcal{O}_{f\times g}(x_0,y_0)$ shall tend to a point in $P\in\overline{C}\cap L_{\infty}$, for which the $\mathbb{P}^1$-component of the points must also tend to $P$ on $L_{\infty}$ with the maximal speed. However, Silverman's result \cite[Theorem E]{Sil93} guarantees that this cannot happen.

One can see that the key point in the procedure above is finding two different speeds of local height growth at a certain place. Then Silverman's result controls the speed of the slower one. By looking into the proof of \cite[Theorem E]{Sil93}, one finds that Roth's theorem lies at the core. As a result, Roth's theorem will also play a central role in this article. Matsuzawa has used Roth's theorem to study problems in high-dimensional arithmetic dynamics \cite{Mat23,Mat25}. Inspired by his work, we try to use the local height argument to study the dynamical Mordell--Lang conjecture.

Now we briefly introduce the proof strategy of Theorem \ref{main}. As we work over an arbitrary field of characteristic 0 instead of number fields, we need to use the Arakelov theory of arithmetic function fields (see subsection 2.1). Roth's theorem over arithmetic function fields is established by Vojta \cite{Voj21}. This setting leads to some extra technical subtleties, in comparison to the setting over number fields (see the $L^1$-integrability condition in Definition \ref{3.1}).

We find an arithmetic function field $K$ (i.e.  a finitely generated extension field of $\mathbb{Q}$) such that all data are defined over $K$. The proof can be divided into three steps.

\textbf{Step 1:} We find a sequence of irreducible closed subcurves
$$
\cdots\stackrel{f}\rightarrow C_{-n}\stackrel{f}\rightarrow\cdots\stackrel{f}\rightarrow C_{-1}\stackrel{f}\rightarrow C_0=C
$$
contained in $\mathbb{A}_K^N$ such that each $C_{-n}$ has an infinite intersection with the orbit $\mathcal{O}_f(x)$. Then for every $n$, we have $f(\overline{C_{-n-1}}\cap H_{\infty})=\overline{C_{-n}}\cap H_{\infty}$ and $1\leq|\overline{C_{-n}}\cap H_{\infty}|\leq2$. The latter is guaranteed by Siegel's theorem \ref{siegel}. Then Northcott's finite property \ref{northcott} forces all points in $\overline{C_{-n}}\cap H_{\infty}$ to be $f_{\infty}$-periodic. This finishes the proof of Theorem \ref{main}(i). Moreover, with the notation as in Theorem \ref{main}, we can see that $I_0=\bigcup\limits_{n\in\mathbb{N}}\left(\overline{C_{-n}}\cap H_{\infty}\right)$. This argument is learned from \cite[Section 8]{Xie17}.

\textbf{Step 2:} By adapting an argument using triangle inequality, we find that the curves become more and more ``vertical" at the (one or two) points in $\overline{C_{-n}}\cap H_{\infty}$. See Proposition \ref{3.15}.

\textbf{Step 3:} Using Roth's theorem, we conclude that $C_{-n}$ must be a vertical line if it is sufficiently vertical at the points in $\overline{C_{-n}}\cap H_{\infty}$. See Proposition \ref{4.5}. This finishes the proof.

One can see that the usage of Roth's theorem in our work is not quite the same as that in \cite[Theorem E]{Sil93} or \cite{Mat23,Mat25}. In previous works, Roth's theorem is often adapted to the setting ``if $f(x)$ is sufficiently near $P$, then $x$ must be very close to a point in $f^{-1}(P)$". In our work, Roth's theorem is used to prove that a ``sufficiently vertical" curve must be a vertical line.

~

At the end of the Introduction, we outline the structure of this paper. In Section 2, we recall the Arakelov theory of arithmetic function fields and prove Proposition \ref{bound e}. In Section 3, we introduce the notion of local heights and establish some inequalities for future proof. We place some detailed proofs of technical lemmas in the Appendix. Then in Section 4, we prove our main result and its consequences, i.e. Theorem \ref{main}, Corollary \ref{cor main}, and Theorem \ref{main2}. Finally, in Section 5, we consider the positive characteristic case of our setting and make some discussions.

\section{Preparations}

In subsection 2.1, we recall some knowledge of Arakelov theory for arithmetic function fields, which will be the fundamental setting of our work. We will recall Roth's theorem, Siegel's theorem, and the Northcott property in this setting.

In subsection 2.2, we prove Proposition \ref{bound e}. This statement does not lie in the mainstream of this article, but it is also interesting on its own. We find it an appropriate place to give its proof in here.

\subsection{Arakelov theory for arithmetic function fields}

In this subsection, we closely follow \cite[Sections 3--4]{Voj21} to recall the Arakelov theory for arithmetic function fields. All results in this subsection can be found in loc. cit.. We will use \cite{Voj21} as a main reference for the Arakelov theory through this work. One can look into it for the definitions of terminologies in Arakelov theory (such as arithmetic varieties, smoothly metrized line sheaves, etc.).

An \emph{arithmetic function field} $K$ is by definition a finitely generated extension field of $\mathbb{Q}$. Let $d=\mathrm{tr.deg}_{\mathbb{Q}}(K)$. Let $B$ be a normal arithmetic variety, whose function field is isomorphic to $K$. Let $\mathcal{M}$ be an ample smoothly metrized line sheaf on $B$. Then we get a big polarization $M=(B;\mathcal{M})$. This is an abbreviation of $(B;\mathcal{M},\dots,\mathcal{M})$ in which there are $d$ copies of $\mathcal{M}$.

The polarization $M$ gives a set of absolute values $M_{K}=M_{K}^{\infty}\sqcup M_{K}^{0}$ of $K$ in the following way.

Let $B^{(1)}$ be the set of prime Weil divisors on $B$. The set $M_{K}^{0}$ of non-archimedean absolute values will have a bijection with $B^{(1)}$. For every $Y\in B^{(1)}$, let $h_M(Y)=c_1(\mathcal{M}|_{Y})^d$ be the arithmetic intersection number. As $h_M(Y)\geq0$, the formula $\|x\|_Y=\mathrm{e}^{-h_M(Y)\mathrm{ord}_Y(x)}$ for $x\in K^{\times}$ defines a non-archimedean absolute value of $K$. This finishes the description of $M_K^{0}$.

The set $M_{K}^{\infty}$ of archimedean absolute values will have a bijection with $B(\mathbb{C})^{\mathrm{gen}}:=B(\mathbb{C})\setminus\bigcup\limits_{Y\in B^{(1)}}Y(\mathbb{C})$. For every $b\in B(\mathbb{C})^{\mathrm{gen}}$, the formula $\|x\|_b=|x(b)|$ for $x\in K$ defines an archimedean absolute value of $K$. This finishes the description of $M_K^{\infty}$.

Now we make $M_K$ into a measure space. Let $\mu_{\mathrm{fin}}$ be the counting measure on $M_K^0$, and let $\mu_{\infty}$ be the Lebesgue measure on $B(\mathbb{C})$ associated to the semipositive $(d,d)$-form $c_1(\|\cdot\|_{\mathcal{M}})^d$. Combining $\mu_{\mathrm{fin}}$ and $\mu_{\infty}$, we get a measure $\mu$ on $B(\mathbb{C})\sqcup M_K^0\supseteq M_K$. Notice that $B(\mathbb{C})\setminus M_{K}^{\infty}$ has measure zero and $M_K^{\infty}$ has finite measure.

In this setting, we have a \emph{product formula} which says that $\int_{M_K}\log\|x\|_vd\mu(v)=0$ for every $x\in K^{\times}$. See \cite[Section 3.2]{Mor00}.

The product formula allows us to define a ``\emph{naive height}" on $\mathbb{P}^N(K)$.

\begin{definition}\label{2.1}
The \emph{naive height} function $h_K:\mathbb{P}^N(K)\rightarrow\mathbb{R}_{\geq0}$ is defined by $h_K([x_0,\dots,x_N])=\int_{M_K}\log\max\{\|x_0\|_v,\dots,\|x_N\|_v\}d\mu(v)$.
\end{definition}

In particular, by taking $N=1$, we get a naive height function $h_K:K\rightarrow\mathbb{R}_{\geq0}$ given by $h_K(x)=\int_{M_K}\log^{+}\|x\|_vd\mu(v)$. Here and in the sequel, we denote $\log^{+}x=\max\{0,\log x\}$ and $\log^{-}x=\min\{0,\log x\}$ for $x\in\mathbb{R}_{\geq0}\cup\{+\infty\}$. So $\log^{+}x$ takes value in $\mathbb{R}_{\geq0}\cup\{+\infty\}$ and $\log^{-}x$ takes value in $\mathbb{R}_{\leq0}\cup\{-\infty\}$.

Using Arakelov theory, we can extend the naive height function to algebraic points as follows. See \cite[Section 3.3]{Mor00} and also \cite[Subsection 3C]{Voj21}.

Let $\mathcal{O}(1)_{\mathrm{can}}$ be the continuously metrized line sheaf on $\mathbb{P}_{\mathbb{Z}}^N$ equipped with the canonical metric. Pulling it back to $\mathbb{P}_{B}^N$, we get a continuously metrized line sheaf on $\mathbb{P}_{B}^N$. By abusing notation, we still call this object as $\mathcal{O}(1)_{\mathrm{can}}$. Since $\mathbb{P}_K^N$ is the generic fiber of the projection $\mathbb{P}_{B}^N\rightarrow B$, we can talk about the closure $\bar{x}$ in $\mathbb{P}_{B}^N$ of a closed point $x\in\mathbb{P}_K^N$.

\begin{definition}\label{2.2}
Let $\pi$ be the projection map $\mathbb{P}_{B}^N\rightarrow B$. For $x\in\mathbb{P}^N(\overline{K})$, we define $h_{\mathcal{O}(1)_{\mathrm{can}}}(x)=\frac{c_1(\pi^*\mathcal{M}|_{\bar{x}})^d\cdot c_1(\mathcal{O}(1)_{\mathrm{can}}|_{\bar{x}})}{[\kappa(x):K]}$. Here $\kappa(x)$ is the residue field of the closed point $x\in\mathbb{P}_K^{N}$.
\end{definition}

According to \cite[Proposition 3.3.2]{Mor00}, we have $h_K(x)=h_{\mathcal{O}(1)_{\mathrm{can}}}(x)$ for every $x\in\mathbb{P}^N(K)$. By base change \cite[Proposition 3.3.1]{Mor00}, we can see that the function $h_{\mathcal{O}(1)_{\mathrm{can}}}$ also takes value in $\mathbb{R}_{\geq0}$.

The following theorem is the main result in Moriwaki's height machinery.

\begin{theorem}(Northcott's finiteness property; \cite[Theorem 4.3]{Mor00})\label{northcott}
For all $C>0$ and $n\in\mathbb{Z}_+$, the set $\{x\in\mathbb{P}^N(\overline{K})|\ h_{\mathcal{O}(1)_{\mathrm{can}}}(x)\leq C\text{ and }[\kappa(x):K]\leq n\}$ is finite.
\end{theorem}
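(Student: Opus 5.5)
Since this is Moriwaki's theorem, I would follow the classical number-field strategy, transplanted to the arithmetic-function-field setting of this subsection. It proceeds in two reductions.

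\textbf{Step 1: reduction to $K$-rational points.} Let $x\in\mathbb{P}^N(\overline{K})$ with $[\kappa(x):K]\leq n$, and let $x=x_1,\dots,x_m$ ($m\leq n$) be its Galois conjugates. I would attach to $x$ its Chow form, i.e. the point
\[
F_x=\prod_{i=1}^m\Bigl(\sum_{j=0}^N x_{i,j}u_j\Bigr),
\]
viewed in a projective space $\mathbb{P}^{M}(K)$ of forms of degree $m$ in $u_0,\dots,u_N$. Here $M$ depends only on $N$ and $m$.

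By Gauss's lemma at non-archimedean places, and a Mahler-measure (Gelfond-type) inequality at archimedean places, one gets
\[
\bigl|h_K(F_x)-m\,h_{\mathcal{O}(1)_{\mathrm{can}}}(x)\bigr|\leq c(N,m)\,\mu(M_K^\infty).
\]
The constant is uniform because $\mu(M_K^{\infty})$ is finite. This step uses the base-change compatibility \cite[Proposition 3.3.1]{Mor00} to compute $h_{\mathcal{O}(1)_{\mathrm{can}}}(x)$ as an average of local contributions over extensions of the places in $M_K$.

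The map $x\mapsto F_x$ is finite-to-one: $F_x$ determines the conjugate set, by unique factorization of forms over $\overline{K}$. So it suffices to show that $\{y\in\mathbb{P}^{M}(K): h_K(y)\leq C'\}$ is finite.

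\textbf{Step 2: finiteness for $K$-points via the model $B$.} Given $y=[y_0,\dots,y_M]\in\mathbb{P}^M(K)$, I would clear denominators on the normal model $B$. Concretely, I would choose a representative so that the $y_j$ are global sections of a line bundle $\mathcal{L}$ on $B$ (after twisting by the divisor of poles). The non-archimedean part of $h_K(y)$ then equals the degree of the base-locus divisor weighted by $h_M(Y)$. Hence bounded $h_K(y)$ bounds the arithmetic degree $\widehat{\deg}\,c_1(\mathcal{L})\cdot c_1(\mathcal{M})^{d}$, and it also bounds the integrated archimedean sup-type norms of the sections.

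Because $\mathcal{M}$ is ample, boundedness of this intersection number should confine $\mathcal{L}$ to finitely many classes up to the relevant equivalence. I would expect this to go through a Chow-variety or arithmetic bounded-families argument. For each such $\mathcal{L}$, the sections with bounded $L^1$ (or sup) norm form a finite set, since $H^0(B,\mathcal{L})$ is a lattice in a finite-dimensional real space after tensoring with $\mathbb{R}$.

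\textbf{Main obstacle.} I expect Step 2 to be the hard point: passing from a bound on an integrated height over the continuum $M_K^\infty$ to finiteness of lattice points. This requires the arithmetic Hilbert--Samuel/Minkowski-type finiteness of small sections due to Gillet--Soulé, together with controlling the freedom in the choice of $\mathcal{L}$ (the divisor class group of $B$ need not be finite). I would handle that freedom by bounding the degree of the effective divisors involved against the ample $\mathcal{M}$, which leaves finitely many choices. This is exactly the content carried out in \cite[Theorem 4.3]{Mor00}, so in the paper it suffices to cite that result.
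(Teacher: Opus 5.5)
The paper gives no argument for this statement. It is quoted from \cite[Theorem 4.3]{Mor00}, so your closing sentence is exactly the paper's treatment. Your Step 1 is a sound reduction: the Chow form, Gauss's lemma at the places $Y\in B^{(1)}$, Gelfond-type estimates at the places of $M_K^\infty$, and finiteness of $\mu(M_K^\infty)$. The sketch of Step 2, however, has a genuine gap. Nothing in it removes the freedom to rescale all $y_j$ by a common factor. A bound on the total $h_K(y)$ bounds its finite and archimedean parts separately only if both are bounded below, and without a normalization neither of your implications holds.

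\begin{itemize}
\item A bound on the single number $\widehat{\deg}\bigl(\hat c_1(\overline{\mathcal L})\cdot\hat c_1(\overline{\mathcal M})^d\bigr)$ does not confine $\mathcal L$ to finitely many classes. Your later remark about bounding \emph{effective} divisors points the right way, but you never arrange effectivity.
\item A bound on $\int_{B(\mathbb C)}\log\max_j|y_j|\,d\mu$ does not bound the sections. If $\sqrt2\in K$, multiply all $y_j$ by $u^n$ with $u=1+\sqrt2\in H^0(B,\mathcal O_B)^\times$. This changes neither the point nor the integral, but makes the sup norm on one component of $B(\mathbb C)$ arbitrarily large.
\end{itemize}

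Arithmetic Hilbert--Samuel or Minkowski-type results do not address this: they produce small sections, they do not bound sections of small height. Two smaller inaccuracies: the finite part of $h_K(y)$ is the weighted degree of a polar divisor, not of a base-locus divisor; and on a normal $B$ the relevant sheaf is only reflexive.

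The missing idea is to normalize $y_{j_0}=1$ for some $j_0$. Then:

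\begin{itemize}
\item The finite part of $h_K(y)$ is $\sum_Y h_M(Y)\,\mathrm{mult}_Y D$ for the effective divisor $D=\sum_Y\max_j(-\mathrm{ord}_Y y_j)\,Y$. The archimedean part $\int\log\max_j|y_j(b)|\,d\mu(b)$ is nonnegative, so both parts are at most $C$.
\item Arithmetic ampleness of $\mathcal M$ gives $h_M(Y)\geq\log 2$ for vertical $Y$ and $h_M(Y)\geq\varepsilon\deg(Y_{\mathbb Q})$ for horizontal $Y$. Chow forms plus the classical Northcott theorem over $\mathbb Q$ then leave only finitely many horizontal $Y$ with $h_M(Y)\leq C$. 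Hence only finitely many $D$ occur.
\item For fixed $D$, each $y_j$ lies in $H^0(B,\mathcal O_B(D))$ and satisfies $\int_{B_\sigma(\mathbb C)}\log^+|y_j|\,d\mu\leq C$ on every connected component $B_\sigma(\mathbb C)$. A compactness (Jensen-type) argument on each finite-dimensional space of sections over $B_\sigma(\mathbb C)$ shows that such a bound bounds the section there.
\item Since $H^0(B,\mathcal O_B(D))$ is a lattice in the direct sum of these spaces, only finitely many candidates remain.
\end{itemize}

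With these repairs your outline becomes a self-contained proof working directly on $B$. It only covers the paper's special case of an ample metrized $\mathcal M$, whereas \cite[Theorem 4.3]{Mor00} is stated for big polarizations. So describing it as ``exactly the content'' of Moriwaki's theorem overstates the match.
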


Next, we state Roth's theorem over arithmetic function fields. Let $S_0$ be a finite subset of $M_K^0$ and let $S=M_K^{\infty}\sqcup S_0\subseteq M_K$.

\begin{theorem}(Roth's theorem; \cite[Theorem 4.4]{Voj21}) \label{roth}
Let $a_1,\dots,a_q$ be distinct elements of $K$; let $\varepsilon>0$; and let $c\in\mathbb{R}$. Then the inequality
$$
\int_{S}\left(\sum\limits_{j=1}^{q}-\log^{-}\|x-a_j\|_v\right)d\mu(v)\leq(2+\varepsilon)h_K(x)+c
$$
holds for all but finitely many $x\in K$.
\end{theorem}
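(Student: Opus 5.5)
The plan is to follow the classical Thue--Siegel--Roth scheme, transported to the Arakelov setting of subsection 2.1. Every ingredient of Roth's original proof has an analogue there: heights of polynomials, a Siegel lemma, Liouville-type lower bounds, and Roth's lemma on the index. The only structurally new feature is that the set $S$ contains the non-discrete, measure-theoretic family $M_K^{\infty}$ of archimedean places, so all local estimates must be integrated against $\mu$ rather than summed.

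\textbf{Step 1 (reduction).} Argue by contradiction. Suppose there are infinitely many $x\in K$ violating the inequality. By Northcott's property (Theorem \ref{northcott}) we get $h_K(x)\to\infty$ along any infinite sequence. We then normalize the approximation data, which is where the archimedean family is handled:
\begin{enumerate}
\item For each $v\in S$ and each such $x$, at most one $a_j$ can be very close to $x$, since the $a_j$ are distinct. So the integrand is, up to an $L^1$ error controlled by the $a_j$, equal to $-\log^{-}\|x-a_{j(v)}\|_v$ for a single index $j(v)$.
\item The map $v\mapsto j(v)$ partitions $S$ into at most $q$ measurable pieces.
\item Using finiteness of $\mu(M_K^{\infty})$ and the $L^1$-integrability of $\log\|a_i-a_j\|_v$, we approximate this partition by one of finitely many ``types''. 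Passing to a subsequence, we may assume a fixed type, and moreover that the normalized proximity functions $\lambda_v(x)/h_K(x)$ converge weakly.
\end{enumerate}
This replaces Roth's pigeonhole over finitely many places.

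\textbf{Step 2 (auxiliary polynomial).} Take $m$ large and choose approximants $x_1,\dots,x_m$ from the sequence with rapidly increasing heights $h_K(x_{i+1})\geq Ah_K(x_i)$. Using an arithmetic Siegel lemma (Minkowski's theorem on the adelic space of $K$ attached to the polarization $M$, in Moriwaki's form), construct a nonzero polynomial $P(X_1,\dots,X_m)$ with the following properties:
\begin{enumerate}
\item its multidegree is $(r_1,\dots,r_m)$ with $r_ih_K(x_i)$ approximately constant;
\item it has small height;
\item its index at $(a_j,\dots,a_j)$ is large, for each $j$ and weighted according to the type from Step 1.
\end{enumerate}
Taylor expansion at $(a_{j(v)},\dots,a_{j(v)})$, integrated over $v\in S$, then shows that a suitable derivative of $P$ is extremely small at $(x_1,\dots,x_m)$ for all $v\in S$. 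The remaining places are bounded trivially. Combining these via the product formula, the derivative must vanish at $(x_1,\dots,x_m)$ to high index.

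\textbf{Step 3 (Roth's lemma and conclusion).} Roth's lemma is purely algebraic and height-theoretic. It bounds the index of a polynomial of small height at a point with widely separated heights. It carries over verbatim once one has a Gelfond-type inequality for heights of products of polynomials, which is again an integration of local Gelfond/Mahler inequalities over $M_K$. Roth's lemma contradicts the lower bound on the index from Step 2 as soon as the exponent exceeds $2+\varepsilon$, which finishes the proof.

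I expect the main obstacle to be Step 1 together with the archimedean half of Step 2. Archimedean places form a continuum, and the integrand $-\log^{-}\|x-a_j\|_v$ is only $L^1$, not bounded. Controlling uniformity of the Taylor-expansion estimates over $M_K^{\infty}$, and making the ``finitely many types'' approximation rigorous, is where real work is needed. My first attempt would use Egorov-type truncation: discard a subset of $S$ of small measure on which the estimates fail, and absorb its contribution into $\varepsilon h_K(x)$ using the integrability bounds.
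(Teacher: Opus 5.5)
The paper gives no proof of this statement. It is imported from Vojta \cite[Theorem 4.4]{Voj21}, so the only question is whether your sketch is a proof. It is not yet one: there is a genuine gap exactly where the arithmetic function field case departs from the number field case, namely in replacing the pigeonhole over the finitely many places of $S$.

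\textbf{What Step 2 needs.} Write $\phi_{x,j}(v)=-\log^-\|x-a_j\|_v/h_K(x)$ and take weights $r_i$ with $r_ih_K(x_i)\approx D$. Suppose a derivative $\partial P$ has index at least $\theta m$ at every $(a_j,\dots,a_j)$. Taylor expansion at $(a_j,\dots,a_j)$ bounds $\log\|\partial P(x_1,\dots,x_m)\|_v$ by the trivial estimate minus $\min\{\sum_i k_i(-\log^-\|x_i-a_j\|_v):\ 0\le k_i\le r_i,\ \sum_i k_i/r_i\ge\theta m\}$. This saving is essentially $D$ times the sum of the $\lfloor\theta m\rfloor$ smallest of the numbers $\phi_{x_1,j}(v),\dots,\phi_{x_m,j}(v)$. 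The product formula yields a contradiction only if the integral over $S$ of the maximum over $j$ of this quantity exceeds roughly $\theta m(2+\varepsilon')D$, with $\theta(2+\varepsilon')>1$. Only this integrated estimate matters; pointwise smallness ``for all $v\in S$'' is neither available nor needed.

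The individual bounds $\int_S\sum_j\phi_{x_i,j}\,d\mu\ge 2+\varepsilon$ do not give this. Nonnegative functions with large integrals can have essentially disjoint supports, and then the saving is about $0$. What you actually need is that the chosen $x_1,\dots,x_m$ have profiles $(\phi_{x_i,j})_j$ that are \emph{close in $L^1(S)$}. With finitely many places, this is precisely what the pigeonhole supplies.

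\textbf{Why Step 1(iii) does not supply it.}
\begin{enumerate}
\item No finite set of types. Once $K$ is not a number field, $\mu$ is non-atomic on $M_K^{\infty}$. Measurable partitions of a non-atomic space are not compact, not even up to small symmetric difference (think of Rademacher sets). The finiteness of $\mu(M_K^{\infty})$ and the integrability of the fixed functions $\log\|a_i-a_j\|_v$ say nothing about how the $x$-dependent partitions behave.
\item No weak $L^1$ limit. The family $(\phi_x)$ is only bounded in $L^1$, so a subsequence converges at best as measures, possibly to a singular limit. Weak $L^1$ convergence would require uniform integrability (Dunford--Pettis).
\item Weak convergence would not help anyway. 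The functional ``integral of the sum of the $\lfloor\theta m\rfloor$ smallest values'' is concave, hence only weakly upper semicontinuous, whereas you need a lower bound. For example, $\phi_n=(2+\varepsilon)(1+\rho_n)$ with $\rho_n$ the Rademacher functions converges weakly to a constant. Yet for $\theta<1/2$ and $m$ large, the relevant saving for $m$ of them is $o(m)$.
\item Egorov truncation fails. Discarding a set $E$ of small measure costs $\int_E\phi_x\,d\mu$. This is not uniformly small in $x$ without uniform integrability, and $-\log^-\|x-a_j\|_v$ is unbounded on $M_K^{\infty}$. Egorov also presupposes pointwise convergence, which you do not have.
\end{enumerate}

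\textbf{What is missing.} You need a genuine analytic lemma about archimedean proximity functions, strong enough to rule out both concentration and oscillation. One candidate is $L^1$-precompactness of the normalized functions $h_K(x)^{-1}\log|x(b)-a_j(b)|$ on $B(\mathbb{C})$, which one might hope to extract from pluripotential theory. Alternatively, some other device replacing the pigeonhole would do. Separately, the Siegel lemma over $K$ and Roth's lemma (with a Gelfond inequality integrated over $M_K$) are only asserted to carry over, not checked.

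As it stands the proposal is a plan rather than a proof. In the context of this paper, the statement is simply taken from \cite{Voj21}.
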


Finally, we state Siegel's theorem on integral points on curves due to Lang. The statement as follows is an immediate corollary of \cite[Corollary 4.11]{Voj21}.

\begin{theorem}\label{siegel}
Let $R\subseteq K$ be a finitely generated subring over $\mathbb{Z}$. Let $C\subseteq\mathbb{A}_K^N$ be an irreducible closed subcurve. Suppose $C(K)\cap R^N$ is infinite. Then with the notation as in Setting \ref{setting}, the set $(\overline{C}\cap H_{\infty})(\overline{K})$ contains at most two points.
\end{theorem}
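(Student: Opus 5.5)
The plan is to deduce the statement from Vojta's general Siegel-type result, \cite[Corollary 4.11]{Voj21}. That result says, roughly, that if a curve has infinitely many $S$-integral points with respect to a divisor $D$, then the affine curve is not hyperbolic: its Euler characteristic condition fails. Concretely, if $\widetilde{C}$ is the normalization of $\overline{C}$ and $D$ is the boundary, then the number of points of $D$ is at most $2$ when the genus is $0$, and $D$ is empty when the genus is positive.

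First, choose $S$ so that $R$-points become $S$-integral. Since $R$ is finitely generated over $\mathbb{Z}$, we can pick a normal arithmetic model $B$ of $K$, after shrinking or enlarging as needed, such that the generators of $R$ have poles along only finitely many prime divisors $Y\in B^{(1)}$. Let $S_0$ be this finite set and put $S=M_K^{\infty}\sqcup S_0$. Then every $x\in R^N$ satisfies $\|x_i\|_v\le 1$ for all $v\notin S$. Hence the points of $C(K)\cap R^N$ are $(D,S)$-integral on $\overline{C}\subseteq\mathbb{P}^N_K$, where $D=\overline{C}\cap H_\infty$: their Weil function with respect to the hyperplane $H_\infty$, namely $\log\max_i\|x_i\|_v$, vanishes outside $S$.

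Next, pass to the normalization $\nu:\widetilde{C}\to\overline{C}$. Integrality is preserved by pullback, with the Weil function of $\nu^*H_\infty$ equal to the pullback of the Weil function of $H_\infty$. So we obtain infinitely many $S$-integral $K$-points of $\widetilde{C}\setminus\nu^{-1}(H_\infty)$. By \cite[Corollary 4.11]{Voj21}, the arithmetic-function-field Siegel theorem, which Vojta proves via Roth's theorem \ref{roth} together with his Thue–Siegel style argument, this forces $\widetilde{C}$ to have genus $0$ and $|\nu^{-1}(H_\infty)(\overline{K})|\le 2$. Since $\nu$ is surjective, $|(\overline{C}\cap H_\infty)(\overline{K})|\le|\nu^{-1}(H_\infty)(\overline{K})|\le 2$. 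Finally, the intersection is nonempty because $\overline{C}$ is a projective curve and cannot avoid a hyperplane; this is not needed for the statement, but it is useful later.

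The main obstacle is the bookkeeping needed to match the hypotheses of \cite[Corollary 4.11]{Voj21}. One must verify that a finitely generated ring $R$ is contained in a ring of $S$-integers for a finite $S_0$ in the measure-theoretic sense, and in particular that the archimedean part $M_K^{\infty}$ is already included. One must also handle points whose boundary fiber over $\overline{K}$ is not defined over $K$. The latter is harmless, because Vojta's count is geometric: it counts points over $\overline{K}$.
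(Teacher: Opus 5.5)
Your proposal is correct and takes the same approach as the paper, which states the result as an immediate corollary of \cite[Corollary 4.11]{Voj21} with no further argument. Your extra bookkeeping fills in what that citation leaves implicit: you put the pole divisors of the generators of $R$ into $S_0$ so that points of $R^N$ are $S$-integral, and you use surjectivity of the normalization to bound $|(\overline{C}\cap H_{\infty})(\overline{K})|$.
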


With the hypothesis as above, Siegel's theorem also asserts that $C$ is a rational curve. For our purpose, we pay more attention to the ``at most two points at infinity" part of the theorem.

\subsection{Proof of Proposition \ref{bound e}}

The first-named author asked a question about whether ``the ramifications of a general endomorphism of the projective space should be mild" on MathOverflow \cite{LittleBear}, and the second-named author posted the accepted answer on the webpage. In this subsection, we arrange the answer into a proof of Proposition \ref{bound e}.

Firstly, let us clarify the meaning of a ``general endomorphism". Let $d$ and $N$ be positive integers. Let $\mathrm{Hom}_d(\mathbb{P}^N)$ be the set of endomorphisms of $\mathbb{P}^N$ of algebraic degree $d$. We know that such an endomorphism is determined by $N+1$ homogeneous polynomials of degree $d$, in which there are $N'=(N+1)\binom{N+d}{d}$ coefficients in total. Thus we get a set injection $\mathrm{Hom}_d(\mathbb{P}^N)\hookrightarrow\mathbb{P}^{N'-1}(\mathbb{C})$. It is a fact that the image is open in Zariski topology, and hence in this way we can regard $\mathrm{Hom}_d(\mathbb{P}^N)$ as a quasi-projective variety. We say \emph{a general endomorphism in} $\mathrm{Hom}_d(\mathbb{P}^N)$ \emph{has a property} $\mathcal{P}$ if there exists an open dense subset $U\subseteq\mathrm{Hom}_d(\mathbb{P}^N)$ such that every $f\in U(\mathbb{C})$ satisfies $\mathcal{P}$.

For an endomorphism $f\in\mathrm{Hom}_d(\mathbb{P}^N)(\mathbb{C})$, we denote $e(f)=\max\limits_{x\in\mathbb{P}^N(\mathbb{C})}e_f(x)$. The definition of the multiplicities $e_f(x)$ can be found before Theorem \ref{main}. We have $1\leq e(f)\leq d^N$ for every $f$. We prove that the function $e$ is well-behaved.

\begin{proposition}\label{2.6}
The function $f\mapsto e(f)$ on $\mathrm{Hom}_d(\mathbb{P}^N)(\mathbb{C})$ is upper semicontinuous with respect to the Zariski topology.
\end{proposition}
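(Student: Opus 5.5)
We will show that for each integer $m\geq1$ the set $Z_m=\{f\in\mathrm{Hom}_d(\mathbb{P}^N)(\mathbb{C})\mid e(f)\geq m\}$ is Zariski closed. The plan is to realize $Z_m$ as the image under a proper projection of a closed subset of an incidence variety. Set $H=\mathrm{Hom}_d(\mathbb{P}^N)$ and let $\Phi:H\times\mathbb{P}^N\rightarrow H\times\mathbb{P}^N$, $(f,x)\mapsto(f,f(x))$, be the universal morphism over $H$. Every $f\in H(\mathbb{C})$ is finite of degree $d^N$, so $\Phi$ is a finite morphism: it is proper, being a morphism of projective $H$-schemes, and quasi-finite. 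Both source and target are smooth, so by miracle flatness $\Phi$ is flat. Hence $\mathcal{A}=\Phi_*\mathcal{O}_{H\times\mathbb{P}^N}$ is a locally free sheaf of rank $d^N$ on $H\times\mathbb{P}^N$.

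For a point $(f,y)$, the fiber $\mathcal{A}\otimes\kappa(f,y)$ is the Artin ring $\mathcal{O}_{f^{-1}(y)}$, and it decomposes as $\prod_{x\in f^{-1}(y)}\mathcal{O}_{\mathbb{P}^N,x}/f^*\mathfrak{m}_y$. Its local factors have lengths $e_f(x)$. We therefore need to show that the locus $W_m\subseteq H\times\mathbb{P}^N$ where the fiber has some local factor of length $\geq m$ is closed. Once this is done, $Z_m$ is the projection of $W_m$ to $H$, which is closed because $\mathbb{P}^N$ is proper.

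To see that $W_m$ is closed, we use the source side instead. Consider the subscheme $\Gamma_m\subseteq H\times\mathbb{P}^N$ consisting of points $(f,x)$ with $e_f(x)\geq m$. We can characterize it as follows: $e_f(x)=\dim_{\mathbb{C}}\mathcal{O}_x/f^*\mathfrak{m}_{f(x)}$. For a finite flat morphism, the function $x\mapsto$ (length of the local factor of the fiber at $x$) is upper semicontinuous on the source. The standard argument goes like this. Near $x_0$ with $e=e_0$, choose an affine neighborhood and decompose the finite flat algebra étale-locally (Hensel) over a neighborhood of $\Phi(x_0)$ as $\mathcal{A}=\mathcal{A}'\times\mathcal{A}''$, where $\mathcal{A}'$ is the factor containing $x_0$ and has rank $e_0$. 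Points $x$ near $x_0$ lie in $\mathrm{Spec}\,\mathcal{A}'$, and their local multiplicity is at most $\mathrm{rank}\,\mathcal{A}'=e_0$. Hence $\{e\geq m\}$ is closed in the source. Then $Z_m$ is the image of $\Gamma_m$ under the proper projection $H\times\mathbb{P}^N\to H$, so it is closed.

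The main obstacle is the semicontinuity step. The flatness of $\Phi$ and the local decomposition of a finite flat algebra must be handled carefully. As an alternative, one could argue purely algebraically: $e_f(x)\geq m$ if and only if $\dim\mathcal{O}_x/(f^*\mathfrak{m}_{f(x)}+\mathfrak{m}_x^{m})\geq\min(m,\cdot)$. This is a rank condition on explicit matrices depending polynomially on $(f,x)$, because $\mathcal{O}_x/f^*\mathfrak{m}$ has length $\geq m$ if and only if its truncation modulo $\mathfrak{m}_x^{m}$ has length $\geq m$, since the ideal is $\mathfrak{m}_x$-primary. Such a rank condition defines a closed set in local charts. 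I would use this truncation criterion if the flatness route becomes cumbersome.
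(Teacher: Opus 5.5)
Your proposal is correct and follows the paper's proof: the paper uses the same universal map $F(x,f)=(f(x),f)$ on $\mathbb{P}^N\times B$, notes that it is finite and flat with $e_F((x,f))=e_f(x)$, applies upper semicontinuity of multiplicities for finite flat morphisms (Lemma \ref{2.7}, cited from \cite{LJT74}) to get closed sets $Y_M$, and projects along the proper factor $\mathbb{P}^N$. The only difference is that you sketch a proof of that semicontinuity lemma via \'etale-local splitting of the finite flat algebra, whereas the paper simply cites it; your backup truncation criterion, stated correctly as $\dim_{\mathbb{C}}\mathcal{O}_x/(f^*\mathfrak{m}_{f(x)}+\mathfrak{m}_x^{m})\geq m$, also works and does not even need flatness.
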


Before starting the proof, we recall the following result due to \cite[Corollary 4.8]{LJT74}. See also \cite[Lemma C.2]{Mat25}.

\begin{lemma}\label{2.7}
Let $f:X\rightarrow Y$ be a finite flat morphism of varieties. Then the function $x\mapsto e_f(x)$ is upper semicontinuous on $X$.
\end{lemma}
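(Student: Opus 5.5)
We prove that for every $m$ the set $\{x : e_f(x)\ge m\}$ is closed, by showing that $e_f$ cannot jump up under specialization. We work first with closed points. Since the base field is $\mathbb{C}$, residue fields at closed points are $\mathbb{C}$, so $e_f(x)=\dim_{\mathbb{C}}\mathcal{O}_{X,x}/\mathfrak{m}_{f(x)}\mathcal{O}_{X,x}$. Because $f$ is finite and flat, $f_*\mathcal{O}_X$ is locally free of some rank $n$. Hence for each closed $y\in Y$ the fibre $X_y=\mathrm{Spec}(f_*\mathcal{O}_X\otimes k(y))$ is an Artinian $\mathbb{C}$-algebra of dimension $n$. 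It splits as the product of its localizations at the points $x_1,\dots,x_r$ over $y$, which gives $\sum_i e_f(x_i)=n$.

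The key step is to separate the points of a fibre locally. Fix a closed $y\in Y$. We pass to an étale neighbourhood $(Y',y')\to(Y,y)$, or to the henselization $\mathcal{O}^h_{Y,y}$, over which $X'=X\times_YY'$ decomposes as a disjoint union $X'=U_1\sqcup\dots\sqcup U_r\sqcup U''$, with the following properties. Each $U_i$ contains exactly one point $x_i'$ above $y'$, lying over $x_i$, and $U''$ has empty fibre over $y'$. This uses the standard fact that a finite algebra over a henselian local ring is a product of local rings, together with a limit argument to descend to an actual étale neighbourhood. Each $U_i\to Y'$ is again finite and flat, and its rank is computed on the fibre over $y'$, giving rank $e_f(x_i)$. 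Multiplicities are unchanged under étale base change, since $\mathcal{O}_{X',x'}/\mathfrak{m}_{y'}\mathcal{O}_{X',x'}$ is an étale-local base change of $\mathcal{O}_{X,x}/\mathfrak{m}_y\mathcal{O}_{X,x}$ with trivial residue extension at closed points over $\mathbb{C}$.

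Now the semicontinuity follows. The image $W$ of $U_i$ in $X$ is an open neighbourhood of $x_i$, because étale maps are open. For any closed point $z\in W$, choose a preimage $z'\in U_i$. The fibre of $U_i\to Y'$ over $f'(z')$ has total length $e_f(x_i)$ and contains $z'$, so $e_f(z)=e_{f'}(z')\le e_f(x_i)$. Thus on closed points the set $\{e_f<m\}$ is open for every $m$.

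To pass to all scheme points, note that the closed points are dense in every closed subset of a variety over $\mathbb{C}$. The loci $\{e_f\ge m\}$ are constructible, since they are cut out by rank conditions on the multiplication maps of the finite algebra $f_*\mathcal{O}_X$. One defines $e_f$ at non-closed points via the generic fibre with the residue degree taken into account, so that $e_f(x)\cdot[k(x):k(f(x))]$ is the relevant length. The main obstacle is the local decomposition in the key step: producing a genuine étale neighbourhood (not just the henselization) on which the fibre separates into the components $U_i$, and checking that $e_f$ is invariant under that base change. I would handle this by citing the structure theorem for quasi-finite morphisms over henselian local rings (EGA IV, 18.5.11), and then spreading out over a finite-type étale neighbourhood.
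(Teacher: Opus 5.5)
The paper does not prove this lemma; it cites \cite[Corollary 4.8]{LJT74} and \cite[Lemma C.2]{Mat25}. Your \'etale-local argument at closed points is a correct, self-contained replacement for that citation. The steps all hold:
\begin{itemize}
\item The fibre over $y$ is $\prod_i \mathcal{O}_{X,x_i}/\mathfrak{m}_y\mathcal{O}_{X,x_i}$.
\item An \'etale neighbourhood with $k(y')=k(y)=\mathbb{C}$ has $X'_{y'}\cong X_y$.
\item Lengths are preserved because $\mathcal{O}_{X,z}\to\mathcal{O}_{X',z'}$ is flat local with $\mathfrak{m}_z\mathcal{O}_{X',z'}=\mathfrak{m}_{z'}$ and $\mathfrak{m}_{y'}=\mathfrak{m}_y\mathcal{O}_{Y',y'}$.
\item Openness of \'etale maps gives the neighbourhood $W$.
\end{itemize}
One small point: you should shrink $Y'$ to a connected neighbourhood of $y'$ so that $U_i\to Y'$ has constant rank $e_f(x_i)$. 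Note also that closed points are all the paper actually uses. In Proposition \ref{2.6}, a closed point of $B$ lies in $\mathrm{pr}_B(Z)$, for $Z$ closed, iff it is the image of a closed point of $Z$.

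Your last paragraph, however, does not establish the statement at non-closed points. The constructibility of $\{e_f\ge m\}$ ``by rank conditions on multiplication maps'' is asserted, not shown. More seriously, you should not redefine $e_f$. The paper's $e_f(x)$ is already the length of $\mathcal{O}_{X,x}/\mathfrak{m}_{f(x)}\mathcal{O}_{X,x}$ at every point. If the residue degree were folded in, i.e.\ $e_f(x)[k(x):k(f(x))]$, the statement would be false. For $z\mapsto z^2$ on $\mathbb{A}^1$, the generic point would get $2$ while all closed points $z\neq0$ get $1$, so $\{e\ge 2\}$ would not be closed.

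The clean fix needs no constructibility at all: run your key step at an arbitrary point $x$ over $y$.
\begin{itemize}
\item Choose the \'etale neighbourhood $(Y',y')$ so that $k(y')$ contains a normal closure of the residue fields of all points of $X_y$. This is possible because these are finite separable extensions in characteristic $0$.
\item Then every point $x'$ of $X'_{y'}$ lying over $x$ has $k(x')=k(y')$ and local length $e_f(x)$, by the same flatness and unramifiedness argument.
\item Hence its clopen piece $U$ is finite flat of rank $e_f(x)$ over $Y'$.
\item For any $z'\in U$, the fibre identity $\sum_{w} e(w)\,[k(w):k(f'(z'))]=e_f(x)$, summed over points $w$ of $U$ above $f'(z')$, gives $e_f(z)=e_{f'}(z')\le e_f(x)$.
\end{itemize}
This holds on the open image of $U$ in $X$, which is exactly upper semicontinuity at every scheme point.
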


\proof[Proof of Proposition \ref{2.6}]
We write $B=\mathrm{Hom}_d(\mathbb{P}^N)$ for simplicity. We consider the morphism $F:\mathbb{P}^N\times B\rightarrow\mathbb{P}^N\times B$ defined by the requirement $F(x,f)=(f(x),f)$ on closed points. One can check that $F$ is indeed a morphism of varieties. We see that $F$ is a $B$-morphism. For every $f\in B(\mathbb{C})$, the map $F_f$ on the fiber is just the endomorphism $f$ of $\mathbb{P}^N$. Thus $F$ is quasi-finite and proper, hence finite. It is also flat as $\mathbb{P}^N\times B$ is smooth. Moreover, for every closed point $(x,f)$ in $\mathbb{P}^N\times B$, we can see that $e_F((x,f))=e_f(x)$ by base change to the closed fiber.

For every integer $M$, we denote $Y_M=\{y\in\mathbb{P}^N\times B|\ e_F(y)\geq M\}$. Lemma \ref{2.7} guarantees that $Y_M$ is a closed subset of $\mathbb{P}^N\times B$. Hence the image $\mathrm{pr}_B(Y_M)$ under the projection $\mathrm{pr}_B:\mathbb{P}^N\times B\rightarrow B$ is a closed subset of $B$. But we can see that for an endomorphism $f\in B(\mathbb{C})$, the condition $e(f)\geq M$ is equivalent to saying that $f\in\mathrm{pr}_B(Y_M)(\mathbb{C})$. Hence the result follows.
\endproof

Proposition \ref{2.6} guarantees that a general $f\in\mathrm{Hom}_d(\mathbb{P}^N)(\mathbb{C})$ satisfies $e(f)=\min\limits_{f\in\mathrm{Hom}_d(\mathbb{P}^N)(\mathbb{C})}e(f)$. So our task becomes to find a special $f$ which has a relatively small $e(f)$.

\begin{lemma}\label{2.8}
There exists an endomorphism $f\in\mathrm{Hom}_d(\mathbb{P}^N)(\mathbb{C})$ such that $e(f)\leq N!\cdot2^N$.
\end{lemma}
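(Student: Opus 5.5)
The plan is to build the required endomorphism from a one-dimensional map with only simple ramification, using the symmetric product $\mathrm{Sym}^N\mathbb{P}^1\cong\mathbb{P}^N$. For $d=1$ any automorphism has $e(f)=1$, so assume $d\geq2$. Fix a rational map $g:\mathbb{P}^1\rightarrow\mathbb{P}^1$ of degree $d$ all of whose $2d-2$ critical points are simple, for instance a general one. Then $e_g(t)\leq2$ for every $t\in\mathbb{P}^1(\mathbb{C})$.

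Let $G=g\times\cdots\times g$ on $(\mathbb{P}^1)^N$. Locally $G$ is a product of one-variable maps, so the local ring of its fiber is a tensor product of the local rings of the fibers of $g$. Hence
$$
e_G(y)=\prod_{i=1}^N e_g(y_i)\leq 2^N .
$$

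Next identify $\mathbb{P}^N$ with the space of binary forms of degree $N$ up to scalars. Let $\pi:(\mathbb{P}^1)^N\rightarrow\mathbb{P}^N$ send $(t_1,\dots,t_N)$ to the form with roots $t_1,\dots,t_N$. This is the quotient by $S_N$, a finite flat morphism of degree $N!$ between smooth varieties. Since $G$ commutes with the $S_N$-action, it descends to an endomorphism $f$ of $\mathbb{P}^N$ with $f\circ\pi=\pi\circ G$.

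Concretely, write $g=[P:Q]$ with $P,Q$ binary forms of degree $d$. A form $F=\prod_i(\beta_i s-\alpha_i t)$ is sent to $\prod_i\bigl(Q(\alpha_i,\beta_i)s-P(\alpha_i,\beta_i)t\bigr)$. Its coefficients are symmetric in the $(\alpha_i,\beta_i)$ and multihomogeneous of degree $d$ in each pair. So they are polynomials of degree $d$ in the coefficients of $F$, by the fundamental theorem on multisymmetric functions of binary forms. These polynomials have no common zero because $f$ is a well-defined morphism, since $G$ is. The routine check to carry out here is that the algebraic degree is exactly $d$, so that $f\in\mathrm{Hom}_d(\mathbb{P}^N)(\mathbb{C})$.

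The main point is to compare multiplicities through the commutative square. I will use the multiplicativity lemma: for finite flat morphisms $X\xrightarrow{u}Y\xrightarrow{v}Z$ of varieties and $x\in X$,
$$
e_{v\circ u}(x)=e_u(x)\,e_v(u(x)).
$$
This follows from the flat local homomorphisms $\mathcal{O}_{Z,z}\rightarrow\mathcal{O}_{Y,y}\rightarrow\mathcal{O}_{X,x}$. Since $\mathcal{O}_{X,x}/\mathfrak{m}_z\mathcal{O}_{X,x}$ is flat over $\mathcal{O}_{Y,y}/\mathfrak{m}_z\mathcal{O}_{Y,y}$, its length equals $\mathrm{length}(\mathcal{O}_{Y,y}/\mathfrak{m}_z)\cdot\mathrm{length}(\mathcal{O}_{X,x}/\mathfrak{m}_y\mathcal{O}_{X,x})$. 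Flatness of $G$, $\pi$ and $f$ holds because they are finite maps between smooth varieties of the same dimension (miracle flatness).

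Given $x\in\mathbb{P}^N(\mathbb{C})$, choose $y$ with $\pi(y)=x$. Then
$$
e_\pi(y)\,e_f(x)=e_{f\circ\pi}(y)=e_{\pi\circ G}(y)=e_G(y)\,e_\pi(G(y))\leq 2^N\cdot N!,
$$
using $e_\pi\leq\deg\pi=N!$. Since $e_\pi(y)\geq1$, this gives $e_f(x)\leq N!\cdot2^N$, which is the claim $e(f)\leq N!\cdot2^N$.

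I expect the only delicate step to be the multiplicativity of $e$ under composition, which requires the flatness just discussed. The identification of the descended map as a degree-$d$ endomorphism is classical and only needs care with the multisymmetric-function argument.
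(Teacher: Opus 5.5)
Your proposal is correct and follows the paper's proof essentially step for step. Both arguments descend $g\times\cdots\times g$ through the symmetric-product quotient $\pi:(\mathbb{P}^1)^N\rightarrow\mathbb{P}^N$ and then bound $e_f$ using multiplicativity of $e$ under composition of finite flat maps, together with $e_{g^N}\leq 2^N$ and $e_\pi\leq\deg\pi=N!$. The only differences are cosmetic: you take $g$ with all critical points simple where the paper uses a Lattès map, and you also verify the flatness, the algebraic degree of $f$, and the multiplicativity formula, which the paper leaves implicit or cites.
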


\begin{proof}
The assertion is easy to be verified when $N=1$. Indeed, certain Latt\`es maps satisfy the requirement. Let $g$ be such an endomorphism of $\mathbb{P}^1$ of degree $d$. We have $e(g)=2$. Since $\mathbb{P}^N$ is the quotient of $(\mathbb{P}^1)^N$ by the symmetric group $S_N$ which acts as permuting the components, we can get an endomorphism $f\in\mathrm{Hom}_d(\mathbb{P}^N)(\mathbb{C})$ induced by the endomorphism $g^N$ of $(\mathbb{P}^1)^N$. More precisely, we get such an endomorphism $f$ that satisfies $\pi\circ g^N=f\circ\pi$, where $\pi$ denotes the quotient map $(\mathbb{P}^1)^N\rightarrow\mathbb{P}^N$. Therefore, we have
\begin{align*}
e(f)
&\leq\max\limits_{y\in(\mathbb{P}^1)^N(\mathbb{C})}e_{f\circ\pi}(y)=\max\limits_{y\in(\mathbb{P}^1)^N(\mathbb{C})}e_{\pi\circ g^N}(y)\leq\max\limits_{y\in(\mathbb{P}^1)^N(\mathbb{C})}e_{g^N}(y)\cdot\max\limits_{y\in(\mathbb{P}^1)^N(\mathbb{C})}e_{\pi}(y) \\
&\leq e(g)^N\cdot\mathrm{deg}(\pi)=N!\cdot2^N.
\end{align*}
Here we use the formula ``$e_{g\circ f}(x)=e_f(x)e_g(f(x))$" for finite flat morphisms in \cite[Lemma C.3]{Mat25}.
\end{proof}

\proof[Proof of Proposition \ref{bound e}]
Combining Proposition \ref{2.6} and Lemma \ref{2.8}, we get the assertion immediately.
\endproof

\begin{remark}
\begin{enumerate}
\item
It is also interesting to consider the behavior of the function $f\mapsto\max\limits_{x\in\mathrm{Per}(f)}e_f(x)$ on $\mathrm{Hom}_d(\mathbb{P}^N)(\mathbb{C})$, which is closer to our Condition \ref{conditon k}. However, this function seems to have a worse behavior. Indeed, we believe that $\max\limits_{x\in\mathrm{Per}(f)}e_f(x)=1$ for a very general $f$, and one cannot change ``very general" into ``general". Hence this function should not be upper semicontinuous.

\item
Our bound $N!\cdot2^N$ in Proposition \ref{bound e} should be far from optimal. We suspect that the optimal bound is $N+1$ for small $N$, while in general the growth of the bound may be at least as $2^{\sqrt{N}}$. This is because every endomorphism of $\mathbb{P}^N$ seems to have a point at which the rank of the Jacobian matrix is at most $N-\lfloor\sqrt{N}\rfloor$. See the discussion in \cite{LittleBear}. It is interesting to calculate the actual optimal bound for small $N$.
\end{enumerate}
\end{remark}

\section{Some inequalities of local heights}

In this section, we introduce a basic notion of local heights (i.e. Weil functions) on $\mathbb{P}^N$. Over number fields, the theory of local heights is standard, which measures the degree of closeness of a point with a fixed closed subscheme in a projective variety. For example, see \cite[Chapter 10]{Lan83} and \cite{Sil87}. In this paper, for our purpose, we need this notion for rational points in $\mathbb{P}^N$ as well as the infinity hyperplane in $\mathbb{P}^N$ over arithmetic function fields. In the setting of arithmetic function fields, the theory of local heights for Cartier divisors in projective varieties was developed in \cite[Subsection 3D]{Voj21}. But it seems that the theory for closed subschemes of higher codimension has not been developed yet. We will also not try to develop a general theory in this article. Instead, we will only give some definitions and prove some inequalities that will be used in the sequel. We restrict ourselves in $\mathbb{P}^N$ and work with its natural coordinates in order to maintain an elementary and concrete flavor of our article.

We inherit the setting from subsection 2.1. Let $K$ be an arithmetic function field and let $M=(B;\mathcal{M})$ be a big polarization of $K$. Then we get a measure space $M_K$ of absolute values on $K$.

From now on, we \emph{fix} a finite set $S_0\subseteq M_K^0$, and denote $S=M_K^{\infty}\sqcup S_0$. Notice that $S$ has finite measure. We will work with this fixed set of places $S$ in the following of this section. As a result, according to the notion of \cite[Definition 3.22]{Voj21}, our ``Weil functions" in this section are indeed ``partial Weil functions over $S$". But we still call them Weil functions for short.

The concept of $S$-\emph{constants} will play a key role in the definition of Weil functions. The following definition is a variant of \cite[Definition 3.18]{Voj21}.

\begin{definition}\label{3.1}
An $S$-\emph{constant} is a measurable, $L^1$ function from $S$ to $\mathbb{R}_{\geq0}$. An $S$-constant is usually denoted as $(c_v)_{v\in S}$.
\end{definition}

\begin{remark}\label{3.2}
The sum and maximum of two $S$-constants is an $S$-constant. As mentioned in \cite[Remark 3.20]{Voj21}, the function $(|\log\|a\|_v|)_{v\in S}$ is an $S$-constant for all $a\in K^{\times}$. Therefore, for every $a_1,\dots,a_n\in K$ and every $c\in\mathbb{R}_{\geq0}$, the function $(\log^{+}\|a_1\|_v+\cdots+\log^{+}\|a_n\|_v+c)_{v\in S}$ is also an $S$-constant as $S$ has finite measure. Indeed, every $S$-constant in the sequel will essentially have this form. We often implicitly use this remark to guarantee that a certain formula indeed defines an $S$-constant.
\end{remark}

Now we start to define the Weil functions in $\mathbb{P}^N$ for rational points and the infinity hyperplane. The base field will always be $K$ in this section. We rely on the standard coordinate system $[x_0,\dots,x_N]$ on $\mathbb{P}^N$.

We inherit the notation in Setting \ref{setting}. Namely, we write $\mathbb{A}^N$ as the open subset $\{x_0\neq0\}$ and write $H_{\infty}$ as the closed subset $\{x_0=0\}$ of $\mathbb{P}^N$, respectively.

We remark that although some first properties in the following will be easy to experts, we choose to give detailed proofs of almost all the statements. This is because the $L^1$-integrable condition in the definition of $S$-constants deserves some attention. See Appendix for the proofs of the lemmas in this section.

We begin with the definition of Weil functions for rational points. Let $P=[a_0,\dots,a_N]\in\mathbb{P}^N(K)$.

For $i\in\{0,\dots,N\}$, if the coordinate $a_i\neq0$, then we define
$$
\lambda_P^{i}(v,x)=\left\{
\begin{array}{cc}
-\log^{-}\max\limits_{0\leq j\leq N,j\neq i}\|\frac{x_j}{x_i}-\frac{a_j}{a_i}\|_v, & x_i\neq0 \\
0, & x_i=0
\end{array}
\right.
$$
for $v\in S$ and $x=[x_0,\dots,x_N]\in\mathbb{P}^N(K)$. So $\lambda_P^i$ is a map from $S\times\mathbb{P}^N(K)$ to $\mathbb{R}_{\geq0}\cup\{+\infty\}$ which takes value $+\infty$ if and only if $x=P$. Notice that we define $\lambda_P^i$ only when the coordinate $a_i\neq0$.

We prove that the function $\lambda_P^i$ is essentially independent of the choice of $i$, in the sense that two different choices can bound each other by an $S$-constant. In the following, for a subset $X\subseteq\mathbb{P}^N(K)$ (which will be either $\mathbb{A}^N(K)$ or $\mathbb{P}^N(K)$ in practice), we say that two functions $\lambda_1,\lambda_2:S\times X\rightarrow\mathbb{R}_{\geq0}\cup\{+\infty\}$ \emph{can bound each other by an} $S$-\emph{constant} if there is an $S$-constant $(c_v)_{v\in S}$ such that $\lambda_1(v,x)\leq\lambda_2(v,x)+c_v$ and $\lambda_2(v,x)\leq\lambda_1(v,x)+c_v$ hold for every $v\in S$ and every $x\in X$.

\begin{lemma}\label{3.3}
Let $P=[a_0,\dots,a_N]\in\mathbb{P}^N(K)$. Suppose that $a_ia_j\neq0$ for two subscripts $i,j$. Then the functions $\lambda_P^i,\lambda_P^j:S\times\mathbb{P}^N(K)\rightarrow\mathbb{R}_{\geq0}\cup\{+\infty\}$ can bound each other by an $S$-constant.
\end{lemma}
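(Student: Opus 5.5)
By symmetry it suffices to find an $S$-constant $(c_v)$ with $\lambda_P^i(v,x)\leq\lambda_P^j(v,x)+c_v$ for all $v\in S$ and $x\in\mathbb{P}^N(K)$. Write $\alpha_l=a_l/a_i$ and $\beta_l=a_l/a_j$, and put $\delta_v(x)=\max_{l\neq i}\|x_l/x_i-\alpha_l\|_v$ when $x_i\neq0$, and $\delta'_v(x)=\max_{l\neq j}\|x_l/x_j-\beta_l\|_v$ when $x_j\neq0$. Then $\lambda_P^i=-\log^-\delta_v$ and $\lambda_P^j=-\log^-\delta'_v$.

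The natural candidate is
$$c_v=\log 4+2\bigl(|\log\|\alpha_j\|_v|+\textstyle\sum_{l}\log^+\|\alpha_l\|_v+\sum_l\log^+\|\beta_l\|_v\bigr).$$
Since $\alpha_j=a_j/a_i\in K^\times$, Remark \ref{3.2} shows that $(c_v)$ is an $S$-constant. The $\log 4$ term absorbs the archimedean triangle-inequality factors; at non-archimedean places it is harmless.

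We split into cases according to $x$.

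\textbf{Case A.} If $x_i=0$, or more generally if $\delta_v(x)\geq\min\{1,\tfrac12\|\alpha_j\|_v\}$, then
$$\lambda_P^i(v,x)\leq\log 2+|\log\|\alpha_j\|_v|\leq c_v,$$
and the inequality holds trivially because $\lambda_P^j\geq0$.

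\textbf{Case B.} Otherwise $x_i\neq0$ and $\delta_v<\tfrac12\|\alpha_j\|_v$. Then
$$\|x_j/x_i\|_v\geq\|\alpha_j\|_v-\delta_v>\tfrac12\|\alpha_j\|_v>0,$$
so $x_j\neq0$. For $l\neq j$ we use the identity
$$\frac{x_l}{x_j}-\beta_l=\frac{x_i}{x_j}\left(\frac{x_l}{x_i}-\alpha_l\right)+\alpha_l\left(\frac{x_i}{x_j}-\frac{1}{\alpha_j}\right),$$
valid because $\beta_l=\alpha_l/\alpha_j$. In this identity $\|x_i/x_j\|_v<2\|\alpha_j\|_v^{-1}$, and
$$\left\|\frac{x_i}{x_j}-\frac{1}{\alpha_j}\right\|_v=\frac{\|\alpha_j-x_j/x_i\|_v}{\|x_j/x_i\|_v\,\|\alpha_j\|_v}\leq 2\|\alpha_j\|_v^{-2}\delta_v.$$
The case $l=i$ is covered by the same identity with $\alpha_i=1$. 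Consequently
$$\delta'_v\leq 4\max\{1,\|\alpha_l\|_v\}\,\|\alpha_j\|_v^{-2}\,\delta_v,$$
and taking $-\log^-$ gives $\lambda_P^i\leq\lambda_P^j+c_v$, using that $-\log^-$ is monotone and $-\log^-(Ct)\geq-\log^-t-\log^+C$.

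The main point requiring care is that the case split is pointwise in $(v,x)$ while the constant depends only on $v$, so the bound must be uniform in $x$. It is also essential that every term in $c_v$ has the form allowed by Remark \ref{3.2}; this is exactly the $L^1$-integrability requirement in Definition \ref{3.1}.
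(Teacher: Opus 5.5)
Your proof is correct and follows essentially the same route as the paper's: normalize at coordinate $i$, dispose of the case where $\lambda_P^i$ is below a threshold of size $\log 2+\log^+\|a_i/a_j\|_v$, and otherwise deduce $x_j\neq0$ and use the same algebraic identity to get $\delta'_v\leq 4\max\{1,\max_l\|a_l/a_i\|_v\}\,\|a_i/a_j\|_v^{2}\,\delta_v$, which yields the same shape of $S$-constant. Read your $\max\{1,\|\alpha_l\|_v\}$ as a maximum over all $l$ including $l=j$, since that is what absorbs the term $\|x_i/x_j\|_v\delta_v$; the $\beta$-terms in your $c_v$ are not needed for this direction, though they conveniently make $c_v$ symmetric in $i$ and $j$.
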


The lemma guarantees that we can make the following definition.

\begin{definition}
Let $P=[a_0,\dots,a_N]\in\mathbb{P}^N(K)$. Suppose $a_i\neq0$. A \emph{Weil function for} $P$ is a function $\lambda_P:S\times\mathbb{P}^N(K)\rightarrow\mathbb{R}_{\geq0}\cup\{+\infty\}$ which can bound each other by an $S$-constant with the function $\lambda_P^i$ defined as above.
\end{definition}

\begin{remark}
Lemma \ref{3.3} guarantees that the definition is independent of the choice of $i$. Also, since an $S$-constant (as a function from $S$ to $\mathbb{R}_{\geq0}$) cannot take value $\infty$, we see that $\lambda_P(v,x)=+\infty$ if and only if $x=P$.
\end{remark}

We fix some notations, including the definition of the \emph{Weil function for} $H_{\infty}$.

\begin{notation}
\begin{enumerate}
\item
For $P=[a_0,\dots,a_N]\in\mathbb{P}^N(K)$, the notation $\lambda_P:S\times\mathbb{P}^N(K)\rightarrow\mathbb{R}_{\geq0}\cup\{+\infty\}$ will always stand for a Weil function for $P$.

\item
The \emph{Weil function for} $H_{\infty}$ is the function $\lambda_{H_{\infty}}:S\times\mathbb{A}^N(K)\rightarrow\mathbb{R}_{\geq0}$ defined by $\lambda_{H_{\infty}}(v,x)=\log^{+}\max\limits_{1\leq i\leq N}\|x_i\|_v$ for every $v\in S$ and every $x=(x_1,\dots,x_N)\in\mathbb{A}^N(K)$.
\end{enumerate}
\end{notation}

Now we establish some first properties of the Weil functions.

\begin{lemma}\label{3.7}
Let $A$ be an automorphism of $\mathbb{P}^N$ and let $P\in\mathbb{P}^N(K)$. Then the functions $\lambda_P(v,x)$ and $\lambda_{A(P)}(v,A(x))$ from $S\times\mathbb{P}^N(K)$ to $\mathbb{R}_{\geq0}\cup\{+\infty\}$ can bound each other by an $S$-constant.
\end{lemma}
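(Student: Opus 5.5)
By Lemma \ref{3.3}, the conclusion does not depend on which coordinate we use to define $\lambda_P$ or $\lambda_{A(P)}$. So it suffices to fix one admissible index $i$ for $P$ and one admissible index $j$ for $A(P)$, and to bound $\lambda_{A(P)}^j(v,A(x))$ by $\lambda_P^i(v,x)+c_v$. The reverse inequality then follows by applying the same argument to $A^{-1}$ and the point $A(P)$, since $\lambda_{A^{-1}(A(P))}(v,A^{-1}(A(x)))=\lambda_P(v,x)$. Throughout, $A$ is given by an invertible $(N+1)\times(N+1)$ matrix $(\alpha_{kl})$ with entries in $K$. By Remark \ref{3.2}, the function $(\sum_{k,l}\log^+\|\alpha_{kl}\|_v+c)_{v\in S}$ is an $S$-constant, and the same holds for the entries of $A^{-1}$ and for $\log^+$ and $|\log|$ of the finitely many nonzero coordinates of $P$ and $A(P)$.

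The first step is to rewrite $\lambda_P^i$ in a homogeneous form. Write $\|x\|_v=\max_l\|x_l\|_v$ and $\delta_v(x,P)=\max_{k<l}\|x_ka_l-x_la_k\|_v/(\|x\|_v\|P\|_v)$, a chordal-type distance. I will show that $\lambda_P^i(v,x)$ and $-\log^-\delta_v(x,P)$ bound each other by an $S$-constant. Suppose $\|x_j/x_i-a_j/a_i\|_v<1$ for all $j$. Then $\|x\|_v/\|x_i\|_v\le 1+\|P\|_v/\|a_i\|_v$ up to the constant $2$ in the archimedean case. Also $x_ka_l-x_la_k$ can be expanded as $x_ia_i$ times a combination of the differences $\frac{x_k}{x_i}-\frac{a_k}{a_i}$, with coefficients bounded by $\|P\|_v/\|a_i\|_v$. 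Comparing both directions gives the bound up to terms of the form $\log^+$ or $|\log|$ of $\|a_l\|_v$, together with a constant $\log 2$ or $\log 3$ on archimedean places. These form an $S$-constant because $S$ has finite measure. The cases $x_i=0$, and where some difference has norm at least $1$, give $\lambda^i_P=0$. In those cases $\delta_v$ is bounded below by a similar explicit quantity, so $-\log^-\delta_v$ is bounded by an $S$-constant.

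The second step is to show that $\delta_v$ changes by at most a bounded factor under $A$. The $2\times2$ minors of the pair $(A x, A P)$ are the entries of $\wedge^2A$ applied to the minors of $(x,P)$. Hence $\max\|\text{minors}(Ax,AP)\|_v\le C_v\max\|\text{minors}(x,P)\|_v$, where $\log C_v$ is bounded by $2\sum\log^+\|\alpha_{kl}\|_v$ plus a constant. Similarly, $\|Ax\|_v\ge C_v'^{-1}\|x\|_v$ using $A^{-1}$, and the same holds for $P$. Therefore $\log\delta_v(Ax,AP)\le\log\delta_v(x,P)+c_v$, and consequently $-\log^-\delta_v(x,P)\le-\log^-\delta_v(Ax,AP)+c_v$. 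Combining this with the first step, applied to both $A$ and $A^{-1}$, finishes the proof.

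The main obstacle is the first step, in particular the case bookkeeping when $x_i=0$ or $x$ is far from $P$ at $v$. The difficulty is to make sure that every error term is genuinely a finite sum of $\log^+\|a\|_v$ and $|\log\|a\|_v|$ for fixed $a\in K^\times$, plus constants on archimedean places, so that it is $L^1$ on $S$ by Remark \ref{3.2}. Measurability is automatic for these functions.
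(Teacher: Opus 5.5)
Your proof is correct, but it is organized differently from the paper's.

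\textbf{The paper's route.} The paper stays in a single affine chart. It normalizes $a_0=1$, $a_0'\neq0$, and picks $c_v$ (built from $\log^+\|t_{jk}\|_v$, $\log^+\|a_j'\|_v$, $\log^+\|1/a_0'\|_v$) large enough that $\lambda_P^0(v,x)\ge c_v$ forces $x_0\neq0$ and $\|a_0'-x_0'\|_v\le\frac12\|a_0'\|_v$. It then expands $\frac{a_i'}{a_0'}-\frac{x_i'}{x_0'}$ linearly in the differences $x_j-a_j$. When $\lambda_P^0(v,x)<c_v$ there is nothing to prove, because Weil functions are nonnegative. The reverse bound comes from $A^{-1}$.

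\textbf{Your route.} You pass everything through the chart-free quantity $\delta_v(x,P)$ built from $2\times2$ minors, and both steps go through. For Step 1 with $x_i\neq0$, put $u_k=\frac{x_k}{x_i}-\frac{a_k}{a_i}$. The identity $x_ka_l-x_la_k=x_i(u_ka_l-u_la_k)$ gives $\delta_v\le2\max_k\|u_k\|_v$. Using the minors with $l=i$, together with a coordinate of maximal size when $\|x_i\|_v$ is small relative to $\|x\|_v$, gives $\delta_v\ge\frac{\|a_i\|_v^2}{2\|P\|_v^2}\min\{1,\max_k\|u_k\|_v\}$. The case $x_i=0$ is easy. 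All error terms are therefore of the form allowed by Remark \ref{3.2}. Step 2 is just $\mathrm{minors}(Ax,AP)=\wedge^2A\cdot\mathrm{minors}(x,P)$ together with $\|Ax\|_v\asymp\|x\|_v$.

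\textbf{Trade-offs.} Your Step 1 holds for every admissible index, so it reproves Lemma \ref{3.3} rather than needing it. It also makes $A$-invariance a uniform multilinear estimate, independent of charts; this is the standard ``Weil function $=-\log$ of a projective distance'' picture. The cost is that your comparison must be two-sided. You therefore cannot avoid the far-from-$P$ case analysis that the paper sidesteps via nonnegativity, so the paper's proof is shorter for this one lemma.

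\textbf{A cosmetic point.} You announce that you will bound $\lambda_{A(P)}(v,A(x))$ by $\lambda_P(v,x)+c_v$. Step 2 as written actually yields the opposite inequality, $\lambda_P\le\lambda_{A(P)}\circ A+c_v$. This is harmless, since you run the argument for both $A$ and $A^{-1}$.
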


\begin{lemma}\label{3.8}
Let $f$ be an endomorphism of $\mathbb{P}^N$ and let $P\in\mathbb{P}^N(K)$.
\begin{enumerate}
\item
There is an $S$-constant $(c_v)_{v\in S}$ such that $\lambda_P(v,x)\leq\lambda_{f(P)}(v,f(x))+c_v$ holds for every $v\in S$ and every $x\in\mathbb{P}^N(K)$.

\item
There exist $S$-constants $(c_{1,v})_{v\in S}$ and $(c_{2,v})_{v\in S}$ such that for every $v\in S$ and every $x\in\mathbb{P}^N(K)$, we have $\lambda_{f(P)}(v,f(x))\leq e_f(P)\lambda_P(v,x)+c_{2,v}$ when $\lambda_P(v,x)\geq c_{1,v}$. Here $e_f(P)$ denotes the multiplicity of $f$ at $P$.
\end{enumerate}
\end{lemma}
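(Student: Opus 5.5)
The plan is to reduce to a local computation in affine charts, and then use two facts. Part (i) is essentially a Lipschitz estimate for the polynomials defining $f$. Part (ii) rests on the algebraic fact that the pull-back of the maximal ideal at $f(P)$ contains the $e_f(P)$-th power of the maximal ideal at $P$.

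\textbf{Reduction.} Choose automorphisms $A,B\in\mathrm{PGL}_{N+1}(K)$ with $A([1,0,\dots,0])=P$ and $B(f(P))=[1,0,\dots,0]$. Replacing $f$ by $B\circ f\circ A$ and applying Lemma \ref{3.7} twice, we may assume $P=f(P)=[1,0,\dots,0]$. This replacement changes neither $e_f(P)$ nor the Weil functions, up to $S$-constants. We then use $\lambda_P=\lambda_P^0$, so $\lambda_P(v,x)=-\log^{-}\max_j\|y_j\|_v$ with $y_j=x_j/x_0$, and similarly for $f(P)$.

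Write $f=[F_0,\dots,F_N]$ with $F_i\in K[x_0,\dots,x_N]$ homogeneous of degree $d$. Then $F_0(1,0,\dots,0)\neq0$ and $F_i(1,0,\dots,0)=0$ for $i\ge1$. Set $\tilde F_i(y)=F_i(1,y)$.

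Both inequalities only need checking when $\max_j\|y_j\|_v\le r_v$, for a suitable $r_v\le1$ with $(-\log r_v)_{v\in S}$ an $S$-constant. Outside this region $\lambda_P(v,x)\le -\log r_v$, which is absorbed into $c_v$; in (ii) we take $c_{1,v}=-\log r_v$. Points with $x_0=0$ lie in this outside region, since there $\lambda_P^0=0$.

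\textbf{Part (i).} On $\|y\|_v\le1$, each $\tilde F_i$ with $i\ge1$ has no constant term. Hence $\|\tilde F_i(y)\|_v\le C_v\max_j\|y_j\|_v$, where $\log^{+}C_v$ is bounded by the sum of the $\log^{+}$ of the coefficients plus $\log$ of the number of monomials at archimedean places. This is an $S$-constant by Remark \ref{3.2}.

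Likewise $\|\tilde F_0(y)-\tilde F_0(0)\|_v\le C_v\max_j\|y_j\|_v$. So if $\max_j\|y_j\|_v\le r_v:=\min\{1,\|\tilde F_0(0)\|_v/(2C_v)\}$, then $\|\tilde F_0(y)\|_v\ge\|\tilde F_0(0)\|_v/2$. Here $(|\log\|\tilde F_0(0)\|_v|)$ is an $S$-constant. Dividing gives $\max_i\|\tilde F_i/\tilde F_0\|_v\le C'_v\max_j\|y_j\|_v$, which after taking $-\log^{-}$ is exactly (i).

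\textbf{Part (ii).} Let $\mathfrak m=(y_1,\dots,y_N)\subseteq\mathcal O=\mathcal O_{\mathbb P^N_K,P}$ and $I=f^*\mathfrak m_{f(P)}\mathcal O=(\tilde F_1/\tilde F_0,\dots,\tilde F_N/\tilde F_0)$. The ring $\mathcal O/I$ is Artinian local of length $e=e_f(P)$; its length is unchanged by base change to $\mathbb C$, since $P$ is $K$-rational. The chain $\mathfrak m^j(\mathcal O/I)$ strictly decreases until it hits $0$, by Nakayama's lemma. Therefore $\mathfrak m^e\subseteq I$.

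Clearing denominators, for each monomial $y^\alpha$ with $|\alpha|=e$ we get an identity $D(y)\,y^\alpha=\sum_{i\ge1}h_{\alpha i}(y)\tilde F_i(y)$. Here $D,h_{\alpha i}\in K[y]$, and $D(0)\neq0$ after multiplying through by a suitable power of $\tilde F_0$.

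Now shrink $r_v$ as in part (i) so that $\|D(y)\|_v\ge\|D(0)\|_v/2$ and $\|h_{\alpha i}(y)\|_v\le C_v$ on $\max_j\|y_j\|_v\le r_v$. Choosing $\alpha=e\cdot\epsilon_j$ for the index $j$ maximizing $\|y_j\|_v$ gives
$$
\max_j\|y_j\|_v^{\,e}\le C''_v\max_i\|\tilde F_i(y)/\tilde F_0(y)\|_v .
$$
Taking $-\log^{-}$ yields $\lambda_{f(P)}(v,f(x))\le e\,\lambda_P(v,x)+c_{2,v}$, valid whenever $\lambda_P(v,x)\ge c_{1,v}=-\log r_v$.

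\textbf{Main obstacle.} The delicate point is less the algebra than checking that every constant produced, namely $C_v$, $r_v$ and $C''_v$, is measurable and $L^1$ on $S$. This should follow because each is built from $\log^{\pm}$ of finitely many fixed elements of $K^\times$ plus real constants, so Remark \ref{3.2} applies. The second point needing care is the ideal containment $\mathfrak m^e\subseteq I$ over $K$ with explicit polynomial certificates, which is what makes the local bound uniform in $v$.
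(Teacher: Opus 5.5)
Your proposal is correct and follows essentially the same route as the paper. You reduce via Lemma \ref{3.7} to $P=f(P)=[1,0,\dots,0]$, prove (i) by bounding the coordinates without constant term linearly in $\max_j\|y_j\|_v$ while keeping $\|\tilde F_0(y)\|_v\geq\frac12\|\tilde F_0(0)\|_v$ near $P$, and prove (ii) from explicit polynomial certificates for $\mathfrak m^{e}\subseteq f^*\mathfrak m_{f(P)}\mathcal O$. The paper writes these certificates as $x_i^e=\sum_j\frac{g_j}{1+g_0}\cdot\frac{r_{ij}}{1+s_{ij}}$, whereas you clear denominators into $D(y)y^\alpha=\sum_i h_{\alpha i}\tilde F_i$. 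Your Nakayama argument for $\mathfrak m^e\subseteq I$ and your remark that the length is unchanged by base change fill in points the paper leaves implicit.
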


Recall that an endomorphism of $\mathbb{A}^N$ is said to be regular if it can be extended to an endomorphism of $\mathbb{P}^N$. See Setting \ref{setting}.

\begin{lemma}\label{3.9}
Let $f$ be a regular endomorphism of $\mathbb{A}^N$ of algebraic degree $d$. Then the functions $\lambda_{H_{\infty}}(v,f(x))$ and $d\lambda_{H_{\infty}}(v,x)$ from $S\times\mathbb{A}^N(K)$ to $\mathbb{R}_{\geq0}$ can bound each other by an $S$-constant.
\end{lemma}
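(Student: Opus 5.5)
We prove Lemma \ref{3.9} by comparing the largest coordinate of $f(x)$ with the $d$-th power of the largest coordinate of $x$. Write $f=(F_1,\dots,F_N)$ with $F_i=f_i+g_i$, and set $\|x\|_v:=\max_{1\leq i\leq N}\|x_i\|_v$, so that $\lambda_{H_{\infty}}(v,x)=\log^{+}\|x\|_v$. It suffices to find an $S$-constant $(c_v)$ with
$$
\bigl|\log\max\{1,\|f(x)\|_v\}-d\log\max\{1,\|x\|_v\}\bigr|\leq c_v
$$
for all $v\in S$ and all $x\in\mathbb{A}^N(K)$. The key point is that this is the affine version of the standard estimate $\log\max_i\|F_i(x_0,\dots,x_N)\|_v=d\log\max_i\|x_i\|_v+O(1)$ for the homogenized map $(x_0^d,\tilde F_1,\dots,\tilde F_N)$. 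Here $\tilde F_i$ is the homogenization of $F_i$, evaluated at $x_0=1$.

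\textbf{Upper bound.} Each $F_i$ is a sum of at most $\binom{N+d}{d}$ monomials of degree $\leq d$, with coefficients in $K$. At a non-archimedean $v$, the ultrametric inequality gives $\|F_i(x)\|_v\leq \max_{\text{coeff }a}\|a\|_v\cdot\max\{1,\|x\|_v\}^d$. At an archimedean $v$ the triangle inequality gives the same bound up to the factor $\binom{N+d}{d}$. Hence
$$
\log^{+}\|f(x)\|_v\leq d\log^{+}\|x\|_v+\sum_{a}\log^{+}\|a\|_v+\log\binom{N+d}{d},
$$
and the error term is an $S$-constant by Remark \ref{3.2}, because $S$ has finite measure.

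\textbf{Lower bound.} This is the main step, and it uses the Nullstellensatz. Since $x_0^d,\tilde F_1,\dots,\tilde F_N$ have no common zero in $\mathbb{P}^N$, there is an integer $m\geq d$ such that every monomial of degree $m$ in $x_0,\dots,x_N$ lies in the ideal they generate over $K$. So
$$
x_j^{m}=G_{j0}x_0^d+\sum_{i}G_{ji}\tilde F_i
$$
with $G_{ji}$ homogeneous of degree $m-d$ and coefficients in $K$. Evaluate at $(1,x_1,\dots,x_N)$ and take $j$, including $j=0$, so that $\|x_j\|_v=\max\{1,\|x\|_v\}$. Bounding the $G_{ji}$ as in the upper bound gives
$$
\max\{1,\|x\|_v\}^{m}\leq C_v\max\{1,\|x\|_v\}^{m-d}\max\{1,\|f(x)\|_v\},
$$
where $\log C_v$ is a sum of terms $\log^{+}\|b\|_v$ over the finitely many coefficients $b$ of the $G_{ji}$, plus a constant. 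Dividing yields $d\log^{+}\|x\|_v\leq\log^{+}\|f(x)\|_v+\log C_v$, and $(\log C_v)$ is again an $S$-constant by Remark \ref{3.2}.

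The only place needing care is that the Nullstellensatz identity is taken over $K$ itself. This holds because $K$-linear algebra determines membership in the ideal, so the coefficients of the $G_{ji}$ lie in $K$ and Remark \ref{3.2} applies. The $L^1$ condition then holds automatically, since only finitely many fixed elements of $K$ enter. The degree gap is not needed; regularity, meaning the $f_i$ have no common nonzero zero, is exactly what the Nullstellensatz step uses.
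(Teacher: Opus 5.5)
Your proof is correct. The upper bound by counting monomials, and the lower bound from a Nullstellensatz identity $x_j^m=G_{j0}x_0^d+\sum_i G_{ji}\tilde F_i$ over $K$ evaluated at $x_0=1$ (with $j$ chosen so that $\|x_j\|_v=\max\{1,\|x\|_v\}$), give the two-sided estimate, and the error term is an $S$-constant by Remark \ref{3.2}. The paper omits the proof: it deduces the lemma from Vojta's theory of Weil functions for Cartier divisors \cite[Theorem 3.24]{Voj21}, using $f^*H_{\infty}=dH_{\infty}$, and notes that one can instead do ``a standard calculation using Hilbert's Nullstellensatz'' --- which is exactly what you carried out.
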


We also need a statement which says that a point can be separated from a closed subset by an $S$-constant.

\begin{lemma}\label{3.10}
Let $V$ be a closed subvariety of $\mathbb{P}^N$ and let $P\in(\mathbb{P}^N\setminus V)(K)$. Then there is an $S$-constant $(c_v)_{v\in S}$ such that for every $v\in S$ and every $x\in\mathbb{P}^N(K)$, we have $x\notin V(K)$ when $\lambda_P(v,x)\geq c_{v}$.
\end{lemma}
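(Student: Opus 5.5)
We want an $S$-constant $(c_v)$ such that $\lambda_P(v,x)\geq c_v$ forces $x\notin V(K)$. The plan is to reduce to a single hypersurface and then use an elementary estimate on polynomials.

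\textbf{Step 1: a hypersurface separating $P$ from $V$.} Since $P\notin V$ and $V$ is closed, there is a homogeneous polynomial $F$ vanishing on $V$ with $F(P)\neq0$. We may take $F$ with coefficients in $K$: the ideal of $V$ is generated by forms over $K$, and one of the generators does not vanish at $P$. Then $V\subseteq\{F=0\}$, so it suffices to show the following: if $\lambda_P(v,x)\geq c_v$, then $F(x)\neq0$.

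\textbf{Step 2: normalize coordinates.} Write $P=[a_0,\dots,a_N]$ and choose $i$ with $a_i\neq0$. By Lemma \ref{3.3} we may work with $\lambda_P^i$, at the cost of enlarging the $S$-constant. Dehomogenize by setting $u_j=x_j/x_i$ and $b_j=a_j/a_i$, and put $G(u)=F(u)/F(b)$, which satisfies $G(b)=1$. Taylor expansion at $b$ gives
\[
G(u)=1+\sum_{0<|\alpha|\leq D}\gamma_\alpha (u-b)^\alpha
\]
with $\gamma_\alpha\in K$.

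\textbf{Step 3: the estimate.} Set $t=\max_{j\neq i}\|u_j-b_j\|_v$, so that $\lambda_P^i(v,x)=-\log^-t$. Suppose $x_i\neq0$ and $t<1$. Let $M=\#\{\alpha\}$. Then
\[
\|G(u)-1\|_v\leq M\max_\alpha\|\gamma_\alpha\|_v\, t
\]
in the archimedean case; in the non-archimedean case the same bound holds without the factor $M$. Define
\[
c_v=\log^+\bigl(2M\max_\alpha\|\gamma_\alpha\|_v\bigr)+\log 2 .
\]
This is an $S$-constant by Remark \ref{3.2}: it is bounded by $\sum_\alpha\log^+\|\gamma_\alpha\|_v$ plus a constant, and $S$ has finite measure. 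If $\lambda_P^i(v,x)\geq c_v$, then $t<1$, the hypothesis $x_i\neq0$ is automatic since $\lambda_P^i$ is positive, and $\|G(u)-1\|_v\leq 1/2<1$. Hence $G(u)\neq0$, so $F(x)\neq0$ and $x\notin V(K)$. If $\lambda_P^i(v,x)<c_v$, there is nothing to prove.

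\textbf{Main obstacle.} The only delicate point is ensuring that the constant is genuinely $L^1$ over the archimedean part of $S$. This holds because $c_v$ is built from $\log^+$ of finitely many fixed elements of $K$, exactly as permitted by Remark \ref{3.2}. The transfer between $\lambda_P^i$ and an arbitrary Weil function $\lambda_P$ is handled by Lemma \ref{3.3} and the definition of a Weil function.
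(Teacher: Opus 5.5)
Your proof is correct and follows essentially the paper's route. Like the paper, you pick a form in the ideal of $V$ that does not vanish at $P$, normalize its value at $P$ to $1$, and show it stays nonzero near $P$ using a constant built from $\log^+$ of its coefficients; the only cosmetic difference is that you Taylor-expand at $b$ in the chart $x_i\neq0$, whereas the paper first moves $P$ to $[1,0,\dots,0]$ via Lemma \ref{3.7}.
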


Next, we let $C\subseteq\mathbb{A}^N$ be an irreducible closed subcurve. We consider the behavior of Weil functions near points in $(\overline{C}\cap H_{\infty})(K)$, where $\overline{C}$ denotes the closure of $C$ in $\mathbb{P}^N$.

\begin{lemma}\label{3.11}
Let $C\subseteq\mathbb{A}^N$ be an irreducible closed subcurve and let $P\in(\overline{C}\cap H_{\infty})(K)$. Denote $i(P)$ as the intersection number of $\overline{C}$ and $H_{\infty}$ at $P$. Then there exist $S$-constants $(c_{1,v})_{v\in S}$ and $(c_{2,v})_{v\in S}$ such that for every $v\in S$ and every $x\in C(K)$, we have $\lambda_{H_{\infty}}(v,x)\leq i(P)\lambda_P(v,x)+c_{2,v}$ when $\lambda_P(v,x)\geq c_{1,v}$.
\end{lemma}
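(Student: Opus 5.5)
The idea is to reduce to a local computation near $P$ in an affine chart of $\mathbb{P}^N$ containing $P$. There we compare two functions on $\overline{C}$: the pulled-back local equation of $H_{\infty}$, which vanishes to order $i(P)$ along the branches of $\overline{C}$ at $P$, and the local coordinates measuring distance to $P$.

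\textbf{Step 1: normalize coordinates.} Write $P=[0,a_1,\dots,a_N]$ with, say, $a_1\neq0$. By Lemma \ref{3.7}, a linear change of the coordinates $x_1,\dots,x_N$ (fixing $x_0$, hence preserving $H_{\infty}$ and $\mathbb{A}^N$) changes $\lambda_{H_\infty}$ only by an $S$-constant. Hence we may assume $P=[0,1,0,\dots,0]$.

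Use the chart $\{x_1\neq0\}$ with coordinates $u_0=x_0/x_1$ and $u_j=x_j/x_1$ for $j\geq2$. Then $P$ is the origin, and $\lambda_P^1(v,x)=-\log^-\max_j\|u_j\|_v$. For $x\in\mathbb{A}^N(K)$ with $\lambda_P(v,x)$ large, we have $\|x_1\|_v\geq\|x_j\|_v$ up to an $S$-constant, so $\lambda_{H_\infty}(v,x)=-\log\|u_0\|_v$ up to an $S$-constant. Thus the claim becomes
$$-\log\|u_0\|_v\leq i(P)\,\bigl(-\log\max_{j}\|u_j\|_v\bigr)+c_{2,v}$$
for points of $C$ that are $v$-adically close to the origin.

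\textbf{Step 2: the algebraic input.} Let $\mathfrak{a}\subseteq\mathcal{O}_{\overline{C},P}$ be the ideal generated by the $u_j$; this is the maximal ideal of $P$ on $\overline{C}$. Since $i(P)$ is the length of $\mathcal{O}_{\overline{C},P}/(u_0)$, for $n$ large we have $\mathfrak{m}_P^n\subseteq(u_0)$. I want the sharper, valuative statement that $u_0^{-1}\mathfrak{m}^{i(P)}$ lies in the normalization. I would get it on the normalization $\widetilde{C}$. At each point $Q$ over $P$ with ramification data, $\mathrm{ord}_Q(u_0)=i_Q$ and $\sum_Q i_Q=i(P)$, while $\min_j\mathrm{ord}_Q(u_j)\geq1$. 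Hence $\mathrm{ord}_Q(u_0)\leq i(P)\min_j\mathrm{ord}_Q(u_j)$. So $\max_j\|u_j\|^{i(P)}/\|u_0\|$ is bounded near $P$, since it is a ratio of functions with controlled order on $\widetilde{C}$.

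To make this global and algebraic, the plan is to produce an integral dependence relation. The elements $w_\alpha=u^\alpha/u_0$, for monomials $u^\alpha$ of degree $i(P)$ in the $u_j$, are integral over $\mathcal{O}_{\overline{C},P}$. So each satisfies a monic equation $w^m+b_1w^{m-1}+\dots+b_m=0$ with $b_i$ regular functions near $P$. Multiplying out gives polynomial identities
$$(u^\alpha)^m=-\sum_i b_i\,(u^\alpha)^{m-i}u_0^i$$
in the coordinate ring of an affine neighborhood of $P$ in $\overline{C}$. There is also the possibility of other branches of $\overline{C}$ through a neighbourhood. These are handled by working on the affine piece $\overline{C}\cap\{x_1\neq0\}$ and using Lemma \ref{3.10}-style separation.

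\textbf{Step 3: convert to absolute values.} The functions $b_i$ are polynomials in the $u$'s with coefficients in $K$ modulo the ideal of $\overline{C}$. So for $x$ with $\max_j\|u_j\|_v\leq1$ and $\|u_0\|_v\leq1$, we get $\|b_i(x)\|_v\leq e^{c_v}$, with $c_v$ an $S$-constant built from the $\log^+$ of the coefficients as in Remark \ref{3.2}. Set $t=\|u^\alpha\|_v/\|u_0\|_v$. The identity gives $t^m\leq m\,e^{c_v}\max_i t^{m-i}$, hence $t\leq m e^{c_v}$ (or $t\leq1$).

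Taking the max over $\alpha$, and using $\max_\alpha\|u^\alpha\|_v=\max_j\|u_j\|_v^{i(P)}$, yields the required inequality. Here $c_{1,v}$ ensures $\|u_j\|_v\leq1$ and that $x$ lies in the chart region where Step 1 applies.

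The main obstacle is Step 2: establishing cleanly that $u^\alpha/u_0$ is integral over the local ring of $\overline{C}$ at $P$, with exponent exactly $i(P)$. I would prove this via the valuative criterion on the normalization, using $\mathrm{ord}_Q(u_0)=i_Q\leq i(P)$.
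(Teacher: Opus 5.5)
Your proof is correct and follows the same outline as the paper's: move $P$ to $[0,1,0,\dots,0]$, work in the chart $\{x_1\neq0\}$ with $u_0=x_0/x_1$ as local equation of $H_\infty$, extract an algebraic relation in $\mathcal{O}_{\overline{C},P}$, and evaluate it at points $v$-adically close to $P$. You obtain the algebraic relation by a longer route, however.

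\textbf{How the paper gets the relation.} The paper never passes to the normalization. Since $\mathcal{O}_{\overline{C},P}/(u_0)$ is an Artinian local ring of length $i(P)$, Nakayama forces its maximal ideal to satisfy $\mathfrak{m}^{i(P)}=0$. So $u_k^{i(P)}\in u_0\,\mathcal{O}_{\overline{C},P}$ outright. In other words, the ``$n$ large'' in your Step 2 can be taken to be $n=i(P)$, and the ``sharper, valuative'' statement you aim for is actually weaker than this. Lifting to $\mathcal{O}_{\mathbb{P}^N,P}$, the paper gets the linear identity $u_k^{i(P)}=u_0\frac{r_{k0}}{1+s_{k0}}+\sum_j g_j\frac{r_{kj}}{1+s_{kj}}$, where the $s_{kj}$ vanish at $P$ and the $g_j$ are the equations of $\overline{C}$. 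The estimate is then immediate, since $g_j(x)=0$ for $x\in C(K)$.

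\textbf{Comparison.} Your monic equation is just this relation with $m=1$, so the root bound in Step 3 is not needed and the step you flagged as the main obstacle disappears. Your valuative argument is nonetheless valid. Over non-closed $K$ the relevant identity is $\sum_Q[\kappa(Q):K]\,\mathrm{ord}_Q(u_0)=i(P)$, but you only use $\mathrm{ord}_Q(u_0)\le i(P)$. The price of your route is the finiteness of the normalization and a Cauchy-type bound on roots of monic polynomials.

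\textbf{Points to tighten.} The coefficients $b_i$ are elements of the local ring (or of $A[1/q]$ for an affine neighbourhood), not polynomials modulo $I(\overline{C})$. Step 3 therefore needs a lower bound on their denominators near $P$. Normalize each denominator as $1+s$ with $s$ in the ideal of $P$, as the paper does. Then $\|1+s(x)\|_v\ge\frac12$ once $\lambda_P(v,x)$ exceeds the $S$-constant $l_v(s)+\log^+N(s)+\log 2$. This is also what handles the other points of $\overline{C}\cap H_\infty$ in the chart. Finally, the invariance of $\lambda_{H_\infty}$ under your linear change of coordinates is Lemma \ref{3.9}, not Lemma \ref{3.7}; the latter only handles $\lambda_P$.
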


We introduce the following notions to simplify the writing of proofs.

\begin{notation}\label{3.12}
For a polynomial $f(x_1,\dots,x_n)=\sum\limits_{i_1,\dots,i_n\geq0}a_{i_1,\dots,i_n}x_1^{i_1}\cdots x_n^{i_n}$, we denote $N(f)=|\{(i_1,\dots,i_n)\in\mathbb{N}^n|\ a_{i_1,\dots,i_n}\neq0\}|$. For a place $v\in S$, we denote $H_v(f)=\max\limits_{(i_1,\dots,i_n)\in\mathbb{N}^n}\|a_{i_1,\dots,i_n}\|_v$ and $l_v(f)=\sum\limits_{(i_1,\dots,i_n)\in\mathbb{N}^n}\log^+\|a_{i_1,\dots,i_n}\|_v$.
\end{notation}

We make some remarks before stating the next proposition, which amounts to saying that a point in $C$ which is close to $H_{\infty}$ must be close to a point in $\overline{C}\cap H_{\infty}$. In comparison with the previous statements, the proof of this proposition is somehow more difficult and non-standard. This is because this proposition is a priori global in nature, while all the previous statements are essentially local. In the setting of number fields, there is only a finite number of places under consideration, and then one can reduce to a local problem by using compactness. However, this approach is non-effective. So in the setting of arithmetic function fields, we need to work harder in order to gain the $L^1$-integrability of the $S$-constants. The method of the proof is due to pinaki \cite{pinaki}.

\begin{proposition}\label{3.13}
Let $C\subseteq\mathbb{A}^N$ be an irreducible closed subcurve. Assume $\overline{C}\cap H_{\infty}$ contains only $K$-rational points, and denote them as $P_1,\dots,P_t\in H_{\infty}(K)$. Then there is an $S$-constant $(c_v)_{v\in S}$ such that $\lambda_{H_{\infty}}(v,x)\leq\sum\limits_{j=1}^{t}i(P_j)\lambda_{P_j}(v,x)+c_v$ holds for every $v\in S$ and every $x\in C(K)$.
\end{proposition}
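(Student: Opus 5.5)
The plan is to prove a global algebraic identity and then specialize it place by place, so that every constant that appears is built from $\log^+$ of finitely many fixed elements of $K$ and is therefore an $S$-constant by Remark \ref{3.2}. This avoids any compactness argument over the infinitely many places in $M_K^{\infty}$.

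\textbf{Reduction and choice of functions.} After a linear change of coordinates on $\mathbb{A}^N$, permitted by Lemma \ref{3.7} and by a comparison of $\lambda_{H_\infty}$ under linear maps analogous to Lemma \ref{3.9}, I may assume each $P_j$ has nonzero $x_1$-coordinate. I may also assume the projection of $C$ to the $x_1$-axis is finite, which holds for a general linear change. Then $x_1$ is a regular function on $C$ whose poles are exactly $P_1,\dots,P_t$, with order $i(P_j)$ at $P_j$. Write $P_j=[0,1,a_{j2},\dots,a_{jN}]$. For each $j$ and each $m\ge2$, the rational function $u_{jm}=x_m/x_1-a_{jm}$ vanishes at $P_j$, and $\lambda_{P_j}(v,x)=-\log^-\max_m\|u_{jm}\|_v$ up to the terms involving $1/x_1$, using the chart $i=1$. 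The key algebraic input I want is a finite system of polynomial identities on $C$. Because $C$ is finite over the $x_1$-line, each $x_m$ is integral over $K[x_1]$. Thus $x_m$ satisfies a monic equation $x_m^{r}+b_{1}(x_1)x_m^{r-1}+\cdots+b_r(x_1)=0$, with $\deg b_s\le s$ after homogenizing, since $\overline{C}$ does not meet the point $[0,0,\dots,1,\dots]$ on $H_\infty$.

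\textbf{Place-by-place estimate.} Fix $v\in S$ and $x\in C(K)$. The first step is to compare $\lambda_{H_\infty}(v,x)$ with $\log^+\|x_1\|_v$. The monic equation together with the standard root bound gives $\log^+\|x_m\|_v\le\log^+\|x_1\|_v+c_v$, where $c_v$ is a sum of $\log^+$ of coefficients, computed as in Notation \ref{3.12}. Hence the bound reduces to proving $\log^+\|x_1\|_v\le\sum_j i(P_j)\lambda_{P_j}(v,x)+c_v$. For this I use the norm $\mathrm{Nm}$ from $K(C)$ to $K(x_2/x_1,\dots)$, or more concretely a resultant identity. Choose a generic linear form $\ell=\sum_m \alpha_m x_m/x_1$ with $\alpha_m$ in $\mathbb{Z}$ that separates the $P_j$. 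On $\overline{C}$, the function $1/x_1$ has divisor $\sum_j i(P_j)P_j-(\text{affine zeros})$, while $\ell-\ell(P_j)$ vanishes at $P_j$. Eliminating gives a polynomial identity
$$
\prod_{j=1}^{t}\bigl(\ell-\ell(P_j)\bigr)^{i(P_j)}\cdot Q_1=\frac{1}{x_1}\cdot Q_2
$$
on $C$, where $Q_1$ and $Q_2$ are polynomials in $1/x_1$ and the $x_m/x_1$ with coefficients in $K$, and $Q_1$ is nonvanishing near $H_\infty\cap\overline{C}$. I will obtain such an identity from the Nullstellensatz in the ring of the affine chart $\{x_1\neq0\}$ of $\overline{C}$, through an explicit ideal-membership statement. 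Taking $\log\|\cdot\|_v$ then bounds $\log^+\|x_1\|_v$ by $\sum_j i(P_j)(-\log^-\|\ell-\ell(P_j)\|_v)$ plus $l_v(Q_1)+l_v(Q_2)+\log N(Q_i)$, which is an $S$-constant. The required sum follows, since $-\log^-\|\ell-\ell(P_j)\|_v\le\lambda_{P_j}(v,x)+c_v$.

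\textbf{Main obstacle.} The hard step is getting the identity with the exact exponents $i(P_j)$ while keeping the lower bound on $\|Q_1\|_v$ uniform over all places. The lower bound needs a \emph{second} identity, a Bézout-type relation $A\,Q_1+B\cdot(\text{something vanishing at }P_j)=1$, so that $\|Q_1\|_v$ is bounded below by $e^{-c_v}$ whenever $x$ is $v$-close to $H_\infty$. When $x$ is not close to $H_\infty$, meaning $\|x_1\|_v$ is bounded by an $S$-constant, the inequality holds trivially. I expect the exact multiplicities to require working in the local rings $\mathcal{O}_{\overline{C},P_j}$. If $\overline{C}$ is singular at $P_j$, I would pass to the normalization, where $i(P_j)$ splits as a sum over branches. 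On each branch I would compare orders via the valuation and clear denominators to return to polynomial identities on $C$.
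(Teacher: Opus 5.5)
Your final step fails, and for a different reason than the one you anticipate. Since $\ell-\ell(P_j)=\sum_m\alpha_m u_{jm}$, you have $\|\ell(x)-\ell(P_j)\|_v\le A_v\max_m\|u_{jm}(x)\|_v$, where $A_v$ depends only on the $\alpha_m$. So what actually holds is $-\log^-\|\ell-\ell(P_j)\|_v\ge\lambda_{P_j}(v,x)-c_v$. The inequality $-\log^-\|\ell-\ell(P_j)\|_v\le\lambda_{P_j}(v,x)+c_v$ that you invoke runs the wrong way, and it is false. A single form in the $x_m/x_1$ only measures the distance from $\pi(x)$ to the hyperplane $\{\ell=\ell(P_j)\}$ of $H_{\infty}$, and it cannot see the coordinate $1/x_1$ of $\lambda_{P_j}$. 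Three cases show this:
\begin{enumerate}
\item For a vertical line through $P_1$, $\ell\equiv\ell(P_1)$ on $C$, so the left side is $+\infty$ at every point.
\item The same happens at any affine zero of $\ell-\ell(P_j)$ on $C$.
\item Take $C=\{x_2^2-x_1^2=1\}\subseteq\mathbb{A}^2$ with $P_{1,2}=[0,1,\pm1]$; here $\ell$ is forced to be a multiple of $x_2/x_1$. On $C$ one has $(\ell-1)(\ell+1)=x_1^{-2}$. Along points of $C(K)$ tending $v$-adically to $P_1$, this gives $-\log\|\ell(x)-1\|_v=2\log\|x_1\|_v+O(1)$, while $\lambda_{P_1}(v,x)=\log\|x_1\|_v+O(1)$. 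No $S$-constant absorbs the difference.
\end{enumerate}
So your bound on $\log^+\|x_1\|_v$ in terms of $\ell$ is strictly weaker than the proposition and cannot be converted back into it.

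The obstacle you flag, the lower bound on $Q_1$, is not a real one. Let $A$ be the coordinate ring of $\overline{C}\cap\{x_1\neq0\}$ and put $y_0=1/x_1$.
\begin{enumerate}
\item The ring $\mathcal{O}_{\overline{C},P_j}/(y_0)$ has length $i(P_j)$, so $\mathfrak{m}_{P_j}^{i(P_j)}\subseteq y_0\mathcal{O}_{\overline{C},P_j}$. This is exactly the input to Lemma \ref{3.11}.
\item The $P_j$ are the only maximal ideals of $A$ containing $y_0$, and ideal membership is local, so $\prod_j\mathfrak{m}_{P_j}^{i(P_j)}\subseteq y_0A$. You may therefore take $Q_1=1$.
\end{enumerate}
The repair is to apply this to all generators of $\mathfrak{m}_{P_j}$, not to one linear form.
\begin{enumerate}
\item For each of the finitely many choices $g_j\in\{y_0,u_{j2},\dots,u_{jN}\}$, write $\prod_j g_j^{i(P_j)}=y_0Q_g$ in $A$.
\item Given $(v,x)$, choose each $g_j$ of largest $v$-norm at $x$. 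Then $-\log\|g_j(x)\|_v\le\lambda^1_{P_j}(v,x)$.
\item Your monic-equation bound $\|x_m\|_v\le e^{c_v}\max\{1,\|x_1\|_v\}$ shows that $\log\|Q_g(x)\|_v$ is at most an $S$-constant whenever $\|x_1\|_v\ge1$.
\end{enumerate}
The proposition then follows. This route would be shorter than the paper's. The paper first proves a weak inequality with a large coefficient $M$, using projections to $\mathbb{P}^2$ and the grid sets $Y_k$, and removes spurious grid points by Lemma \ref{3.10}. Only after that does it recover the exact exponents $i(P_j)$ locally, via Lemma \ref{3.11}.
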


\begin{proof}
Lemma \ref{3.7} and Lemma \ref{3.9} guarantee that implementing a linear transformation of $\mathbb{A}^N$ will not affect the conclusion. Therefore, we may assume that $P_1,\dots,P_t$ all lie in the open set $\{x_1\cdots x_N\neq0\}$.

For $k\in\{1,\dots,N\}$, we introduce a finite set $Y_k\subseteq H_{\infty}(K)$ as follows. We write $P_j=[0,b_{1,j},\dots,b_{k-1,j},1,b_{k+1,j},\dots,b_{N,j}]$ for every $1\leq j\leq t$. We define
$$
Y_k=\{[0,b_1,\dots,b_{k-1},1,b_{k+1},\dots,b_N]\in H_{\infty}(K)|\ b_l\in\{b_{l,1},\dots,b_{l,t}\},\forall l\in\{1,\dots,k-1,k+1,\dots,N\}\}.
$$

\begin{claim}\label{3.14}
For every $k\in\{1,\dots,N\}$, there exist an $S$-constant $(c_{k,v})_{v\in S}$ and a number $M_k>0$ such that $\lambda_{H_{\infty}}(v,x)\leq M_k\sum\limits_{Q\in Y_k}\lambda_{Q}(v,x)+c_{k,v}$ holds for every $v\in S$ and every $x=[1,x_1,\dots,x_N]\in C(K)$ which satisfies $\|x_k\|_v=\max\limits_{1\leq i\leq N}\|x_i\|_v$.
\end{claim}

\begin{proof}
We shall only prove the claim for $k=1$ as the other cases are similar. Our goal is to find $M>0$ and an $S$-constant $(c_v)_{v\in S}$ such that $\lambda_{H_{\infty}}(v,x)\leq M\sum\limits_{Q\in Y_1}\lambda_{Q}^1(v,x)+c_{v}$ holds for every $v\in S$ and every $x=[1,x_1,\dots,x_N]\in C(K)$ which satisfies $\|x_1\|_v=\max\limits_{1\leq i\leq N}\|x_i\|_v$.

~
        
First we prove the case when $N=2$. In this case, we have $Y_1=\{P_1,\dots,P_t\}$. The curve $C\subseteq\mathbb{A}^2$ is defined by a single equation $g(x_1,x_2)=0$. Write $g(x_1,x_2)=g_0(x_1,x_2)+h(x_1,x_2)$, where $g_0$ is homogeneous of $\deg g$ and $\deg h<\deg g$.
        
Let $c_v=l_v(h)+\log ^+N(h)$ and $M=\max\limits_{1\leq j\leq t}i(P_j)$. We prove $\lambda_{H_\infty}(v,x)\leq M\sum\limits_{1\leq j \leq t} \lambda_{P_j}^1(v,x)+c_v$ for every $v\in S$ and every $x=[1,x_1,x_2]\in C(K)$ which satisfies $\|x_1\|_v\geq \|x_2\|_v$.
        
Write $P_j=[0,1,b_j]$ for short. Then we may write $g_0(x_1,x_2)=\prod\limits_{1\leq j\leq t}(x_2-b_jx_1)^{i(P_j)}$. We assume $\lambda_{H_\infty}(v,x)>0$ so that $\|x_1\|_v>1$. We further assume $\lambda_{H_\infty}(v,x)>\lambda_{P_j}^1(v,x)$ for $1\leq j\leq t$. Then $\lambda_{P_j}^1(v,x)=-\log^-\|\frac{x_2}{x_1}-b_j\|_v$. Since
$$
\prod\limits_{1\leq j\leq t}\|\frac{x_2}{x_1}-b_j\|_v^{i(P_j)}=\|\frac{1}{x_1}\|_v\cdot \|\frac{h(x_1,x_2)}{x_1^{\deg g-1}}\|_v,
$$
we have $\lambda_{H_\infty}(v,x)=\log \|x_1\|_v\leq M\sum\limits_{1\leq j\leq t}\lambda_{P_j}^1(v,x)+\log^+(N(h)H_v(h))$. This finishes the proof for $N=2$.

~

For the general case, we reduce to the case when $N=2$. For $2\leq i\leq N$, let $\pi_i:\mathbb{P}^N\dashrightarrow \mathbb{P}^2$ be the projection $\pi_i([x_0,\dots,x_N])=[x_0,x_1,x_i]$, and $L_{\infty}=\{x_0=0\}\subseteq\mathbb{P}^2$. By the assumption that every $P_j$ lies in $\{x_1\cdots x_N\neq 0\}$, we know $\overline{C}$ does not intersect the indeterminacy locus of each $\pi_i$. Now the morphism $\pi_i|_{\overline{C}}:\overline{C}\rightarrow\pi_i(\overline{C})$ is surjective. Also, a point in $\overline{C}$ maps into $L_\infty$ if and only if it lies in $H_\infty$. Hence $\pi_i(\overline{C})$ is an irreducible closed subcurve of $\mathbb{P}^2$ and $\pi_i(\overline{C})\cap L_{\infty}=\{\pi_i(P_j)|\ 1\leq j\leq t\}$. In particular, $\pi_i(\overline{C})\cap L_{\infty}$ contains only rational points. 
        
Thus we can apply the $N=2$ case. For every $2\leq i\leq N$, there exist a positive integer $M_i$ and an $S$-constant $(c_{i,v})_{v\in S}$ such that $\lambda_{L_{\infty}}(v,\pi_i(x))\leq M_i\sum\limits_{1\leq j \leq t}\lambda_{\pi_i(P_j)}^1(v,\pi_i(x))+c_{i,v}$ holds for every $v\in S$ and every $x\in C(K)$ which satisfies $\|x_1\|_v=\max\limits_{1\leq i\leq N}\|x_i\|_v$.

Let $c_v=\max\limits_{2\leq i\leq N}c_{i,v}$ and let $M=t\max\limits_{2\leq i\leq N}M_i$. In order to prove the expected statement, we may assume $\lambda_{H_\infty}(v,x)>c_v\geq0$. Then $\lambda_{H_\infty}(v,x)=\lambda_{L_\infty}(v,\pi_i(x))=\log\|x_1\|_v$ for every $2\leq i\leq N$.

For every $2\leq i\leq N$, we have
\begin{align*}
\log\|x_1\|_v&\leq M_i\sum\limits_{1\leq j \leq t}\lambda_{\pi_i(P_j)}^1(v,\pi_i(x))+c_{i,v}\\
&\leq M_it\max\limits_{1\leq j\leq t}\lambda_{\pi_i(P_j)}^1(v,\pi_i(x))+c_{i,v}\\
&\leq M_it\max\limits_{1\leq j\leq t}-\log^-\|\frac{x_i}{x_1}-b_{i,j}\|_v+c_{i,v}.
\end{align*}
Hence we have
$$
\max\limits_{1\leq j\leq t}-\log\|\frac{x_i}{x_1}-b_{i,j}\|_v\geq\frac{\log\|x_1\|_v-c_{i,v}}{M_it}
$$
for every $2\leq i\leq N$. Therefore, we get
$$
\max\limits_{Q\in Y_1}\lambda_{Q}^1(v,x)\geq\frac{\log\|x_1\|_v-c_v}{M}=\frac{\lambda_{H_\infty}(v,x)-c_v}{M}
$$
and finish the proof.
\end{proof}

Now we prove the proposition. Firstly, for every $1\leq k\leq N$, we may enlarge the $S$-constant in Claim \ref{3.14} and find $(c_{k,v}')_{v\in S}$ such that $\lambda_{H_{\infty}}(v,x)\leq M_k\sum\limits_{j=1}^t\lambda_{P_j}(v,x)+c_{k,v}'$ holds for every $v\in S$ and every $x=[1,x_1,\dots,x_N]\in C(K)$ which satisfies $\|x_k\|_v=\max\limits_{1\leq i\leq N}\|x_i\|_v$. We may also only do this for $k=1$. Indeed, Lemma \ref{3.10} guarantees that there is an $S$-constant $(c_{1,v}'')_{v\in S}$ such that $x\notin C(K)$ if $\lambda_{Q}(v,x)\geq c_{1,v}''$ for some $Q\in Y_1\setminus \{P_1,\dots,P_t\}$. Therefore, letting $c_{1,v}'=c_{1,v}+|Y_1|M_kc_{1,v}''$ suffices.
    
Next, we let $M=\max\limits_{1\leq k\leq N}M_k$ and $c_v'=\max\limits_{1\leq k\leq N}c_{k,v}'$. Then $\lambda_{H_{\infty}}(v,x)\leq M\sum\limits_{j=1}^t\lambda_{P_j}(v,x)+c_{v}'$ holds for every $v\in S$ and every $x=[1,x_1,\dots,x_N]\in C(K)$.

According to Lemma \ref{3.11}, for every $1\leq j\leq t$, there are $S$-constants $(c_{j,1,v})_{v\in S}$ and $(c_{j,2,v})_{v\in S}$ such that for every $v\in S$ and every $x\in C(K)$, we have $\lambda_{H_{\infty}}(v,x)\leq i(P_j)\lambda_{P_j}(v,x)+c_{j,2,v}$ when $\lambda_{P_j}(v,x)\geq c_{j,1,v}$. We let $c_v=\sum\limits_{j=1}^{t}\left(Mc_{j,1,v}+c_{j,2,v}\right)+c_v'$. Then one can verify that $\lambda_{H_{\infty}}(v,x)\leq\sum\limits_{j=1}^{t}i(P_j)\lambda_{P_j}(v,x)+c_v$ holds for every $v\in S$ and every $x\in C(K)$. Thus we finish the proof.
\end{proof}

Finally, we need the following statement proved by triangle inequality. Same as in Setting \ref{setting}, we let $O$ be the origin $(0,\dots,0)\in\mathbb{A}^N(K)$ and let $\pi:\mathbb{P}^N\setminus\{O\}\rightarrow H_{\infty}$ be the central projection. Also, please see Setting \ref{setting} for the definitions of the ``degree gap" of a regular endomorphism and the notation $f_{\infty}$.

\begin{proposition}\label{3.15}
Let $f$ be a regular endomorphism of $\mathbb{A}^N$ of algebraic degree $d$ with degree gap $k$. Let $P\in H_{\infty}(K)$ and let $X\subseteq\mathbb{A}^N(K)$ be a subset.

Let $r$ be a positive real number. Suppose there are $S$-constants $(c_{1,v})_{v\in S}$ and $(c_{2,v})_{v\in S}$ such that for every $v\in S$ and every $x\in X$, we have $f(x)\neq O$ and
$$
r\lambda_{H_{\infty}}(v,f(x))\leq\lambda_{f_{\infty}(P)}(v,\pi(f(x)))+c_{2,v}
$$
when $\lambda_{f_{\infty}(P)}(v,f(x))\geq c_{1,v}$.

Let $r'=\frac{\min\{k,dr\}}{e_{f_{\infty}}(P)}$ in which $e_{f_{\infty}}(P)$ denotes the multiplicity of $f_{\infty}$ at $P$. Then there exist $S$-constants $(c_{3,v})_{v\in S}$ and $(c_{4,v})_{v\in S}$ such that for every $v\in S$ and every $x\in X$, we have $x\neq O$ and
$$
r'\lambda_{H_{\infty}}(v,x)\leq\lambda_{P}(v,\pi(x))+c_{4,v}
$$
when $\lambda_{P}(v,x)\geq c_{3,v}$.
\end{proposition}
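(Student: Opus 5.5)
The plan is to compare three points of $H_{\infty}$: $\pi(f(x))$, $f_{\infty}(\pi(x))$, and $f_{\infty}(P)$. The triangle inequality for the chart distances defining $\lambda^i$ transfers the hypothesis from $\pi(f(x))$ to $f_{\infty}(\pi(x))$, and Lemma \ref{3.8}(ii), applied to $f_{\infty}$ on $H_{\infty}\cong\mathbb{P}^{N-1}$, then pulls the estimate back to $\pi(x)$ at the cost of the factor $e_{f_{\infty}}(P)$.

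\textbf{Step 1: two preliminary estimates.} Write $f=F+G$ with $F=(f_1,\dots,f_N)$ and $G=(g_1,\dots,g_N)$.

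First, I would establish the decomposition
$$\lambda_P(v,x)=\min\{\lambda_{H_{\infty}}(v,x),\ \lambda_P(v,\pi(x))\}$$
for $x\in\mathbb{A}^N(K)$ and $P\in H_{\infty}(K)$, up to an $S$-constant. With $P=[0,a_1,\dots,a_N]$ and $a_i\neq0$, this is a direct computation with $\lambda_P^i$: its arguments are $\|1/x_i\|_v$ and $\|x_j/x_i-a_j/a_i\|_v$, and Lemma \ref{3.3} lets me use a convenient index.

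Second, I need a \emph{uniform lower bound for $F$}: an $S$-constant $(b_v)$ with
$$\log\max_j\|f_j(x)\|_v\geq d\log\max_i\|x_i\|_v-b_v .$$
Since $f_1,\dots,f_N$ have no nonzero common zero, the Nullstellensatz gives an integer $D$ and homogeneous $A_{ij}$ over $K$ with $x_i^{D}=\sum_j A_{ij}f_j$. Estimating the right side termwise, with the constant built from $l_v(A_{ij})$ and $\log N(A_{ij})$ as in Remark \ref{3.2}, yields the bound. I also need the trivial upper bound $\log\max\|g_j(x)\|_v\leq(d-k)\lambda_{H_{\infty}}(v,x)+b'_v$ when $\max_i\|x_i\|_v\geq1$.

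\textbf{Step 2: triangle inequality.} Together these give
$$\lambda_{f_{\infty}(\pi(x))}\big(v,\pi(f(x))\big)\geq k\lambda_{H_{\infty}}(v,x)-c_v$$
in a suitable chart. This holds because $\pi(f(x))=[0,F(x)+G(x)]$ and $f_{\infty}(\pi(x))=[0,F(x)]$ differ, after dividing by the largest coordinate of $F(x)$, by a vector of size at most $\|G(x)\|_v/\|F(x)\|_v$.

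Next I prove a triangle inequality for the chart functions:
$$\lambda_Q(v,y)\geq\min\{\lambda_Q(v,z),\ \delta_v(y,z)\}-c_v,$$
where $\delta_v$ is the analogous chart distance and $c_v$ is at most $\log 2$ on archimedean places plus coefficient terms. This needs some care in choosing a common chart, using Lemma \ref{3.3}.

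Applying it with $Q=f_{\infty}(P)$, $y=f_{\infty}(\pi(x))$, $z=\pi(f(x))$, and using the hypothesis together with Lemma \ref{3.9} ($\lambda_{H_{\infty}}(v,f(x))\geq d\lambda_{H_{\infty}}(v,x)-c$), I get
$$\lambda_{f_{\infty}(P)}(v,f_{\infty}(\pi(x)))\geq\min\{dr,k\}\,\lambda_{H_{\infty}}(v,x)-c_v .$$
This is valid whenever $\lambda_{f_{\infty}(P)}(v,f(x))\geq c_{1,v}$.

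\textbf{Step 3: choosing the constants.} Choose $c_{3,v}$ large, so that $\lambda_P(v,x)\geq c_{3,v}$ has three consequences.
\begin{enumerate}
\item $x\neq O$ and $\lambda_{H_{\infty}}(v,x)$ is large; both follow from the decomposition in Step 1.
\item $\lambda_{f_{\infty}(P)}(v,f(x))\geq c_{1,v}$, by Lemma \ref{3.8}(i) applied to $f$ on $\mathbb{P}^N$, where $f(P)=f_{\infty}(P)$.
\item $\lambda_P(v,\pi(x))$ exceeds the threshold of Lemma \ref{3.8}(ii) for $f_{\infty}$; again this follows from the decomposition.
\end{enumerate}
Lemma \ref{3.8}(ii) then gives $\lambda_{f_{\infty}(P)}(v,f_{\infty}(\pi(x)))\leq e_{f_{\infty}}(P)\lambda_P(v,\pi(x))+c_v$. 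Combining this with Step 2 and dividing by $e_{f_{\infty}}(P)$ yields $r'\lambda_{H_{\infty}}(v,x)\leq\lambda_P(v,\pi(x))+c_{4,v}$. All constants are finite sums and maxima of $S$-constants, so they remain $S$-constants.

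The main obstacle I expect is Step 1's lower bound $\|F(x)\|_v\gtrsim\|x\|_v^d$ with an $L^1$-integrable constant uniformly over the archimedean places. The Nullstellensatz identity is what makes this effective, rather than a compactness argument.
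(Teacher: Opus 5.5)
Your proposal is correct and follows essentially the same route as the paper. The paper also compares $\pi(f(x))$, $f_{\infty}(\pi(x))$ and $f_{\infty}(P)$ in one fixed chart, after normalizing $P=f_{\infty}(P)=[0,1,0,\dots,0]$ by linear changes of coordinates. It uses Lemma~\ref{3.8}(i) to pass the threshold from $x$ to $f(x)$ and to $f_{\infty}(\pi(x))$, gets $\min\{k,dr\}\lambda_{H_{\infty}}(v,x)$ from the triangle inequality, and finishes with Lemma~\ref{3.8}(ii) for $f_{\infty}$. The only cosmetic difference is the perturbation bound: the paper measures it against the coordinate $x_1'=f_1(x)+g_1(x)$ and controls $\log\|x_1'\|_v=\lambda_{H_{\infty}}(v,f(x))$ via Lemma~\ref{3.9}, whereas you re-derive the Nullstellensatz lower bound for $F$ directly.
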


\begin{proof}
Notice that linear transformations of $\mathbb{A}^N$ commute with $\pi$ and do not change $d,k$ and $e_{f_{\infty}}(P)$. Lemma \ref{3.7} and Lemma \ref{3.9} guarantee that implementing such a transformation will not affect the conclusion. Therefore, we may assume that $f_{\infty}(P)=P=[0,1,0,\dots,0]$. Write
$$
f=[x_0^d,f_1(x_1,\dots,x_N)+x_0^kg_1(x_0,\dots,x_N),\dots,f_N(x_1,\dots,x_N)+x_0^kg_N(x_0,\dots,x_N)]
$$
and $x=[1,x_1,\dots,x_N]=(x_1,\dots,x_N)$. Write $f(x)=[1,x_1',\dots,x_N']$ for short.

We modify the $S$-constants $(c_{1,v})_{v\in S}$ and $(c_{2,v})_{v\in S}$ such that for every $v\in S$ and every $x\in X$, we have $f(x)\neq O$ and
$$
r\lambda_{H_{\infty}}(v,f(x))\leq\lambda_{f_{\infty}(P)}^1(v,\pi(f(x)))+c_{2,v}
$$
when $\lambda_{f_{\infty}(P)}^1(v,f(x))\geq c_{1,v}$.

Let $(c_{5,v})_{v\in S}$ be the $S$-constant as in Lemma \ref{3.8}(i) such that $\lambda_P^1(v,x)\leq \lambda_{f_\infty(P)}^1(v,f(x))+c_{5,v}$. Let $c_{6,v}=\sum\limits_{1\leq i\leq N} \left(l_v(g_i) +\log^+ N(g_i)\right)$ for $v\in S$. Let $(c_{7,v})_{v\in S}$ be the $S$-constant as in Lemma \ref{3.9} such that $\lambda_{H_\infty}(v,f(x))+c_{7,v}\geq d\lambda_{H_\infty}(v,x)$. 
    
Let $(c_{8,v})_{v\in S}$ and $(c_{9,v})_{v\in S}$ be the $S$-constants as in Lemma \ref{3.8}(ii) such that for every $v\in S$ and every $y\in H_{\infty}(K)$, we have
$$
e_{f_\infty}(P)\lambda_P^1(v,y)+c_{9,v}\geq \lambda_{f_\infty(P)}^1(v,f_\infty(y))
$$
when $\lambda_P^1(v,y)\geq c_{8,v}$.

Let $c_{3,v}=c_{1,v}+c_{5,v}+c_{8,v}+1$ and $c_{4,v}=\frac{1}{e_{f_\infty}(P)}\left(c_{2,v}+c_{6,v}+(1+r)c_{7,v}+c_{9,v}+2\log 2\right)$. We prove that for every $v\in S$ and every $x\in X$, we have $x\neq O$ and
$$
r'\lambda_{H_{\infty}}(v,x)\leq\lambda_{P}^1(v,\pi(x))+c_{4,v}
$$
when $\lambda_{P}^1(v,x)\geq c_{3,v}$.

Firstly, we have $\lambda_{H_\infty}(v,x)=\log\|x_1\|_v>0$ under this assumption as $c_{3,v}>0$. In particular $x\neq O$. Also, we have $\lambda_{f_\infty(P)}^1(v,f(x))\geq \lambda_P^1(v,x)-c_{5,v}\geq c_{3,v}-c_{5,v}>0$. Thus $\|x_1'\|_v>1$ and $\|\frac{x_i'}{x_1'}\|_v<1$ for $2\leq i\leq N$. Then we get $\lambda_{H_\infty}(v,f(x))=\log \|x_1'\|_v$.
    
Again, we have $\lambda_{f_\infty(P)}^1(v,f_\infty(\pi(x)))\geq \lambda_{P}^1(v,\pi(x))-c_{5,v}\geq \lambda_P^1(v,x)-c_{5,v}>0$. Hence, we have $f_1(x)\neq0$ and $\|\frac{f_i(x)}{f_1(x)}\|_v<1$ for $2\leq i\leq N$.

For $1\leq i\leq N$, we get $\|g_i(x)\|_v \leq N(g_i)H_v(g_i)\|x_1\|_v^{d-k}$ because $\lambda_{H_\infty}(v,x)=\log\|x_1\|_v>0$. Therefore, we have $\log \|g_i(x)\|_v\leq (d-k)\lambda_{H_\infty}(v,x)+c_{6,v}$.

Now we can estimate for $2\leq i\leq N$ that 
\begin{align*}
-\log \|\frac{x_i'}{x_1'}-\frac{f_i(x)}{f_1(x)}\|_v 
&=-\log \|\frac{f_i(x)+g_i(x)}{f_1(x)+g_1(x)}-\frac{f_i(x)}{f_1(x)}\|_v \\
&= \log \|x_1'\|_v -\log \| g_i(x) - g_1(x)\frac{f_i(x)}{f_1(x)}\|_v\\
&\geq \lambda_{H_\infty}(v,f(x)) -(d-k)\lambda_{H_\infty}(v,x) - c_{6,v} -\log 2 \\
&\geq k\lambda_{H_\infty}(v,x)-c_{6,v}-c_{7,v}-\log 2.
\end{align*}
Since $\lambda_{f_{\infty}(P)}^1(v,f(x))\geq\lambda_P^1(v,x)-c_{5,v}\geq c_{3,v}-c_{5,v}>c_{1,v}$, we also have
$$
-\log\max\limits_{2\leq i\leq N}\|\frac{x_i'} {x_1'}\|_v=\lambda_{f_{\infty}(P)}^1(v,\pi(f(x)))\geq r\lambda_{H_{\infty}}(v,f(x))-c_{2,v}\geq r(d\lambda_{H_\infty}(v,x)-c_{7,v})-c_{2,v}.
$$
Therefore, we get
\begin{align*}
\lambda_{f_{\infty}(P)}^1(v,f_{\infty}(\pi(x)))
&= -\log\max\limits_{2\leq i\leq N}\|\frac{f_i(x)}{f_1(x)}\|_v \\
&\geq \min\{-\log\max\limits_{2\leq i\leq N}\|\frac{x_i'}{x_1'}-\frac{f_i(x)}{f_1(x)}\|_v,-\log\max\limits_{2\leq i\leq N}\|\frac{x_i'} {x_1'}\|_v\} -\log 2 \\
&\geq \min\{k\lambda_{H_\infty}(v,x)-c_{6,v}-c_{7,v}-\log 2, dr\lambda_{H_\infty}(v,x)-c_{2,v}-rc_{7,v} \}-\log2 \\
&\geq \min\{k,dr\}\lambda_{H_\infty}(v,x) - (c_{2,v}+c_{6,v}+(1+r)c_{7,v}+2\log 2).
\end{align*}

Since $\lambda_{P}^1(v,\pi(x))\geq \lambda_P^1(v,x)\geq c_{3,v}>c_{8,v}$, we have
$$
e_{f_\infty}(P)\lambda_P^1(v,\pi(x))+c_{9,v}\geq \lambda_{f_{\infty}(P)}(v,f_{\infty}(\pi(x))).
$$
Then we have
$$
\lambda_P^1(v,\pi(x)) +c_{4,v}\geq \frac{1}{e_{f_\infty}(P)} (\lambda_{f_{\infty}(P)}^1(v,f_{\infty}(\pi(x)))-c_{9,v}) +c_{4,v}\geq \frac{1}{e_{f_\infty}(P)}\min \{k,dr\}\lambda_{H_\infty}(v,x)
$$
and finish the proof.
\end{proof}

We state the following result for future reference.

\begin{lemma}\label{3.16}
Let $P\in H_{\infty}(K)$. Then there exists an $S$-constant $(c_v)_{v\in S}$ such that $\lambda_{P}(v,x)\leq\lambda_{P}(v,\pi(x))+c_v$ holds for every $v\in S$ and every $x\in\mathbb{P}^N(K)\setminus\{O\}$.
\end{lemma}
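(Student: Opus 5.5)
Lemma \ref{3.16} asks for $\lambda_P(v,x)\leq\lambda_P(v,\pi(x))+c_v$ for $P\in H_\infty(K)$ and $x\neq O$. The plan is to reduce to a normalized position and then compare the two Weil functions coordinatewise, which is where the inequality becomes visible.

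\textbf{Reduction.} A linear transformation $A$ of $\mathbb{A}^N$ (acting on $x_1,\dots,x_N$ and fixing $x_0$) preserves $H_\infty$ and $O$, and satisfies $\pi\circ A=A\circ\pi$. By Lemma \ref{3.7}, $\lambda_P(v,x)$ and $\lambda_{A(P)}(v,A(x))$ bound each other by an $S$-constant, and likewise for the $\pi(x)$ side. So it suffices to treat $P=[0,1,0,\dots,0]$, and we may replace $\lambda_P$ by $\lambda_P^1$ at the cost of an $S$-constant.

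\textbf{Computation of $\lambda_P^1(v,x)$.} Write $x=[x_0,x_1,\dots,x_N]$, so that $\pi(x)=[0,x_1,\dots,x_N]$; here $x\neq O$ ensures that $(x_1,\dots,x_N)\neq0$.
\begin{enumerate}
\item If $x_1=0$, then both $\lambda_P^1(v,x)$ and $\lambda_P^1(v,\pi(x))$ are $0$, and there is nothing to prove.
\item If $x_1\neq0$, then
$$
\lambda_P^1(v,x)=-\log^-\max\left\{\|x_0/x_1\|_v,\max_{2\leq j\leq N}\|x_j/x_1\|_v\right\}.
$$
\end{enumerate}

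\textbf{Computation of $\lambda_P^1(v,\pi(x))$.} In case (ii),
$$
\lambda_P^1(v,\pi(x))=-\log^-\max_{2\leq j\leq N}\|x_j/x_1\|_v.
$$
This maximum is taken over a subset of the quantities appearing in the maximum for $\lambda_P^1(v,x)$. Since $-\log^-$ is nonincreasing, this gives
$$
\lambda_P^1(v,x)\leq\lambda_P^1(v,\pi(x)).
$$
In the normalized coordinates this already holds with constant $0$.

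\textbf{Conclusion.} Combining the constants from the reduction step (the Lemma \ref{3.7} constants for $x$ and for $\pi(x)$, plus the comparison of $\lambda_P$ with $\lambda_P^1$) yields an $S$-constant $(c_v)_{v\in S}$ with the required inequality, by Remark \ref{3.2}.

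\textbf{Main obstacle.} The only point needing care is that the reduction step really commutes with $\pi$ and keeps $O$ fixed. This holds because $A$ is linear on $\mathbb{A}^N$ and fixes the coordinate $x_0$.
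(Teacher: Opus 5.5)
Your proof is correct and is the same approach the paper has in mind. The paper omits the proof, remarking only that it amounts to ``the hypotenuse of a right-angled triangle is longer than its legs''; your observation formalizes exactly this: after normalizing to $P=[0,1,0,\dots,0]$, the maximum defining $\lambda_P^1(v,x)$ runs over a superset of the quantities defining $\lambda_P^1(v,\pi(x))$. Your reduction is also sound: a linear change of coordinates on $\mathbb{A}^N$ commutes with $\pi$ and fixes $O$, and Lemma \ref{3.7} can be applied in both directions. This is how the paper handles its own normalizations, for instance in the proof of Proposition \ref{3.15}.
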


\section{Proof of main results}

In this section, we prove Theorem \ref{main} and its consequences stated in Introduction.

\subsection{Proof of Theorem \ref{main}}

We prove Theorem \ref{main} in this subsection. We follow the 3 steps listed at the end of Introduction.

We use the notations in Setting \ref{setting}. In all the statements of this subsection, we implicitly assume that the regular endomorphism \emph{has degree} $d\geq2$ for simplicity. The case that $d=1$ is easy. It only appears in part (i) of Theorem \ref{main} and is considered in the proof of Theorem \ref{main}(i).

\begin{lemma}\label{4.1}
Let $K$ be an arithmetic function field. Let $f$ be a regular endomorphism of $\mathbb{A}^N$ defined over $K$, and let $x\in\mathbb{A}^N(K)$ be a point. Let $C\subseteq\mathbb{A}_K^{N}$ be an irreducible closed subcurve such that $C$ has an infinite intersection with the orbit $\mathcal{O}_f(x)$. Then the degree of the zero cycle $[(\overline{C}\cap H_{\infty})_{\mathrm{red}}]$ is $1$ or $2$. Furthermore, all points in $\overline{C}\cap H_{\infty}$ are $f_{\infty}$-periodic.
\end{lemma}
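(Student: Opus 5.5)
The plan is the one outlined as Step 1 in the Introduction: combine Siegel's theorem \ref{siegel} with Northcott's property \ref{northcott}.

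\textbf{Siegel's theorem gives at most two points at infinity.} Let $R\subseteq K$ be the finitely generated $\mathbb{Z}$-algebra generated by the coordinates of $x$ and the coefficients of $f$. Then $f^n(x)\in R^N$ for all $n$. Since $C(K)\cap\mathcal{O}_f(x)$ is infinite, $C(K)\cap R^N$ is infinite. Theorem \ref{siegel} then shows that $(\overline{C}\cap H_{\infty})(\overline{K})$ has at most two points. It is nonempty because $C$ is an affine curve, so $\overline{C}$ is a projective curve that meets the hyperplane $H_\infty$. Hence the reduced zero cycle has degree $1$ or $2$, counting geometric points.

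\textbf{A backward chain of curves.} Let $n_1<n_2<\cdots$ be the return times, so $f^{n_i}(x)\in C$. Consider the preimage $f^{-1}(C)$; it is a finite union of curves because $f$ is finite on $\mathbb{A}^N$. Each point $f^{n_i-1}(x)$ with $n_i\geq1$ lies in $f^{-1}(C)$. By pigeonhole, one irreducible component $C_{-1}$ with $f(C_{-1})=C$ contains infinitely many orbit points. Iterating produces a chain $\cdots\to C_{-2}\to C_{-1}\to C_0=C$, where each $C_{-n}$ meets $\mathcal{O}_f(x)$ infinitely often. Possibly after a finite extension of $K$ (still an arithmetic function field), the first step applies to each $C_{-n}$, so each $I_{-n}:=\overline{C_{-n}}\cap H_\infty$ has $1$ or $2$ points. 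Since $f$ extends to $\mathbb{P}^N$, preserves $H_\infty$, and maps $\mathbb{A}^N$ to $\mathbb{A}^N$, we have $f(\overline{C_{-n-1}})=\overline{C_{-n}}$ and $f^{-1}(H_\infty)=H_\infty$. Therefore $f_\infty(I_{-n-1})=I_{-n}$.

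\textbf{Northcott gives periodicity.} This is the step I expect to be the main obstacle, namely showing that all the $I_{-n}$ lie in a set of bounded height and bounded degree. I would bound the degree of $\overline{C_{-n}}$ uniformly, or at least its points at infinity, via the following argument.
\begin{itemize}
\item The residue-field degree $[\kappa(P):K]$ of $P\in I_{-n}$ is at most $2$, since $I_{-n}$ is a $\mathrm{Gal}$-stable set of size at most $2$.
\item For heights, I would use the canonical height $\hat h_{f_\infty}$ on $H_\infty$. We have $\hat h(f_\infty(P))=d\,\hat h(P)$, and $f_\infty$ maps $I_{-n-1}$ onto $I_{-n}$. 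Hence every $P\in I_{-n}$ has a preimage chain $P=P_0\leftarrow P_{-1}\leftarrow P_{-2}\leftarrow\cdots$ with $P_{-m}\in I_{-n-m}$.
\item This alone does not bound heights; the height of $P_{-m}$ is $d^{-m}\hat h(P)$, which shrinks. So the heights of points in $\bigcup_n I_{-n}$ are bounded as soon as those of $I_0$ are: points in $I_{-n}$ are $f_\infty^n$-preimages of $I_0$, so $\hat h\leq d^{-n}\max_{I_0}\hat h$.
\end{itemize}
Together with degree at most $2$, Theorem \ref{northcott} shows that $\bigcup_n I_{-n}$ is finite; it uses comparability of $\hat h$ with $h_{\mathcal{O}(1)_{\mathrm{can}}}$ up to a bounded constant.

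Now take $P\in I_0$ and its backward chain $P_{-m}$ lying in this finite set. Some $P_{-a}=P_{-b}$ with $a<b$. Then $f_\infty^{\,b-a}$ fixes $P_{-a}$, so $P_{-a}$ is periodic, and hence $P=f_\infty^{a}(P_{-a})$ is periodic. In fact $\hat h(P_{-m})\to0$ already shows these points have canonical height $0$ and bounded degree; even a preperiodic point with a full backward chain inside a finite set is periodic. The case $d=1$ must be handled separately, presumably by noting that $f_\infty$ is then an automorphism and arguing directly.
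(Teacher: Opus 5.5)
Your proposal is correct and follows the paper's argument. You use Siegel's theorem to get one or two points at infinity. You then build a backward chain $C_{-n}$ of curves, each meeting the orbit infinitely often, with $f_{\infty}$ mapping $\overline{C_{-n-1}}\cap H_{\infty}$ onto $\overline{C_{-n}}\cap H_{\infty}$. Finally you apply Northcott to the backward $f_{\infty}$-chain, whose points have degree at most $2$ and bounded height. Your canonical-height bound $\hat h\le d^{-n}\max_{I_0}\hat h$ simply makes explicit the ``standard argument of Moriwaki's height theory'' that the paper invokes.

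On $d=1$: the lemma sits under the paper's standing assumption $d\geq2$ for that subsection. The paper handles $d=1$ separately in the proof of Theorem~\ref{main}(i), via the DML conjecture for automorphisms rather than by ``arguing directly''. No height argument works there, because heights along backward chains need not stay bounded when $d=1$.
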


\begin{proof}
The degree of $[(\overline{C}\cap H_{\infty})_{\mathrm{red}}]$ is just the number $|(\overline{C}\cap H_{\infty})(\overline{K})|$. Theorem \ref{siegel} guarantees that it is $1$ or $2$. In order to prove that the points in $\overline{C}\cap H_{\infty}$ are $f_{\infty}$-periodic, we consider a sequence of irreducible closed curves in $\mathbb{A}_K^N$
$$
\cdots\stackrel{f}\rightarrow C_{-n}\stackrel{f}\rightarrow\cdots\stackrel{f}\rightarrow C_{-1}\stackrel{f}\rightarrow C_0=C
$$
defined as follows.

Firstly, we can find an irreducible component $C_{-1}$ of $f^{-1}(C)$ which has an infinite intersection with the orbit $\mathcal{O}_f(x)$. Since $f$ is finite, we know $C_{-1}$ is a curve. Repeating this procedure, we can find a sequence of curves as above. From the construction, we see that each $C_{-n}$ contains infinitely many points in $\mathcal{O}_f(x)$.

We regard $f$ as an endomorphism of $\mathbb{P}_K^N$. Then we get finite maps $f|_{\overline{C_{-n-1}}}:\overline{C_{-n-1}}\rightarrow\overline{C_{-n}}$. As these maps are surjective, we see that $f_{\infty}$ surjectively maps $\overline{C_{-n-1}}\cap H_{\infty}$ onto $\overline{C_{-n}}\cap H_{\infty}$. Let $P_0$ be a point in $\overline{C}\cap H_{\infty}$. Then we can get a sequence of points $P_{-n}\in\overline{C_{-n}}\cap H_{\infty}$ which satisfies $f_{\infty}(P_{-n-1})=P_{-n}$ for every $n$. The first part of this lemma guarantees that $[\kappa(P_{-n}):K]\leq2$ for every $n$, where $\kappa(P_{-n})$ denotes the residue field of $P_{-n}\in H_{\infty}$. As we have assumed that the algebraic degree $d$ of $f$ is at least $2$, the map $f_{\infty}$ is a polarized endomorphism of $H_{\infty}$. Therefore, a standard argument of Moriwaki's height theory \cite{Mor00} deduces that $\{P_{-n}|\ n\geq0\}$ is a finite set, in which the key point is Northcott's finiteness property \ref{northcott}. Hence $P_0$ is $f_{\infty}$-periodic. This finishes the proof.
\end{proof}

\proof[Proof of Theorem \ref{main}(i)]
We find an arithmetic function field $K\subseteq\mathbb{C}$ such that all data are defined over $K$, and $I=\overline{C}\cap H_{\infty}$ contains only $K$-rational points. Then Lemma \ref{4.1} says that $1\leq|I|\leq2$.

If the degree $d=1$, i.e. $f$ is an affine automorphism, then it is well-known that the DML conjecture holds for $f$. For example, see \cite{BGT10}. Hence the curve $\overline{C}$ is $f$-periodic. This in particular implies that the closed points in $I$ are $f_{\infty}$-periodic.

If the degree $d\geq2$, then we use Lemma \ref{4.1} to conclude that the points in $I$ are $f_{\infty}$-periodic.
\endproof

Now we turn to the proof of Theorem \ref{main}(ii). We construct the following setting. Same as in the proof above, we let $K\subseteq\mathbb{C}$ be an arithmetic function field such that all data are defined over $K$ and $I=\overline{C}\cap H_{\infty}$ contains only $K$-rational points. As $1\leq|I|\leq2$, we may write $I=\{P_0,Q_0\}$ for some points $P_0,Q_0\in H_{\infty}(K)$. If $|I|=1$, then we read $P_0=Q_0$ as a matter of convention. Since $P_0$ and $Q_0$ are $f_{\infty}$-periodic, the set $I_0=\mathcal{O}_{f_{\infty}}(P_0)\cup\mathcal{O}_{f_{\infty}}(Q_0)$ is finite and contains only periodic points. For $n\geq0$, we let $P_{-n}$ (resp. $Q_{-n}$) be the unique element in $I_0$ such that $f_{\infty}^n(P_{-n})=P_0$ (resp. $f_{\infty}^n(Q_{-n})=Q_0$). Then for every $n$, we have $f_{\infty}(P_{-n-1})=P_{-n}$ and $f_{\infty}(Q_{-n-1})=Q_{-n}$. Indeed, the sequences $(P_{-n})_{n\geq0}$ and $(Q_{-n})_{n\geq0}$ are the backward orbits of $P_0$ and $Q_0$ that run along their periods, respectively.

As in the proof of Lemma \ref{4.1}, we consider the sequence of irreducible closed curves in $\mathbb{A}_K^N$
$$
\cdots\stackrel{f}\rightarrow C_{-n}\stackrel{f}\rightarrow\cdots\stackrel{f}\rightarrow C_{-1}\stackrel{f}\rightarrow C_0=C.
$$
The way of construction there guarantees that each $C_{-n}$ has an infinite intersection with the orbit $\mathcal{O}_f(x)$. Same as in the proof of Lemma \ref{4.1}, we have $f_{\infty}(\overline{C_{-n-1}}\cap H_{\infty})=\overline{C_{-n}}\cap H_{\infty}$ for every $n$. Also, Lemma \ref{4.1} says that each set $\overline{C_{-n}}\cap H_{\infty}$ consists of $1$ or $2$ $f_{\infty}$-periodic points.

\begin{lemma}\label{4.2}
We have $\overline{C_{-n}}\cap H_{\infty}=\{P_{-n},Q_{-n}\}$ for every $n$. In particular, each set $\overline{C_{-n}}\cap H_{\infty}$ contains only $K$-rational points.
\end{lemma}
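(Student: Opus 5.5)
We argue by induction on $n$. The case $n=0$ is the definition of $I=\{P_0,Q_0\}$. Suppose $\overline{C_{-n}}\cap H_{\infty}=\{P_{-n},Q_{-n}\}$. We know two facts about the next set $J=\overline{C_{-n-1}}\cap H_{\infty}$. First, $f_{\infty}(J)=\{P_{-n},Q_{-n}\}$. Second, by Lemma \ref{4.1} applied to $C_{-n-1}$ (which has infinite intersection with $\mathcal{O}_f(x)$), $J$ consists of one or two $f_{\infty}$-periodic points. Here we apply the lemma over $\overline{K}$ or a finite extension if needed, since it only concerns geometric points.

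The key observation is that a periodic point has exactly one periodic preimage under $f_{\infty}$. Indeed, if $R$ is periodic with $f_{\infty}(R)=P_{-n}$, then $R$ lies in the periodic cycle of $P_{-n}$. The restriction of $f_{\infty}$ to this finite cycle is a bijection, so $R$ is the unique element of the cycle mapping to $P_{-n}$. That element is $P_{-n-1}$: by construction $P_{-n-1}\in I_0$ lies in the cycle of $P_0$, which equals the cycle of $P_{-n}$, and $f_{\infty}(P_{-n-1})=P_{-n}$. The same argument applies to $Q_{-n}$.

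Now every point of $J$ is periodic and maps into $\{P_{-n},Q_{-n}\}$, so $J\subseteq\{P_{-n-1},Q_{-n-1}\}$. Conversely, surjectivity of $f_{\infty}\colon J\to\{P_{-n},Q_{-n}\}$ gives a point of $J$ mapping to $P_{-n}$. By uniqueness this point is $P_{-n-1}$, and likewise $Q_{-n-1}\in J$. Hence $J=\{P_{-n-1},Q_{-n-1}\}$. When $P_0=Q_0$ the convention $P_{-n}=Q_{-n}$ remains consistent.

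The rationality statement follows because $P_{-n},Q_{-n}\in I_0$, and $I_0$ is the union of the forward orbits of $P_0,Q_0\in H_{\infty}(K)$ under an endomorphism defined over $K$. The only subtle point is the uniqueness of periodic preimages inside the cycle. This is elementary, so I expect no real obstacle; one must only take care that Lemma \ref{4.1} is applied to geometric points, which its statement about the degree of the reduced zero cycle permits.
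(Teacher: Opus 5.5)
Your proof is correct and is essentially the paper's argument: induction on $n$, using that the points of $\overline{C_{-n-1}}\cap H_{\infty}$ are $f_{\infty}$-periodic and map into $\{P_{-n},Q_{-n}\}$ to get the inclusion $\overline{C_{-n-1}}\cap H_{\infty}\subseteq\{P_{-n-1},Q_{-n-1}\}$, then surjectivity of $f_{\infty}$ onto $\overline{C_{-n}}\cap H_{\infty}$ to get equality. The only difference is that you spell out why a periodic preimage is unique, namely that $f_{\infty}$ is a bijection on a finite cycle; the paper leaves this step implicit.
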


\begin{proof}
We prove by induction on $n$. The case that $n=0$ is valid. Suppose we know $\overline{C_{-n}}\cap H_{\infty}=\{P_{-n},Q_{-n}\}$. As the points in $\overline{C_{-n-1}}\cap H_{\infty}$ are $f_{\infty}$-periodic and are sent to either $P_{-n}$ or $Q_{-n}$ by $f_{\infty}$, we see that $\overline{C_{-n-1}}\cap H_{\infty}\subseteq\{P_{-n-1},Q_{-n-1}\}$. The inclusion is indeed an equality because $f_{\infty}$ surjectively maps $\overline{C_{-n-1}}\cap H_{\infty}$ onto $\overline{C_{-n}}\cap H_{\infty}$. Thus we finish the proof by induction.
\end{proof}

Next, we show that the curves $C_{-n}$ become more and more vertical.

Consider the following two sequences of positive real numbers. Let $r_{1,0}$ (resp. $r_{2,0}$) be the reciprocal of the intersection number $i(P_0)$ (resp. $i(Q_0)$) of $C=C_0$ and $H_{\infty}$ at $P_0$ (resp. $Q_0$). For $n\geq1$, we let $r_{1,n}=\frac{\min\{k,dr_{1,n-1}\}}{e_{f_{\infty}}(P_{-n})}$ and $r_{2,n}=\frac{\min\{k,dr_{2,n-1}\}}{e_{f_{\infty}}(Q_{-n})}$. Hence we get $(r_{1,n})_{n\geq0},(r_{2,n})_{n\geq0}\in\mathbb{R}_{+}^{\mathbb{N}}$.

Recall $I_0=\mathcal{O}_{f_{\infty}}(P_0)\cup\mathcal{O}_{f_{\infty}}(Q_0)$ is a finite set. We denote $r=\min\limits_{P\in I_0}\frac{k}{e_{f_{\infty}}(P)}$.

\begin{lemma}\label{4.3}
We have $r>2$. Moreover, we have $r_{1,n}\geq r$ and $r_{2,n}\geq r$ for sufficiently large $n$.
\end{lemma}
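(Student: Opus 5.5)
The first assertion is immediate from the hypothesis of Theorem \ref{main}(ii): for every $P\in I_0$ we have $k>2e_{f_{\infty}}(P)$, that is, $\frac{k}{e_{f_{\infty}}(P)}>2$. Since $I_0$ is finite, the minimum $r=\min_{P\in I_0}\frac{k}{e_{f_{\infty}}(P)}$ is attained at some point and is therefore also $>2$.

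For the second assertion, I treat the sequence $(r_{1,n})$; the sequence $(r_{2,n})$ is handled identically with $Q_{-n}$ in place of $P_{-n}$. All $P_{-n}$ lie in $I_0$, so $e_{f_{\infty}}(P_{-n})\ge 1$ and $\frac{k}{e_{f_{\infty}}(P_{-n})}\geq r$ for every $n$. The recursion is
$$
r_{1,n}=\min\Bigl\{\frac{k}{e_{f_{\infty}}(P_{-n})},\frac{d\,r_{1,n-1}}{e_{f_{\infty}}(P_{-n})}\Bigr\}.
$$
The first term is always $\geq r$, so the issue is only to show that eventually the second term is also $\geq r$. Set $e=\max_{P\in I_0}e_{f_{\infty}}(P)$. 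The hypothesis gives $k>2e$, and since $k\leq d$ we get $d>2e$, so $d/e>2>1$.

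The key observation is that $r_{1,n}\ge\min\{r,(d/e)r_{1,n-1}\}$ for all $n\geq1$. Two cases follow.
\begin{itemize}
\item If $r_{1,n-1}\geq r$, then $(d/e)r_{1,n-1}\geq r$, hence $r_{1,n}\geq r$. So once the sequence reaches $r$ it stays at or above $r$.
\item Before the sequence reaches $r$, it satisfies $r_{1,n}\geq (d/e)\,r_{1,n-1}$. Iterating gives $r_{1,n}\geq(d/e)^n r_{1,0}$, which grows geometrically because $d/e>1$ and $r_{1,0}=1/i(P_0)>0$.
\end{itemize}
Hence after at most $n_0$ steps the sequence is $\geq r$, where $n_0$ is any integer with $(d/e)^{n_0}r_{1,0}\geq r$, and it remains $\ge r$ from then on.

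The only point needing care is the inequality $d>e$, which is what drives the geometric growth. It follows directly from $k\leq d$ (built into the definition of degree gap) together with $k>2e$. There is no real obstacle beyond this.
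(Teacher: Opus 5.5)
Your proof is correct and is essentially the paper's: both split on which term of the minimum is attained, getting geometric growth while $r_{1,n}<r$ and stability once $r_{1,n}\geq r$; the only cosmetic difference is that the paper uses the growth factor $\frac{d}{e_{f_{\infty}}(P_{-n})}\geq\frac{k}{e_{f_{\infty}}(P_{-n})}\geq r>2$, while you use $d/\max_{P\in I_0}e_{f_{\infty}}(P)$, which is at least $r$ anyway. The paper also notes, and you should too, that multiplicities at the $K$-rational points of $I_0$ are unchanged by base extension from $K$ to $\mathbb{C}$; this is what lets the hypothesis of Theorem~\ref{main}(ii), stated over $\mathbb{C}$, control the $e_{f_{\infty}}(P)$ appearing in $r$.
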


\begin{proof}
The assertion $r>2$ follows from our assumption. Notice that base extension does not change the multiplicities at rational points.

We have $d\geq k>2\max\limits_{P\in I_0}e_{f_{\infty}}(P)$. For every $n\geq1$, we have either $r_{1,n}\geq r$ or $r_{1,n}\geq rr_{1,n-1}$. Since $r_{1,0}>0$, we see that $r_{1,n}\geq r$ holds for sufficiently large $n$. The proof for $r_{2,n}$ is the same.
\end{proof}

We want to use the machinery of local heights established in Section 3. We fix a finite set $S_0\subseteq M_{K}^0$ such that the orbit $\mathcal{O}_f(x)$ contains only $S$-integral points in $\mathbb{A}^N(K)$, in which $S=M_{K}^{\infty}\sqcup S_0$. We use the notions and results in Section 3 towards this set $S$ of places.

\begin{lemma}\label{4.4}
There exist two sequences $((c_{P_{-n},1,v})_{v\in S})_{n\geq0},((c_{P_{-n},2,v})_{v\in S})_{n\geq0}$ of $S$-constants that satisfy the following property. For every $v\in S$ and every $x\in C_{-n}(K)$, we have $x\neq O$ and
$$
r_{1,n}\lambda_{H_{\infty}}(v,x)\leq\lambda_{P_{-n}}(v,\pi(x))+c_{P_{-n},2,v}
$$
when $\lambda_{P_{-n}}(v,x)\geq c_{P_{-n},1,v}$.

There also exist two sequences $((c_{Q_{-n},1,v})_{v\in S})_{n\geq0},((c_{Q_{-n},2,v})_{v\in S})_{n\geq0}$ of $S$-constants which satisfy the same statement after changing every $P_{-n}$ into $Q_{-n}$ and changing $r_{1,n}$ into $r_{2,n}$.
\end{lemma}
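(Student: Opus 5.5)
We argue by induction on $n$, treating only the $P$-sequence; the $Q$-sequence is handled identically. The base case $n=0$ comes from Lemma~\ref{3.11} and Lemma~\ref{3.16}. The inductive step is a direct application of Proposition~\ref{3.15} with $X=C_{-n-1}(K)$.

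\textbf{Base case $n=0$.} Lemma~\ref{3.11} applied to $C=C_0$ and $P_0$ gives $S$-constants $(c'_{1,v})_{v\in S}$ and $(c'_{2,v})_{v\in S}$ with
\[
\lambda_{H_\infty}(v,x)\leq i(P_0)\lambda_{P_0}(v,x)+c'_{2,v}
\]
whenever $x\in C(K)$ and $\lambda_{P_0}(v,x)\geq c'_{1,v}$. Lemma~\ref{3.16} gives $\lambda_{P_0}(v,x)\leq\lambda_{P_0}(v,\pi(x))+c''_v$ for $x\neq O$. To ensure $x\neq O$, note that $O\notin H_\infty$, so Lemma~\ref{3.10} applied to the closed set $\{O\}$ and the point $P_0$ yields an $S$-constant beyond which $x\neq O$. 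Take $c_{P_0,1,v}$ to be the maximum of $c'_{1,v}$ and this constant. Dividing by $i(P_0)=1/r_{1,0}$, we get
\[
r_{1,0}\lambda_{H_\infty}(v,x)\leq\lambda_{P_0}(v,\pi(x))+c''_v+r_{1,0}c'_{2,v},
\]
and we set $c_{P_0,2,v}=c''_v+r_{1,0}c'_{2,v}$. This is an $S$-constant by Remark~\ref{3.2}.

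\textbf{Inductive step.} Suppose the constants have been constructed for $n$. Put $X=C_{-n-1}(K)$ and use $f(X)\subseteq C_{-n}(K)$. Proposition~\ref{3.15} takes as input a point $P\in H_\infty(K)$ together with the hypothesis at $f_\infty(P)$. We take $P=P_{-n-1}$, which is $K$-rational by Lemma~\ref{4.2}, so that $f_\infty(P)=P_{-n}$.

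The hypothesis of Proposition~\ref{3.15} with ratio $r_{1,n}$ is exactly the inductive statement evaluated at $f(x)\in C_{-n}(K)$. The condition "$f(x)\neq O$" is required only when $\lambda_{P_{-n}}(v,f(x))\geq c_{P_{-n},1,v}$, and it is supplied by the inductive statement under that same condition. This matches how Proposition~\ref{3.15} actually uses the assumption: its proof invokes $f(x)\neq O$ only inside the regime $\lambda^1_{f_\infty(P)}(v,f(x))\geq c_{1,v}$. If a literal reading of the statement demands $f(x)\neq O$ unconditionally, one can instead discard the single point $x\in f^{-1}(O)\cap X$; there are finitely many such points, and removing them does not affect the argument.

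Proposition~\ref{3.15} then outputs $S$-constants $(c_{3,v})_{v\in S}$ and $(c_{4,v})_{v\in S}$ such that, for $x\in C_{-n-1}(K)$ with $\lambda_{P_{-n-1}}(v,x)\geq c_{3,v}$, we have $x\neq O$ and
\[
r'\lambda_{H_\infty}(v,x)\leq\lambda_{P_{-n-1}}(v,\pi(x))+c_{4,v},
\]
where
\[
r'=\frac{\min\{k,\,d\,r_{1,n}\}}{e_{f_\infty}(P_{-n-1})}=r_{1,n+1}.
\]
Setting $c_{P_{-n-1},1,v}=c_{3,v}$ and $c_{P_{-n-1},2,v}=c_{4,v}$ completes the induction.

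The only real subtlety is the bookkeeping in the base case, namely combining Lemma~\ref{3.11}, Lemma~\ref{3.16} and the separation from $O$ given by Lemma~\ref{3.10}, and then checking that the formal hypotheses of Proposition~\ref{3.15} match the inductive statement, in particular the condition $f(x)\neq O$ discussed above. The resulting constants depend on $n$, but the lemma asserts nothing uniform in $n$, so this is allowed.
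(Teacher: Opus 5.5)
Your proposal is correct and follows the paper's proof, with the details filled in: induction on $n$, the base case obtained by combining Lemmas~\ref{3.10}, \ref{3.11} and \ref{3.16}, and the inductive step given by Proposition~\ref{3.15} with $X=C_{-n-1}(K)$ and $P=P_{-n-1}$ (which is $K$-rational by Lemma~\ref{4.2}), where the output ratio is exactly $r_{1,n+1}$. The conditional reading of ``$x\neq O$'' that you adopt is the intended one, so your fallback is not needed; if you did delete the points of $f^{-1}(O)\cap X$, you would also have to enlarge $c_{3,v}$ via Lemma~\ref{3.10} so that the statement still covers those finitely many points.
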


\begin{proof}
We prove by induction on $n$. The case $n=0$ can be verified by combining Lemma \ref{3.10}, Lemma \ref{3.11} and Lemma \ref{3.16}. According to the definitions of $r_{1,n}$ and $r_{2,n}$, we see that the induction step is completed by using Proposition \ref{3.15} and letting the set $X$ in there be the curves $C_{-n}$. Thus we finish the proof by induction.
\end{proof}

The following proposition shows that once a curve looks sufficiently vertical at its infinity points, then it must be a vertical line. The proof uses Roth's theorem. Matsuzawa summarized a version of Roth's theorem in terms of Weil functions \cite[Theorem A.1]{Mat25}. Please see it to get a general feeling about this kind of argument. We need to be a little bit more careful due to the setting of arithmetic function fields.

\begin{proposition}\label{4.5}
Let $C\subseteq\mathbb{A}_K^N$ be an irreducible closed curve which contains infinitely many $S$-integral rational points. Let $\overline{C}$ be the closure of $C$ in $\mathbb{P}_K^N$ and assume that $\overline{C}\cap H_{\infty}=\{P_1,\dots,P_t\}$ contains only rational points. Let $r>2$ be a real number and let $(c_{1,v})_{v\in S}$ and $(c_{2,v})_{v\in S}$ be two $S$-constants. Suppose that for every $1\leq j\leq t$, every $v\in S$, and every $x\in C(K)$, we have $x\neq O$ and
$$
r\lambda_{H_{\infty}}(v,x)\leq\lambda_{P_j}(v,\pi(x))+c_{2,v}
$$
when $\lambda_{P_j}(v,x)\geq c_{1,v}$. Then $C$ is a vertical line.
\end{proposition}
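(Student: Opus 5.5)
The plan is to argue by contradiction via Roth's theorem (Theorem \ref{roth}), applied to a single rational coordinate function on $C$. Suppose $C$ is not a vertical line. Then $\pi|_{C}$ is nonconstant, so $\pi(\overline{C})$ is a curve in $H_{\infty}$. By Lemma \ref{3.7}, Lemma \ref{3.9} and the fact that linear maps of $\mathbb{A}^N$ commute with $\pi$, the hypotheses are unchanged under a general $K$-linear change of coordinates. I will choose coordinates with the following properties:
\begin{enumerate}
\item every $P_j$ has the form $[0,1,b_{2,j},\dots,b_{N,j}]$;
\item the values $a_j:=b_{2,j}$ are pairwise distinct for $1\le j\le t$;
\item the rational function $u=x_2/x_1$ is nonconstant on $C$.
\end{enumerate}
Such coordinates exist because a generic linear form separates the finitely many $P_j$ and is nonconstant on the curve $\pi(\overline{C})$. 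Since $u|_C$ is nonconstant, its fibres are finite. Hence the infinitely many $S$-integral points $x\in C(K)$ give infinitely many distinct values $u(x)\in K$, and $x_1\neq 0$ for all but finitely many of them.

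\textbf{Local comparison.} Fix such a point $x$ and a place $v\in S$.
\begin{enumerate}
\item Suppose $\lambda_{P_j}(v,x)\ge c_{1,v}$ for some $j$. The max defining $\lambda_{P_j}^1$ dominates the single coordinate $x_2/x_1$. Replacing $\lambda_{P_j}$ by $\lambda_{P_j}^1$ costs an $S$-constant, so the hypothesis gives
$$-\log^-\|u(x)-a_j\|_v\ \ge\ \lambda^1_{P_j}(v,\pi(x))\ \ge\ r\lambda_{H_\infty}(v,x)-c'_v.$$
\item Suppose instead $\lambda_{P_j}(v,x)<c_{1,v}$ for all $j$. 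Then Proposition \ref{3.13} gives $\lambda_{H_\infty}(v,x)\le \sum_j i(P_j)c_{1,v}+c_v$, which is an $S$-constant.
\end{enumerate}
Combining the two cases, for every $v\in S$,
$$\sum_{j=1}^t-\log^-\|u(x)-a_j\|_v\ \ge\ r\lambda_{H_\infty}(v,x)-c''_v$$
for a single $S$-constant $(c''_v)_{v\in S}$, using that all terms on the left are nonnegative.

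\textbf{Integration.} Integrate this inequality over $S$. Because $x$ is $S$-integral, the places outside $S$ contribute nothing to $h_K(x)=h_K([1,x_1,\dots,x_N])$. Hence $\int_S\lambda_{H_\infty}(v,x)\,d\mu=h_K(x)$, and the left-hand integral is at least $r\,h_K(x)-C_0$ with $C_0=\int_S c''_v\,d\mu<\infty$. On the other side, $h_K(u(x))\le h_K([x_1,x_2])+O(1)\le h_K(x)+O(1)$, by monotonicity of $\log\max$ under dropping coordinates. Now apply Theorem \ref{roth} with the distinct $a_j$ and $\varepsilon=(r-2)/2$. For all but finitely many values $u(x)$ it gives
$$r\,h_K(x)-C_0\le (2+\varepsilon)(h_K(x)+O(1))+c.$$
Thus $(r-2-\varepsilon)h_K(x)$ is bounded on an infinite set of $K$-points. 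Northcott's property (Theorem \ref{northcott}) then forces this set to be finite, a contradiction. Therefore $C$ is a vertical line.

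\textbf{Main obstacle.} The delicate step is the pointwise comparison of the second paragraph made uniformly with genuine $S$-constants (measurable and $L^1$), so that the integrals make sense. This needs three ingredients:
\begin{enumerate}
\item controlling the cost of switching from $\lambda_{P_j}$ to $\lambda^1_{P_j}$ and to the single coordinate $u$, via Lemma \ref{3.3} and Remark \ref{3.2};
\item handling places where two of the $a_j$ might simultaneously be close to $u(x)$; this is harmless because all summands are nonnegative;
\item checking that the coordinate change preserves the form of the hypothesis, using Lemma \ref{3.16} and Lemma \ref{3.7}.
\end{enumerate}
I would try first to verify that every constant introduced is of the shape described in Remark \ref{3.2}, which automatically gives integrability.
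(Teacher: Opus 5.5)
Your argument is correct and is essentially the paper's proof. Your two-case local comparison is exactly Claim \ref{4.6} (the hypothesis combined with Proposition \ref{3.13}), and the rest follows the same chain: restrict to a coordinate ratio, integrate over $S$ using $S$-integrality, apply Roth's theorem with distinct centres, and conclude with Northcott.

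The differences are minor. You argue by contradiction with a single generic ratio $u$ that is nonconstant on $C$. The paper instead applies Roth to every $y_i=x_i/x_1$ to show that $\pi$ takes finitely many values on the integral points, and it treats the points with $x_1=0$ via Northcott rather than via finiteness of $C\cap\{x_1=0\}$.

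One point needs a small fix. A general $K$-linear change of coordinates need not preserve $S$-integrality, and your integration step uses it. Either choose the matrix with integer entries, as the paper does (your conditions (i)--(iii) are Zariski-open, so such a matrix exists), or absorb the finitely many places where its entries have poles into the bounded error term.
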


\begin{proof}
We can move the rational points $P_1,\dots,P_t$ into the affine chart $\{x_1\neq0\}$ of $\mathbb{P}_K^N$ by applying a $\mathbb{Z}$-coefficient linear transformation of $\mathbb{A}_K^N$ (we do not need its inverse to also has $\mathbb{Z}$-coefficients). Since the transformation has integer coefficients, the curve still has infinitely many $S$-integral points after the modification. Lemma \ref{3.7} and Lemma \ref{3.9} guarantee that the assumption is still valid. Also, the conclusion does not change. Hence we may assume $P_1,\dots,P_t\in\{x_1\neq0\}$ without loss of generality. For $1\leq j\leq t$, we write $P_j=[0,1,a_{j,2},\dots,a_{j,N}]$. Further applying a $\mathbb{Z}$-coefficient linear transformation, we may assume that those $t$ coefficients $a_{1,i},\dots,a_{t,i}$ are pairwise distinct for every $2\leq i\leq N$.

\begin{claim}\label{4.6}
There exists an $S$-constant $(c_v)_{v\in S}$ such that $r\lambda_{H_{\infty}}(v,x)\leq\sum\limits_{j=1}^t\lambda_{P_j}(v,\pi(x))+c_{v}$ holds for every $v\in S$ and every $x\in C(K)\setminus\{O\}$.
\end{claim}

\begin{proof}
Proposition \ref{3.13} says that there is an $S$-constant $(c_v')_{v\in S}$ such that
$$
\lambda_{H_{\infty}}(v,x)\leq\sum\limits_{j=1}^{t}i(P_j)\lambda_{P_j}(v,x)+c_v'
$$
holds for every $v\in S$ and every $x\in C(K)$. We claim that letting $c_v=rc_{1,v}\sum\limits_{j=1}^{t}i(P_j)+rc_v'+c_{2,v}$ suffices. Indeed, under such situation, we may assume $\lambda_{H_{\infty}}(v,x)\geq c_{1,v}\sum\limits_{j=1}^{t}i(P_j)+c_v'$ as otherwise the inequality trivially holds. Then there exists $j\in\{1,\dots,t\}$ such that $\lambda_{P_j}(v,x)\geq c_{1,v}$. Then the expected inequality is an immediate consequence of the assumption of Proposition \ref{4.5}.
\end{proof}

According to Claim \ref{4.6}, we may fix an $S$-constant $(c_v)_{v\in S}$ such that
$$
r\lambda_{H_{\infty}}(v,x)\leq\sum\limits_{j=1}^t\lambda_{P_j}^1(v,\pi(x))+c_{v}
$$
holds for every $v\in S$ and every $x\in C(K)\setminus\{O\}$. Let $x=[1,x_1,\dots,x_N]$ be an $S$-integral point in $C(K)\setminus\{O\}$. Denote $c=\int_{S}c_vd\mu(v)$, which is a nonnegative real number.
\begin{enumerate}
\item
If $x_1=0$, then we have $r\log^+\max\limits_{1\leq i\leq N}\|x_i\|_v\leq c_v$. Since $x$ is an $S$-integral point, we can integrate and get $rh_{K}(x)\leq c$ where $h_K(x)$ is the naive height function introduced in Definition \ref{2.1}. So such rational points $x$ have a bounded height. According to Northcott's finiteness property \ref{northcott}, there are only finitely many such $S$-integral points with $x_1=0$.

\item
If $x_1\neq0$, we denote $y_i=\frac{x_i}{x_1}$ for every $2\leq i\leq N$. We have $r\log^+\max\limits_{1\leq i\leq N}\|x_i\|_v\leq-\sum\limits_{j=1}^t\log^-\max\limits_{2\leq i\leq N}\|y_i-a_{j,i}\|_v+c_v$. In particular, for every $2\leq i\leq N$ we have
$$
r\log^+\max\limits_{1\leq i\leq N}\|x_i\|_v\leq-\sum\limits_{j=1}^t\log^-\|y_i-a_{j,i}\|_v+c_v.
$$

Taking integration as above, we get
$$
rh_K(x)\leq\int_{S}\left(\sum\limits_{j=1}^{t}-\log^-\|y_i-a_{j,i}\|_v\right)d\mu(v)+c.
$$
We have $h_K(x)=h_K([1,x_1,\dots,x_N])\geq h_K([x_1,\dots,x_N])=h_K([1,y_2,\dots,y_N])\geq h_K(y_i)$ for every $2\leq i\leq N$. Since $r>2$ and we have assumed $a_{1,i}\dots,a_{t,i}$ to be pairwise distinct, Roth's Theorem \ref{roth} guarantees that there are only finitely many possible values for each $y_i$. Hence in this case, the projection $\pi(x)$ of those $S$-integral points lie in a finite set.
\end{enumerate}

According to the discussion above, those infinitely many $S$-integral points in $C(K)\setminus\{O\}$ project to finitely many points in $H_{\infty}(K)$ under $\pi$. Assume there are infinitely many such points which project to $P_0\in H_{\infty}(K)$. Then $C\subseteq\mathbb{A}_K^N$ must be the vertical line passing through $P_0$. Thus we finish the proof.
\end{proof}

\begin{remark}
In fact, we have $1\leq t\leq 2$ under the assumption of Proposition \ref{4.5}. This can be seen by looking into the proof of Siegel's theorem \cite[Corollary 4.11]{Voj21}. We do not need this fact in the proof above.
\end{remark}

Now we can prove Theorem \ref{main}(ii).

\proof[Proof of Theorem \ref{main}(ii)]
We continue the previous discussion. Recall we have a sequence of irreducible closed curves in $\mathbb{A}_K^N$
$$
\cdots\stackrel{f}\rightarrow C_{-n}\stackrel{f}\rightarrow\cdots\stackrel{f}\rightarrow C_{-1}\stackrel{f}\rightarrow C_0=C,
$$
each of which has an infinite intersection with the orbit $\mathcal{O}_f(x)$. In particular, each of these curves contains infinitely many $S$-integral points.

Lemma \ref{4.2} says that $\overline{C_{-n}}\cap H_{\infty}=\{P_{-n},Q_{-n}\}$ contains only rational points. In the setting of Lemma \ref{4.4}, we denote $c_{n,1,v}=\max\{c_{P_{-n},1,v},c_{Q_{-n},1,v}\}$ and $c_{n,2,v}=\max\{c_{P_{-n},2,v},c_{Q_{-n},2,v}\}$. According to Lemma \ref{4.3} and Lemma \ref{4.4}, these two sequences of $S$-constants satisfy the following property: for every sufficiently large $n$, every $R\in\{P_{-n},Q_{-n}\}$, every $v\in S$, and every $x\in C_{-n}(K)$, we have $x\neq O$ and
$$
r\lambda_{H_{\infty}}(v,x)\leq\lambda_{R}(v,\pi(x))+c_{n,2,v}
$$
when $\lambda_{R}(v,x)\geq c_{n,1,v}$. Then we can use Proposition \ref{4.5} to deduce that every $C_{-n}$ is a vertical line for sufficiently large $n$. In particular, we must have $P_{-n}=Q_{-n}$ for sufficiently large, and hence every $n$.

By the definition of the sequence $(P_{-n})_{n\geq0}$, it is the backward orbit that runs along the period of $P_0$ under the action of $f_{\infty}$. We know $C_{-n}$ is the vertical line passing through $P_{-n}$ for sufficiently large $n$. Therefore, the curves $C_{-n}$ form a cycle under the action of $f$. In particular, every $C_{-n}$ is the vertical line passing through $P_{-n}$ and they are all $f$-periodic curves. Taking $n=0$, we see that $C$ is an $f$-periodic vertical line. Returning to the setting over $\mathbb{C}$, the same assertions remain valid and we finish the proof.
\endproof

\subsection{Proofs of Corollary \ref{cor main} and Theorem \ref{main2}}

\proof[Proof of Corollary \ref{cor main}]
If $d\geq3$, then this is a direct consequence of Theorem \ref{main}. Indeed, we have $k=3$ in this setting. Moreover, not only $I=\overline{C}\cap H_{\infty}$ but also $I_0=\bigcup\limits_{P\in I}\mathcal{O}_{f_{\infty}}(P)$ is contained in the open set $\{x_1\cdots x_N\}\neq0$. Thus $k=3>2=2e_{f_{\infty}}(P)$ holds for every $P\in I_0$ and then we can apply Theorem \ref{main} to get the result. If $d=2$, then $g_1(x_1,\dots,x_N)=\cdots=g_N(x_1,\dots,x_N)=0$ and we may consider the iteration $f^2$ and reduce to the case in which $d\geq3$.

The assertion about periodic curves is then an immediate consequence as there are non-preperiodic points in every curve.
\endproof

\proof[Proof of Theorem \ref{main2}]
This is also a direct consequence of Theorem \ref{main}. Notice Theorem \ref{main}(i) guarantees that we only need to control the multiplicities at the periodic points of $f_{\infty}$.
\endproof

\section{Further discussions: the positive characteristic case}

It is also interesting to study the dynamical Mordell--Lang conjecture over fields of positive characteristic. In this setting, we still concern about the return set of an orbit into a closed subvariety, but the conclusion needs to be modified. See, for example, \cite[Example 3.4.5.1]{BGT16}. Indeed, the examples in \cite[Section 5]{XY} show that the return set can be \emph{very} complicated, instead of a finite union of arithmetic progressions. Although such complicated return sets often appear for endomorphisms of zero entropy, there also exist polarized counterexamples in positive characteristic. See \cite[Example 5.4]{XY}. The interested reader may have a glance at \cite{CGSZ21,XY25,XY,XYb} and the references therein for recent progress on the dynamical Mordell--Lang problem in positive characteristic.

As we have mentioned in Introduction, the authors studied the dynamical Mordell--Lang conjecture in \cite{XY} by using a height argument. The results in loc. cit. are characteristic-free. Therefore, we would like to know about how much we can extend the results in this work to the setting of positive characteristic. Nevertheless, there is neither Siegel's theorem nor Roth's theorem in positive characteristic. Hence our approach should fail.

Our main result Theorem \ref{main2} can be divided into two parts. Firstly, we prove the DML(1) property under Condition \ref{conditon k}; and secondly, we characterize the periodic curves in this setting --- all of them are vertical lines. The following example shows that we cannot expect the second part in positive characteristic. Indeed, it shows that in the setting of Theorem \ref{main}, there are invariant curves which are not straight lines while the degree gap is arbitrarily large.

\begin{example}
Let $K$ be an algebraically closed field of characteristic $p>0$. Let $d$ be a positive integer prime to $p$ and let $q>d$ be a power of $p$. Consider the endomorphism
$$
f(x,y)=(x^q+(x^d-y^d-1)y^{q-d},y^q)
$$
of $\mathbb{A}^2$. It is a regular endomorphism with degree gap $d$.

Let $C$ be the irreducible affine curve $\{x^d=y^d+1\}$. Then $C$ is an $f$-invariant curve which has degree $d$. We can see that $\overline{C}\cap H_{\infty}\subseteq\{xy\neq0\}$. Hence $f_{\infty}$ is unramified at the points in $\overline{C}\cap H_{\infty}$. This shows that neither of the two parts of Theorem \ref{main} is valid in positive characteristic.
\end{example}

This example illustrates that not only Roth's theorem, but also Siegel's theorem play an important role in our approach.

However, the DML part of Theorem \ref{main2} may remain valid in positive characteristic. As \cite[Example 5.4]{XY} suggests, for regular endomorphisms in positive characteristic, it seems that $f_{\infty}$ should have some purely inseparable components in order to let the return sets be more complicated than finite unions of arithmetic progressions. But then the ramification status of $f_{\infty}$ is formidable. These thoughts propel us to ask the following question.

\begin{question}
Let $f$ be a regular endomorphism of $\mathbb{A}^N$ over an algebraically closed field $K$ of positive characteristic. We use the notations in Setting \ref{setting}.
\begin{enumerate}
\item
Let $C\subseteq\mathbb{A}^N$ be an irreducible closed subcurve which has an infinite intersection with an orbit of $f$. Do we have $\overline{C}\cap H_{\infty}\subseteq\mathrm{Per}(f_{\infty})$?

\item
Assume that the degree gap $k\geq\max\limits_{x\in \mathrm{Per}(f_{\infty})}e_{f_{\infty}}(x)$. Does $f$ satisfy the DML(1) property?
\end{enumerate}
\end{question}

\begin{remark}
\begin{enumerate}
\item
In characteristic 0, the answer of part (i) \emph{has to} be affirmative because this is a corollary of the DML conjecture. In positive characteristic, \cite[Example 5.4]{XY} shows that the DML(1) property can fail. It is somehow interesting that this corollary may remain valid.

\item
The bound of $k$ is optimal to the known counterexample \cite[Example 5.4]{XY}. Maybe more naturally, one can ask the algebraic degree $d>\max\limits_{x\in \mathrm{Per}(f_{\infty})}e_{f_{\infty}}(x)$ instead of making requirement for $k$. We propose part (ii) in this way in order to be more similar to Theorem \ref{main2}.

\item
If there are counterexamples toward part (i), then one has to consider $\max\limits_{x\in H_{\infty}(K)}e_{f_{\infty}}(x)$ in part (ii).
\end{enumerate}
\end{remark}

We expect affirmative answers. But as we have mentioned before, new methods are needed for the proof.

\section{Appendix: proofs of the lemmas in Section 3}

In this section, we provide proofs of the lemmas in Section 3.

\proof[Proof of Lemma \ref{3.3}]
Without loss of generality, we may assume $i=0$, $j=1$, and $a_0=1$. It suffices to prove that there is an $S$-constant $(c_v)_{v\in S}$ such that $\lambda_P^0(v,x)\leq \lambda_P^1(v,x)+c_v$ for every $v\in S$ and $x\in\mathbb{P}^N(K)$. We claim that taking
$$
c_v=\sum\limits_{i=1}^{N}\log^+\|a_i\|_v+2\log^+\|\frac{1}{a_1}\|_v+\log4
$$
suffices.

Let $x=[x_0,\dots,x_N]\in\mathbb{P}^N(K)$. In order to prove the inequality, we may assume $\lambda_P^0(v,x)\geq\log^+\|1/a_1\|_v+\log2$. As $\lambda_P^0(v,x)>0$, we may assume $x_0=1$ and then we have $\lambda_P^0(v,x)=-\log \max\limits_{1\leq i\leq N}\|a_i-x_i\|_v$. We get $\|a_1-x_1\|_v\leq \frac{\|a_1\|_v}{2}$. Hence $x_1\neq0$ and
$$
\lambda_P^1(v,x) = -\log^- \max\{ \|\frac{1}{a_1}-\frac{1}{x_1}\|_v, \|\frac{a_2}{a_1}-\frac{x_2}{x_1}\|_v,\cdots, \|\frac{a_N}{a_1}-\frac{x_N}{x_1}\|_v\}.
$$
    
We have $\|\frac{1}{a_1}-\frac{1}{x_1}\|_v = \|\frac{a_1-x_1}{a_1x_1}\|_v \leq \frac{2\|a_1-x_1\|_v}{\|a_1\|_v^2}$ and
\begin{align*}
\max_{2\leq i \leq N} \|\frac{a_i}{a_1}-\frac{x_i}{x_1}\|_v
&= \max_{2\leq i \leq N} \|\frac{a_ix_1-a_1x_i}{a_1x_1}\|_v
= \max_{2\leq i \leq N} \|\frac{(a_i-x_i)a_1-(a_1-x_1)a_i}{a_1x_1}\|_v \\
&\leq \frac{2\max\limits_{1\leq i \leq N} \|a_i-x_i\|_v \max\limits_{1\leq i \leq N}\|a_i\|_v}{\|a_1x_1\|_v} 
\leq \frac{4\max\limits_{1\leq i\leq N}\|a_i-x_i\|_v\max\limits_{1\leq i \leq N} \|a_i\|_v}{\|a_1\|_v^2}.
\end{align*}

Thus $\max \{ \|\frac{1}{a_1}-\frac{1}{x_1}\|_v, \max\limits_{2\leq i \leq N} \|\frac{a_i}{a_1}-\frac{x_i}{x_1}\|_v\} \leq \frac{4\max\limits_{1\leq i\leq N}\|a_i-x_i\|_v\max\{1,\max\limits_{1\leq i \leq N} \|a_i\|_v\}}{\|a_1\|_v^2}$ and then we have 
$$
\lambda_P^1(v,x) \geq \lambda_P^0(v,x) - \log^+ \max_{1\leq i\leq N}\|a_i\|_v - \log \frac{4}{\|a_1\|_v^2}.
$$
This finishes the proof.
\endproof

\proof[Proof of Lemma \ref{3.7}]
It suffices to prove $\lambda_P(v,x)\leq \lambda_{A(P)}(v,A(x))+c_v$ for some $S$-constant $(c_v)_{v\in S}$. Write $P=[a_0,\dots, a_N]$, $x=[x_0,\dots,x_N]$, and $A=(t_{ij})_{0\leq i,j\leq N}$. For $0\leq i\leq N$, we denote $a_i'=\sum\limits_{j=0}^{N}t_{ij}a_j$ and $x_i'=\sum\limits_{j=0}^{N}t_{ij}x_j$. Then $A(P)=[a_0',\dots,a_N']$ and $A(x)=[x_0',\dots,x_N']$. Assume $a_ia_j'\neq0$. We only deal with the case that $i=j=0$ as the proof of other cases are the same. Further assume $a_0=1$.
    
We prove that $\lambda_P^0(v,x)\leq \lambda_{A(P)}^0(v,A(x))+c_v$ in which
$$
c_v=\sum\limits_{0\leq j,k\leq N}\log^+\|t_{jk}\|_v+\sum\limits_{j=0}^{N}\log^+\|a_j'\|_v+2\log^+\|\frac{1}{a_0'}\|_v+\log(4N+4).
$$

We denote $C_v=\frac{\|a_0'\|_v}{2\sum\limits_{j=0}^N \|t_{0j}\|_v}$. We may assume that $\lambda_P^0(v,x)\geq c_v\geq -\log^-C_v+\log2>0$. Then we have $x_0\neq0$, and we may assume $x_0=1$. Then $\lambda_P^0(v,x)=-\log\max\limits_{1\leq j\leq N}\|a_j-x_j\|_v$ and $\|a_j-x_j\|_v \leq C_v$ for every $1\leq j\leq N$.
    
Now $\|a_0'-x_0'\|_v = \|\sum\limits_{j=0}^N t_{0j}(a_j-x_j)\|_v \leq \sum\limits_{j=0}^N \|t_{0j}\|_v C_v= \frac{1}{2}\|a_0'\|_v$. For $1\leq i\leq N$, we have
\begin{align*}
\|\frac{a_i'}{a_0'}-\frac{x_i'}{x_0'}\|_v
&= \|\frac{a_i'(x_0'-a_0')-a_0'(x_i'-a_i')}{a_0'x_0'}\|_v
= \|\frac{a_i'\sum\limits_{j=1}^N t_{0j}(x_j-a_j) - a_0'\sum\limits_{j=1}^N t_{ij}(x_j-a_j)}{a_0'x_0'}\|_v \\
&\leq \frac{2N\max\limits_{0\leq j,k\leq N}\|t_{jk}\|_v\max\limits_{0\leq j\leq N} \|a_j'\|_v \max\limits_{1\leq j\leq N}\|x_j-a_j\|_v}{\|a_0'\|_v^2/2}.
\end{align*}

Taking logarithm, we get
$$
\lambda_{A(P)}^0(v,A(x)) + \log\frac{4N\max\limits_{0\leq j,k\leq N}\|t_{jk}\|_v\max\limits_{0\leq j\leq N} \|a_j'\|_v}{\|a_0'\|_v^2}\geq \lambda_P^0(v,x)
$$
and thus finish the proof.
\endproof

In the following proofs, we use Notation \ref{3.12} for simplicity.

\begin{proof}[Proof of Lemma \ref{3.8}]
According to Lemma \ref{3.7}, we can composite $f$ with certain automorphisms and assume $P=f(P)=[1,0,\dots,0]$. Write $x=[x_0,\dots,x_N]$ and
$$
f(x)=[x_0^d+f_0(x_0,\dots,x_N), f_1(x_0,\dots,x_N),\dots,f_N(x_0,\dots,x_N)]=[x_0',\dots,x_N'],
$$
where $f_i(1,0,\dots,0)=0$ for $0\leq i\leq N$.

\begin{enumerate}
\item
We prove that $\lambda_P^0(v,x)\leq \lambda_{f(P)}^0(v,f(x))+c_v$, where
$$
c_v=\sum\limits_{i=0}^{N}\left(l_v(f_i)+\log^+N(f_i)\right)+\log2.
$$
If $x_0=0$, then $\lambda_P^0(v,x)=0$ and the assertion follows. So we can assume $x_0\neq0$, or equivalently $x_0=1$.

Denote $C_v=\min \{1,\frac{1}{2N(f_0)H_v(f_0)}\}$. We assume that $\lambda_P^0(v,x)\geq c_v\geq -\log^- C_v$. Then $\|x_i\|_v \leq C_v\leq1$ for every $1\leq i\leq N$. Since $\|f_i(x_0,\dots,x_N)\|_v\leq N(f_i)H_v(f_i)\max\limits_{1\leq i\leq N}\|x_i\|_v$, we have $\|x_0'\|_v = \|1+f_0(x_0,\dots,x_N)\|_v\geq 1-N(f_0)H_v(f_0)\max\limits_{1\leq i\leq N}\|x_i\|_v\geq \frac{1}{2}$. Thus,  
$$
\|\frac{x_i'}{x_0'}\|_v=\|\frac{f_i(x_0,\dots,x_N)}{x_0'}\|_v\leq 2N(f_i)H_v(f_i)\max\limits_{1\leq i\leq N} \|x_i\|_v
$$
for $1\leq i\leq N$.

Taking logarithm, we get
$$
\lambda_{f(P)}^0(v,f(x)) + \log \max\limits_{1\leq i\leq N} 2N(f_i)H_v(f_i)\geq \lambda_P^0(v,x)
$$
and hence finish the proof.
    
\item
For $1\leq i \leq N$, set $g_i(x_1,\dots, x_N)= f_i(1,x_1,\dots,x_N)$. Write $e=e_f(P)$ for simplicity. Then for $1\leq i,j\leq N$, there are $r_{ij}\in  K[x_1,\dots,x_N]$ and $s_{ij}\in (x_1,\dots,x_N)K[x_1,\dots,x_N]$ such that 
$$
x_i^{e} = \sum_{j=1}^N \frac{g_j(x_1,\dots,x_N)}{1+g_0(x_1,\dots,x_N)}\cdot\frac{r_{ij}(x_1,\dots,x_N)}{1+s_{ij}(x_1,\dots,x_N)}
$$
holds for every $1\leq i\leq N$.

Let $c_{1,v}=\sum\limits_{1\leq i,j\leq N}\left(l_v(s_{ij})+\log^+N(s_{ij})\right)+\log2$ and $c_{2,v}=\sum\limits_{1\leq i,j\leq N}\left(l_v(r_{ij})+\log^+N(r_{ij})\right)+\log(2N)$. We prove that when $\lambda_P^0(v,x)\geq c_{1,v}$, we have $\lambda_{f(P)}^0 (v,f(x))\leq e\lambda_P^0(v,x)+c_{2,v}$.

We may assume $x_0=1$. By the assumption that $\lambda_P^0(v,x)\geq c_{1,v}$, for each $1\leq i\leq N$, we have $\|x_i\|_v\leq \min\limits_{1\leq i,j\leq N} \frac{1}{2N(s_{ij})H_v(s_{ij})}$ and $\|x_i\|_v\leq 1$. So for every $1\leq i,j\leq N$, we have $\|r_{ij}(x_1,\dots, x_N)\|_v\leq N(r_{ij})H_v(r_{ij})$ and
$$
\|1+s_{ij}(x_1,\dots,x_N)\|_v\geq 1-N(s_{ij})H_v(s_{ij})\max\limits_{1\leq j\leq N}\|x_i\|_v \geq\frac{1}{2}.
$$

To prove the assertion, we may assume $x_0'=1$. We have $x_i'=\frac{g_i(x_1,\dots,x_N)}{1+g_0(x_1,\dots,x_N)}$.
Hence
$$
\|x_i\|_v^e\leq \sum\limits_{j=1}^N \|x_j'\|_v \|\frac{r_{ij}}{1+s_{ij}}\|_v \leq N \max\limits_{1\leq j\leq N}\|x_j'\|_v \max\limits_{1\leq i,j\leq N}2N(r_{ij})H_v(r_{ij})
$$
holds for every $1\leq i\leq N$. Taking logarithm, we get
$$
\lambda_{f(P)}^0(v,f(x)) \leq e\lambda_P^0(v,x) +\log^+ \left( 2N\max\limits_{1\leq i,j \leq N}N(r_{ij})H_v(r_{ij}) \right) 
$$
and finish the proof.
\end{enumerate}
\end{proof}

Lemma \ref{3.9} is about the Weil functions for divisors. We omit its proof because it is a direct consequence of \cite[Theorem 3.24]{Voj21}. The interested reader can also prove it by a standard calculation using Hilbert's Nullstellensatz.

\proof[Proof of Lemma \ref{3.10}]
According to Lemma \ref{3.7}, we may apply an automorphism on $\mathbb{P}^N$ and assume $P=[1,0,\dots,0]$. Then there exists a homogeneous polynomial $g$ in the ideal of $V$ such that $g(P)\neq 0$. We write $g(x_0,\dots,x_N)=x_0^d+g_0(x_0,\dots,x_N)$ in which $g_0(1,0,\dots,0)=0$. We prove $g(x)\neq 0$ when $\lambda_P^0(v,x)\geq c_v$, where $c_v=l_v(g_0)+\log^+N(g_0)+\log2$.

Write $x=[x_0,\dots,x_N]$. By the assumption that $\lambda_P^0(v,x)\geq c_v > 0$, we have $x_0\neq0$. So we assume $x_0=1$ and get $\|x_i\|_v\leq \frac{1}{2N(g_0)H_v(g_0)}$ for each $1\leq i\leq N$. Hence 
$$
\|g(x)\|_v \geq 1 - N(g_0)H_v(g_0) \max\limits_{1\leq i\leq N} \|x_i\|_v \geq \frac{1}{2}
$$
and we finish the proof.
\endproof

\proof[Proof of Lemma \ref{3.11}]
Applying a linear transformation, we may assume $P=[0,1,0,\dots,0]$ by taking Lemma \ref{3.7} and Lemma \ref{3.9} into account. Write $x=[1,x_1,\dots,x_N]$. Assume on the affine chart $\{x_1\neq 0\}$, $C$ is defined by polynomials $g_1,\dots,g_m\in K[x_0,x_2,\dots,x_N]$. Then for $2\leq k \leq N$, we have
$$
x_k^{i(P)} = x_0\frac{r_{k0}(x_0,x_2,\dots,x_N)}{1+s_{k0}(x_0,x_2,\dots,x_N)}+\sum_{j=1}^m g_j(x_0,x_2,\dots,x_N) \frac{r_{kj}(x_0,x_2,\dots,x_N)}{1+s_{kj}(x_0,x_2,\dots,x_N)}
$$
for some polynomials $r_{kj}\in K[x_0,x_2,\dots,x_N]$ and $s_{kj}\in(x_0,x_2,\dots,x_N)K[x_0,x_2,\dots,x_N]$.

Let $c_{1,v}=\sum\limits_{2\leq k\leq N, 0\leq j\leq m} \left(l_v(s_{kj})+\log^+ N(s_{kj})\right)+\log 2$ and $c_{2,v}=\sum\limits_{2\leq k\leq N} \left(l_v(r_{k0})+\log^+ N(r_{k0})\right)+\log 2$. We prove $\lambda_{H_\infty}(v,x)\leq i(P)\lambda_P^1(v,x)+c_{2,v}$ when $\lambda_P^1(v,x)\geq c_{1,v}$.

Since $\lambda_P^1(v,x)>0$, we know $x_1\neq 0$. Then we can write $x=[\frac{1}{x_1},1,\frac{x_2}{x_1},\dots,\frac{x_N}{x_1}]$. Further, we have $\|\frac{1}{x_1}\|_v\leq 1$ and $\|\frac{x_k}{x_1}\|_v\leq 1$ for $2\leq k\leq N$.  It follows that $\lambda_{H_\infty}(v,x)=\log^+ \max\limits_{1\leq j\leq N}\|x_j\|_v=\log\|x_1\|_v$ and $\|r_{k0}(\frac{1}{x_1},\frac{x_2}{x_1},\dots,\frac{x_N}{x_1})\|_v\leq N(r_{k0})H_v(r_{k0})$. By $\lambda_P^1(v,x)\geq c_{1,v}$, we have $\|\frac{1}{x_1}\|_v\leq \frac{1}{2N(s_{kj})H_v(s_{kj})}$ and $\|\frac{x_k}{x_1}\|_v\leq \frac{1}{2N(s_{kj})H_v(s_{kj})}$ for $2\leq k\leq N$ and $0\leq j\leq m$. Hence
$$
\|s_{kj}(\frac{1}{x_1},\frac{x_2}{x_1},\dots,\frac{x_N}{x_1})\|_v\leq N(s_{kj})H_v(s_{kj})\max\{\|\frac{1}{x_1}\|_v,\|\frac{x_2}{x_1}\|_v, \dots, \|\frac{x_N}{x_1}\|_v\}\leq \frac{1}{2}.
$$
    
Since $x\in C(K)$, we have $g_j(\frac{1}{x_1},\frac{x_2}{x_1},\dots,\frac{x_N}{x_1})=0$ for $1\leq j\leq m$. It follows that
$$
\|\frac{x_k}{x_1}\|_v^{i(P)}=\|\frac{1}{x_1}\cdot\frac{r_{k0}(\frac{1}{x_1},\frac{x_2}{x_1},\dots,\frac{x_N}{x_1})}{1+s_{k0}(\frac{1}{x_1},\frac{x_2}{x_1},\dots,\frac{x_N}{x_1})}\|_v\leq\|\frac{1}{x_1}\|_v \cdot 2N(r_{k0})H_v(r_{k0}).
$$
Taking logarithm, we get 
$$
\lambda_{H_\infty}(v,x) \leq i(P)\lambda_P^1(v,x)+\log^+ \left(2\max\limits_{2\leq k\leq N} N(r_{k0})H_v(r_{k0})\right)
$$
and finish the proof.
\endproof

We omit the proof of Lemma \ref{3.16} as it is fairly easy. It amounts to saying that the hypotenuse of a right-angled triangle is longer than its legs.

\section*{Acknowledgements}
We are grateful to our advisor, Junyi Xie, for introducing this topic to us and many beneficial discussions through the work. We would like to thank Guoquan Gao, Jiarui Song and Chengyuan Yang for some useful discussion.

This work is supported by the National Natural Science Foundation of China Grant No. 12271007.

\bibliographystyle{alpha}
\bibliography{reference}

\end{spacing}
\end{document}